\renewcommand{\arraystretch}{1.3}
\pgfplotsset{compat=1.15}
\definecolor{mygreen}{rgb}{0.2,0.7,0.2}
\definecolor{myblue}{rgb}{0,0.5,1}
\definecolor{mypink}{rgb}{0.7,0.1,0.7}
\definecolor{myorange}{rgb}{1,0.4,0.4}
\crefname{hypothesis}{Hypothesis}{Hypotheses}
\def\N{\mathbb{N}}
\def\R{\mathbb{R}}
\def\Z{\mathbb{Z}}
\def\Sp{\mathrm{Sp}}
\def\sp{\mathfrak{sp}}
\def\Gr{\mathrm{Gr}}
\def\St{\mathrm{St}}
\def\SpGr{\mathrm{SpGr}}
\def\SpSt{\mathrm{SpSt}}
\def\so{\mathfrak{so}}
\def\O{\mathrm{O}}
\def\GL{\mathrm{GL}}
\def\Sym{\mathrm{Sym}}
\def\D{\mathrm{d}}
\def\qgeo{\mathrm{qgeo}}
\def\Ret{\mathcal{R}}
\def\L{\mathcal{L}}
\DeclareMathOperator{\Exp}{Exp}
\DeclareMathOperator{\expm}{exp_m}
\DeclareMathOperator{\logm}{log_m}
\DeclareMathOperator{\sqrtm}{sqrt_m}
\DeclareMathOperator{\cay}{cay}
\DeclareMathOperator{\tr}{tr}
\DeclareMathOperator{\rank}{rank}
\DeclareMathOperator{\diag}{diag}
\DeclareMathOperator{\Hor}{\mathsf{Hor}}
\newcommand{\hor}{\mathsf{hor}}
\DeclareMathOperator{\Ver}{\mathsf{Ver}}
\DeclareMathOperator{\Ad}{Ad}
\DeclareMathOperator{\grad}{grad}
\DeclareMathOperator{\stab}{stab}
\DeclareMathOperator{\id}{id}
\newcommand{\SP}[1]{\left\langle #1\right\rangle}
\newcommand{\Space}[1]{\mathcal{#1}}
\definecolor{mygreen}{RGB}{47,130,100}
\title{The real symplectic Stiefel and Grassmann manifolds: metrics, geodesics and applications
}
\author{Thomas Bendokat\thanks{Department of Mathematics and Computer Science, University of Southern Denmark~(SDU), Odense, Denmark
  (\email{bendokat@imada.sdu.dk}, \email{zimmermann@imada.sdu.dk}).}
\and Ralf Zimmermann\footnotemark[2]}
\begin{document}

\maketitle

\begin{abstract}
    The real symplectic Stiefel manifold is the manifold of symplectic bases of symplectic subspaces of a fixed dimension. It features in a large variety of applications in physics and engineering.
    In this work, we study this manifold with the goal of providing theory and matrix-based numerical tools fit for basic data processing.
    Geodesics are fundamental for data processing. However, these are so far unknown.
    Pursuing a Lie group approach, we close this gap and derive 
    efficiently computable formulas for the geodesics both with respect to a 
    natural pseudo-Riemannian metric and a novel Riemannian metric.
    In addition, we provide efficiently computable and invertible retractions.
    Moreover, we introduce the real symplectic Grassmann manifold, i.e., the manifold of symplectic subspaces. Again, we derive efficient formulas for pseudo-Riemannian and Riemannian geodesics and invertible retractions. 
    The findings are illustrated by numerical experiments, where we consider optimization via gradient descent on both manifolds and compare the proposed methods with the state of the art.
    In particular, we treat the `nearest symplectic matrix' problem
    and the problem of optimal data representation via a low-rank symplectic subspace.
    The latter task is associated with the problem of
    finding a `proper symplectic decomposition', which is important in structure-preserving model order reduction of Hamiltonian systems.
\end{abstract}

\begin{keywords}
symplectic Stiefel manifold, symplectic Grassmann manifold, pseudo-Riemannian metric, Riemannian metric, geodesic, Riemannian optimization, Hamiltonian model order reduction,
proper symplectic decomposition, symplectic group
\end{keywords}

\begin{AMS}
22E70, 53-08, 53B20, 53B30, 53B50, 53Z05, 70G45
\end{AMS}

\section{Introduction}
%

The central object under study in this work is the \emph{real symplectic Stiefel manifold}.
The elements of this matrix manifold are the symplectic bases of fixed order $2k$ of symplectic subspaces in $\R^{2n}$,
\begin{equation*}
    \SpSt(2n,2k):=\left\lbrace U \in \R^{2n \times 2k}\ \middle|\ U^TJ_{2n}U=J_{2k} \right\rbrace,
    \quad  J_{2m} = \begin{bmatrix}
          0 & I_m\\
       -I_m & 0
      \end{bmatrix}, m\in \{n,k\}.
\end{equation*}

 Symplectic structures feature in a large variety of applications in physics and engineering, most prominently in Hamiltonian mechanics \cite{Arnold2001}. Hamiltonian systems are used in applications ranging from molecular dynamics to celestial mechanics, see \cite{Hairer06gni} and references therein. The symplectic Stiefel manifold is of special importance to 
 optimization problems of the form
 \begin{eqnarray}
 \label{eq:basicSympOpt}
 & \min_{U \in \R^{2n \times 2k}}\  & f(U)\\
 \nonumber
 &  \text{s.~t.}                  & U^TJ_{2n}U=J_{2k},
 \end{eqnarray}
 see \cite{GaoSonAbsilStykel2020symplecticoptimization} and references therein, since it allows to tackle such {\em constrained} optimization problems on $\R^{2n\times 2k}$ as {\em unconstrained} optimization problems on $\SpSt(2n,2k)$.
 Fields of applications include the symplectic eigenvalue problem \cite{BhatiaJain2015,SonAbsilGaoStykel2021symplecticeigenvalue} and
projection-based structure-preserving model order reduction for Hamiltonian systems.
Here, an optimization problem of the form
 \eqref{eq:basicSympOpt} appears as the central task of computing a so-called {\em proper symplectic decomposition} \cite{afkham2017structure, peng2016symplectic, buchfink2019symplectic}. 
 
 Riemannian optimization requires that we have explicit formulas for essential geometric quantities at hand, as well as efficient algorithms for practical computations.
 It is understood that the inner geometry of the symplectic Stiefel manifold depends on the chosen metric.
 
 \paragraph{Related work and state of the art}
 Optimization on the real symplectic group is considered in \cite{fiori2011} with respect to a pseudo-Riemannian metric and in \cite{Wang2018riemannian,birtea2020optimization} with respect to a left-invariant Riemannian metric. Quotients of the real symplectic group relating to the real symplectic Stiefel and Grassmann manifolds are briefly introduced in \cite[Subsection 2.1]{sedanomendoza2019}.
 To the best of the authors' knowledge, the first treatment of the real symplectic Stiefel manifold with a view on numerical applications is \cite{GaoSonAbsilStykel2020symplecticoptimization}. The optimization algorithm developed there forms the state of the art.
 The same team of authors compared this method with optimization with respect to a Riemannian metric that stems from a Euclidean metric in the later work \cite{GaoSonAbsilStykel2021symplecticEuclidean}. 
 \paragraph{Main original contributions}
 Starting from the classical real symplectic group, equipped with a bi-invariant pseudo-Riemannian metric,
 we use a Lie group approach to investigate the real symplectic Stiefel manifold.
 This original approach allows us to exploit quotient manifold results from semi-Riemannian geometry \cite{ONeill1983} 
 and enables us to derive the first closed-form expressions for the corresponding pseudo-Riemannian geodesics on $\SpSt(2n,2k)$. 
 Complementary to the pseudo-Riemannian approach, we also introduce a Riemannian metric and derive closed-form expressions for the corresponding Riemannian geodesics.
 In view of optimization tasks, we provide a formula for the gradient associated with the Riemannian metric and efficiently computable and invertible retractions that approximate the pseudo-Riemannian geodesics. 
 
 Moreover, we initiate a study of the manifold of symplectic subspaces that are spanned by symplectic bases, which we term the 
 {\em real symplectic Grassmann manifold} in analogy to the classical Stiefel and Grassmann manifolds. 
 Continuing the quotient manifold approach, we derive corresponding formulas for pseudo-Riemannian and Riemannian geodesics and retractions. 
 We promote symplectic subspaces as the main objects of interest in structure-preserving model order reduction of parameterized Hamiltonian systems.
 
 We illustrate the theoretical findings by means of numerical examples.
 More precisely, we investigate the numerical feasibility of the proposed methods,
 we tackle the nearest symplectic matrix problem on the real symplectic Stiefel manifold
 and compute the optimal symplectic subspace containing a given data matrix
 on the real symplectic Grassmann manifold. The latter problem is directly associated with finding a
 proper symplectic decomposition in the context of structure preserving model reduction.
 We juxtapose the methods' performance with the state of the art.
 \paragraph{Organisation}
 Section \ref{sec:SympGroup} reviews basic facts on the real symplectic group and states its geodesics associated with a natural bi-invariant pseudo-Riemannian metric and a right-invariant Riemannian metric.
 In Section \ref{sec:Stiefel} we investigate the real symplectic Stiefel manifold, where we cover basic geometry, Riemannian and pseudo-Riemannian metrics and their geodesics as well as the Riemannian gradient. 
 Section \ref{sec:Grassmann} introduces the real symplectic Grassmann manifold as a quotient space and provides formulas for the inherited metrics and geodesics and the Riemannian gradient.
 Suitable retractions fit for replacing the actual geodesics in efficient implementations are given in Section \ref{sec:Retractions}.
 Numerical experiments are contained in Section \ref{sec:NumericalTests} and Section \ref{sec:Conc} concludes on the paper.

\section{The real symplectic group}
\label{sec:SympGroup}
Symplectic vectors spaces
are the objects of interest for the (local) study of Hamiltonian systems. An introduction can be found in \cite{Arnold2001}. By definition, a \emph{real symplectic vector space} is a real vector space $\Space{V}$ together with a nondegenerate, skew-symmetric bilinear form $\omega \colon \Space{V} \times \Space{V} \to \R$. This means $\omega$ is bilinear and fulfills
\begin{enumerate}
 \item $\omega(x,y)=0$ for all $y \in \Space{V}$ implies $x=0$ (nondegenerate),
 \item $\omega(x,y)=-\omega(y,x)$ (skew-symmetric).
\end{enumerate}
As a standard result, such a $\Space{V}$ is even-dimensional. For any subspace $\Space{U} \subset \Space{V}$, the symplectic form $\omega$ allows to the define the \emph{symplectic complement}
\begin{equation*}
    \Space{U}^\perp := \left\lbrace v \in \Space{V} \ \middle|\ \omega(v,u)= 0\ \forall u \in \Space{U} \right\rbrace.
\end{equation*}
Since in general $\Space{U}^\perp \cap \Space{U} \neq \{0\}$, four special cases of subspaces are classified. A subspace $\Space{U}$ of $(\Space{V},\omega)$ is called
\begin{multicols}{2}
\begin{enumerate}
 \item \emph{isotropic}, if $\Space{U} \subset \Space{U}^\perp$,
 \item \emph{coisotropic}, if $\Space{U}^\perp \subset \Space{U}$,
 \item \emph{Langrangian}, if $\Space{U}^\perp = \Space{U}$, and
 \item \emph{symplectic}, if $\Space{U}^\perp \cap \Space{U} = \{0\}$.
\end{enumerate}
\end{multicols}
The last case, a symplectic subspace, means that $\omega$ restricts to a symplectic form on $\Space{U}$, i.e., $(\Space{U},\omega\vert_\Space{U})$ is a symplectic space in itself.

The \emph{linear Darboux theorem} \cite{Arnold2001} states that for any two symplectic vector spaces of the same dimension, there is a linear isomorphism between them preserving the symplectic form. We can therefore restrict our considerations to the \emph{standard symplectic vector space} $(\R^{2n},\omega_0)$, where the \emph{standard symplectic form} is defined as
\begin{equation*}
    \omega_0(x,y):=x^TJ_{2n}y \quad \text{with} \quad J_{2n}:= \begin{bmatrix} 0 & I_n\\ -I_n & 0 \end{bmatrix},
\end{equation*} 
where $n \in \N$ and $I_n$ is the $n \times n$ identity matrix.
Note that $J_{2n}^T = -J_{2n}=J_{2n}^{-1}$.

The real symplectic group is the matrix Lie group of transformations which leave the standard symplectic form invariant. It has been studied for example in \cite{fiori2011, Wang2018riemannian, birtea2020optimization} with a view on applications, and in \cite{Arnold2001} from a more abstract point of view.

Define for any matrix $A \in \R^{2n \times 2k}$ the \emph{symplectic inverse} \cite{peng2016symplectic}
\begin{equation*}
    A^+:= J_{2k}^TA^TJ_{2n}.
\end{equation*} 
The \emph{real symplectic group} is then defined as
\begin{equation*}
    \Sp(2n,\R) := \left\lbrace M \in \R^{2n \times 2n} \ \middle|\ M^+ M = I_{2n}\right\rbrace
    = \left\lbrace M \in \R^{2n \times 2n} \ \middle|\ M^TJ_{2n} M = J_{2n}\right\rbrace.
\end{equation*}
For any $M \in \Sp(2n,\R)$, and any $x,y \in \R^{2n}$, it holds that
\begin{equation*}
    \omega_0(x,y) = x^T J_{2n} y = x^T M^T J_{2n} M y = \omega_0(Mx,My).
\end{equation*} 
The corresponding Lie algebra is given by the \emph{Hamiltonian matrices}
\begin{equation*}
    \sp(2n,\R) :=\left\lbrace \Omega \in \R^{2n \times 2n} \ \middle|\ \Omega^+ = -\Omega \right\rbrace.
\end{equation*}
Accordingly, the tangent space of the real symplectic group at $M$ is given by translation by $M$, i.e.
\begin{equation*}
\begin{split}
    T_M\Sp(2n,\R) &= \left\lbrace M\Omega \in \R^{2n \times 2n} \ \middle|\ \Omega \in \sp(2n,\R) \right\rbrace\\
    &= \left\lbrace \Omega M \in \R^{2n \times 2n} \ \middle|\ \Omega \in \sp(2n,\R) \right\rbrace,
\end{split}
\end{equation*}
The dimension of the symplectic group is
$
    \dim\Sp(2n,\R) = (2n+1)n,
$
see also \cite{fiori2011}.

\subsection{Pseudo-Riemannian metric}

Similarly to \cite{fiori2011}, we define a bi-invariant pseudo-Riemannian metric on $\Sp(2n,\R)$ by $h_M\colon T_M\Sp(2n,\R) \times T_M\Sp(2n,\R) \to \R$,
\begin{equation}
    \label{eq:pRmetricSp}
    h_M(X_1,X_2):=\SP{X_1,X_2}_M := \frac12\tr(X_1^+X_2),\quad X_1, X_2 \in T_M\Sp(2n,\R).
\end{equation}
The factor $\frac12$ is introduced for convenience. 
If $X_1=M\Omega_1$ and $X_2=M\Omega_2$, then
\begin{equation*}
    \SP{X_1,X_2}_M=\frac12\tr(\Omega_1^+M^+M\Omega_2) = -\frac12\tr(\Omega_1\Omega_2),
\end{equation*}
i.e. $\SP{\cdot,\cdot}_M$ is exactly $-\frac12$ times the pseudo-Riemannian metric defined in \cite{fiori2011} and therefore $-\frac12$ times the Khvedelidze–Mladenov metric \cite{Khvedelidze2002} on the general linear group.
By properties of the trace and the symplectic inverse, it can be immediately verified that the pseudo-Riemannian metric defined in this way is bi-invariant. Therefore, making use of \cite[Proposition 11.9]{ONeill1983}, the (pseudo-Riemannian) geodesics are given by the one-parameter subgroups
\begin{equation*}
    \Exp^{\Sp,h}_M(tX):=M\expm(tM^+X) = M\expm(t\Omega),
\end{equation*}
where $X=M\Omega \in T_M\Sp(2n,\R)$ and $\expm$ denotes the matrix exponential. This corresponds to \cite[Theorem 2.4]{fiori2011}.

\subsection{Riemannian metric}
The pseudo-Riemannian metric \eqref{eq:pRmetricSp} is bi-in\-va\-riant, but is not positive definite. Especially for optimization problems, a (by definition positive definite) Riemannian metric can be advantageous, and there exists a vast amount of literature concerning Riemannian optimization. While a left-invariant Riemannian metric on $\Sp(2n,\R)$ was introduced in \cite{Wang2018riemannian}, we introduce a right-invariant Riemannian metric in anticipation of the quotient structure 
that is considered in the upcoming Section~\ref{sec:Stiefel}. We also derive the corresponding gradient and geodesics.

The mapping $g_M\colon T_M\Sp(2n,\R) \times T_M\Sp(2n,\R) \to \R$,
\begin{equation}
\label{eq:RiemMetricSp}
    g_M(X_1,X_2):=\frac12\tr((X_1M^+)^TX_2M^+),\quad X_1, X_2 \in T_M\Sp(2n,\R),
\end{equation}
defines point-wise a right-invariant Riemannian metric on the real symplectic group $\Sp(2n,\R)$. The right-invariance follows from the fact that for every $N \in \Sp(2n,\R)$, $g_{MN}(X_1N,X_2N) = \frac12\tr((X_1NN^+M^+)^TX_2NN^+M^+)=g_M(X_1,X_2)$.

The Riemannian gradient for this metric is given as follows: Let $f \colon \Sp(2n,\R) \to \R$ be differentiable and let $\nabla f_M$ be the Euclidean gradient of a continuous extension of $f$ to an open subset of $\R^{2n\times 2n}$ around $M \in \Sp(2n,\R)$, evaluated at $M$. Then the Riemannian gradient of $f$ at $M$ (with respect to the metric $g_M$) is
\begin{equation*}
    \grad^g_f(M)= \nabla f_M M^TM + J_{2n} M (\nabla f_M)^T J_{2n} M \in T_M\Sp(2n,\R).
\end{equation*}
This follows from the fact that by definition $\grad^g_f(M)$ is the unique tangent vector at $M$ such that $g_M(\grad^g_f(M),X) = \D f_M(X) = \tr((\nabla f_M)^T X)$ holds for all $X \in T_M\Sp(2n,\R)$. By making use of the fact that $M^+X = -X^+ M$, $\grad^g_f(M)$ solves this equation, and $\grad^g_f(M) \in T_M(\Sp(2n,\R))$ follows from $\grad^g_f(M)M^+ = - M(\grad^g_f(M))^+$.

We can derive the Riemannian geodesics corresponding to the Riemannian metric \eqref{eq:RiemMetricSp} analogously to \cite[Proposition 4.2]{Vandereycken2013}, where the Riemannian geodesics corresponding to a right-invariant metric on the general linear group $\GL(n)$ were derived.

\begin{proposition}
    Let $M \in \Sp(2n,\R)$ and $X \in T_M\Sp(2n,\R)$. The Riemannian geodesic $\gamma$ with $\gamma(0)=M$ and $\dot\gamma(0)=X$ for the Riemannian metric \eqref{eq:RiemMetricSp} is given by
    \begin{equation*}
     \gamma(t) := \Exp^{\Sp,g}_M(tX) := \expm(t(XM^+-(XM^+)^T))\expm(t(XM^+)^T)M.
    \end{equation*} 
\end{proposition}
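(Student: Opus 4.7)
The plan is to follow \cite[Proposition 4.2]{Vandereycken2013}, adapting the proof from $\GL(n)$ to the symplectic subgroup. Three items need to be verified: (i) $\gamma(t)\in\Sp(2n,\R)$ for all $t$; (ii) the initial conditions $\gamma(0)=M$, $\dot\gamma(0)=X$; and (iii) that $\gamma$ satisfies the geodesic equation associated with the right-invariant metric \eqref{eq:RiemMetricSp}.

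I would set $\Omega := XM^+\in\sp(2n,\R)$, $A := \Omega-\Omega^T$, and $B := \Omega^T$. A short calculation starting from $\Omega^+=-\Omega$ and $J_{2n}^2=-I_{2n}$ gives the identity $\Omega^T = J_{2n}\Omega J_{2n}$, which shows that $\Omega^T$ is again Hamiltonian. Hence $A,B\in\sp(2n,\R)$, their matrix exponentials are symplectic, and $\gamma(t)=\expm(tA)\expm(tB)M$ stays in $\Sp(2n,\R)$, settling (i). For (ii), at $t=0$ one immediately reads off $\gamma(0)=M$, and differentiation gives $\dot\gamma(0) = (A+B)M = \Omega M = X$.

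For (iii), I would exploit right-invariance: it suffices to show that $t\mapsto\expm(tA)\expm(tB)$ is a geodesic through the identity with initial velocity $\Omega$, since right-translation by $M$ is then an isometry carrying it to the desired geodesic through $M$. Working with the right-trivialized velocity $\xi(t):=\dot\gamma(t)\gamma(t)^+\in\sp(2n,\R)$, direct differentiation of the formula yields
\begin{equation*}
    \xi(t) = A + \expm(tA)\,B\,\expm(-tA), \qquad \dot\xi(t) = [A,\xi(t)].
\end{equation*}
Because $A$ is skew-symmetric, $A$ commutes with $\expm(tA)$ and transposition commutes with conjugation by $\expm(tA)$. Together with $B-B^T=-A$, this produces the conserved identity $\xi(t)-\xi(t)^T = A$. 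Substituting back yields the autonomous equation $\dot\xi = [\xi-\xi^T,\xi]$ with $\xi(0)=\Omega$, which is precisely the form of the Euler-Arnold equation for the right-invariant Frobenius metric derived in \cite{Vandereycken2013} for $\GL(n)$. Uniqueness for this ODE then identifies $\gamma$ with the geodesic.

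The main obstacle is justifying that Vandereycken's derivation transfers verbatim from $\GL(n)$ to the subgroup $\Sp(2n,\R)$ without any correction term accounting for the restriction of the metric. The crucial observation is that the Lie subalgebra $\sp(2n,\R)$ is invariant under matrix transposition (since $X\in\sp(2n,\R)$ implies $X^T = J_{2n}XJ_{2n}\in\sp(2n,\R)$). Consequently, the Arnold-type stress term $[\xi-\xi^T,\xi]$ already takes values in $\sp(2n,\R)$ with no need for an orthogonal projection onto the subalgebra, so the coadjoint expression appearing in the Euler-Arnold equation with respect to the Frobenius inner product has exactly the same form on $\sp(2n,\R)$ as on $\GL(n)$.
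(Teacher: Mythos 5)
Your proof is correct and takes essentially the same route as the paper, whose proof consists of transferring the $\GL(n)$ argument of Vandereycken et al.\ to $\Sp(2n,\R)$. You merely make explicit the detail the paper leaves implicit—namely that $\sp(2n,\R)$ is closed under transposition, so the coadjoint term $[\xi-\xi^T,\xi]$ stays in the subalgebra and no projection correction is needed—which is exactly why the transfer is ``straightforward''.
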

\begin{proof} The proof of \cite[Proposition 4.2]{Vandereycken2013} can be transferred straightforwardly to this setting.
\end{proof}

\section{The real symplectic Stiefel manifold}
\label{sec:Stiefel}

The \emph{real symplectic Stiefel manifold} is defined as
\begin{equation*}
    \SpSt(2n,2k):=\left\lbrace U \in \R^{2n \times 2k}\ \middle|\ U^+U=I_{2k} \right\rbrace 
    = \left\lbrace U \in \R^{2n \times 2k}\ \middle|\ U^TJ_{2n}U=J_{2k} \right\rbrace.
\end{equation*}
It contains the matrices $U \in \R^{2n\times 2k}$, whose column vectors form symplectic bases for the $2k$-dimensional symplectic subspaces of $(\R^{2n},\omega_0)$ and was treated in \cite{GaoSonAbsilStykel2020symplecticoptimization,GaoSonAbsilStykel2021symplecticEuclidean,SonAbsilGaoStykel2021symplecticeigenvalue}.
Note the formal similarity with the (compact) Stiefel manifold $\text{St}(n,k) = \left\lbrace U \in \R^{n \times k}\ \middle|\ U^TU=I_{k} \right\rbrace$.
As a novelty, and in contrast to the aforementioned references, we will pursue a Lie group-based approach to study the real symplectic Stiefel manifold. We will furthermore introduce a new pseudo-Riemannian and a new Riemannian metric and derive the geodesics for both.

Denote the projection onto the first $k$ columns of a matrix, when multiplied from the right, by
\begin{equation}
\label{eq:I_nk}
    I_{n,k}:= \begin{bmatrix} I_k \\ 0 \end{bmatrix} \in \R^{n \times k},
\end{equation}
and the projection onto the first $k$ and the $(n+1)$th to the $(n+k)$th column by
\begin{equation*}
    E:=\begin{bmatrix} I_{n,k} & 0 \\ 0 & I_{n,k} \end{bmatrix} \in \R^{2n \times 2k}.
\end{equation*}
Our first goal is to recognize the real symplectic Stiefel manifold as 
a quotient of the real symplectic group.
To this end, we introduce the following canonical projection:
\begin{equation}
\label{eq:projStiefel}
    \pi\colon \Sp(2n,\R) \to \SpSt(2n,2k),\ M \mapsto ME.
\end{equation}

\begin{proposition}
The real symplectic Stiefel manifold is diffeomorphic to the quotient
\begin{equation*}
    \SpSt(2n,2k)\cong \Sp(2n,\R)/\Sp(2(n-k),\R).
\end{equation*} 
It has dimension $\dim(\SpSt(2n,2k)) = (4n-2k+1)k$.
\end{proposition}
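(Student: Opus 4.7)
The plan is to realize $\SpSt(2n,2k)$ as a homogeneous space under the left action of $\Sp(2n,\R)$ by matrix multiplication, with the map $\pi$ from \eqref{eq:projStiefel} playing the role of the orbit map through the base point $E$.

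First, I would verify that left multiplication preserves the symplectic Stiefel condition since $(MU)^T J_{2n}(MU) = U^T J_{2n} U = J_{2k}$, and that the action is transitive. Given $U \in \SpSt(2n,2k)$, its columns span a $2k$-dimensional symplectic subspace of $(\R^{2n},\omega_0)$; the corresponding symplectic complement is symplectic of dimension $2(n-k)$, and the linear Darboux theorem supplies a symplectic basis on it. Concatenating the columns of $U$ with such a basis in the canonical order produces $M \in \Sp(2n,\R)$ satisfying $ME = U$, in particular showing that $\pi$ is surjective.

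Next, I would identify the stabilizer $H := \{M \in \Sp(2n,\R) : ME = E\}$. Decomposing $M$ into $4 \times 4$ blocks of sizes $k,\,n-k,\,k,\,n-k$, the equation $ME = E$ forces columns $1,\dots,k$ and $n{+}1,\dots,n{+}k$ of $M$ to coincide with the corresponding columns of $I_{2n}$. Substituting this partial form into $M^T J_{2n} M = J_{2n}$ and expanding block by block, the entries that would mix the fixed coordinates with the free ones are forced to vanish, while the $2(n-k) \times 2(n-k)$ matrix assembled from the remaining four non-trivial blocks turns out to satisfy exactly the symplectic defining relation at size $2(n-k)$. This yields a canonical isomorphism $H \cong \Sp(2(n-k),\R)$, realized as a block-diagonal subgroup.

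Because $H$ is a closed Lie subgroup of $\Sp(2n,\R)$, the homogeneous space theorem endows the coset space with the structure of a smooth manifold and upgrades the induced bijection $\Sp(2n,\R)/\Sp(2(n-k),\R) \to \SpSt(2n,2k)$, $[M]\mapsto ME$, to a diffeomorphism. The dimension formula then drops out as
\begin{equation*}
\dim\SpSt(2n,2k) \;=\; \dim\Sp(2n,\R) - \dim\Sp(2(n-k),\R) \;=\; (2n+1)n - (2(n-k)+1)(n-k) \;=\; (4n-2k+1)k.
\end{equation*}
The main obstacle is the stabilizer identification: the block-by-block expansion of $M^T J_{2n} M = J_{2n}$ must be carried out carefully enough to confirm that the symplectic constraint eliminates every off-diagonal coupling between the fixed $2k$-subspace and its complement, so that $H$ really is a block-diagonal copy of $\Sp(2(n-k),\R)$ rather than some larger semi-direct extension.
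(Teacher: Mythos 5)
Your proposal is correct and follows essentially the same route as the paper: realize $\SpSt(2n,2k)$ as the orbit of $E$ under left multiplication by $\Sp(2n,\R)$, identify the stabilizer of $E$ with $\Sp(2(n-k),\R)$, and invoke the standard homogeneous-space theorems to obtain the diffeomorphism and the dimension count $\dim\Sp(2n,\R)-\dim\Sp(2(n-k),\R)=(4n-2k+1)k$. The only difference is that you spell out the transitivity (via the symplectic complement and linear Darboux) and the block computation of the stabilizer, which the paper asserts without detail.
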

\begin{proof}
The set $\SpSt(2n,2k)$ is the orbit of $E$ under the group action of $\Sp(2n,\R)$ that is induced by left-multiplication. The stabilizer 
\begin{equation*}
    \stab_{E}:=\left\lbrace M \in \Sp(2n,\R) \ \middle|\ ME=E \right\rbrace
\end{equation*}
of this group action is isomorphic to $\Sp(2(n-k),\R)$. From \cite[Theorem 21.20]{Lee2012smooth}, it follows that $\SpSt(2n,2k)$ has a unique smooth manifold structure for which the group action is smooth. It furthermore follows that the dimension of the real symplectic Stiefel manifold is
\begin{equation*}
    \dim(\SpSt(2n,2k)) = \dim(\Sp(2n,\R))-\dim(\Sp(2(n-k),\R))= (4n-2k+1)k,
\end{equation*}
in accordance with \cite{GaoSonAbsilStykel2020symplecticoptimization}. From \cite[Theorem 21.18]{Lee2012smooth}, the existence of a diffeomorphism between $\SpSt(2n,2k)$ and $\Sp(2n,\R)/\Sp(2(n-k),\R)$ follows.
\end{proof}
 The quotient manifold structure of the real symplectic Stiefel manifold 
 (and of the real symplectic Grassmann manifold, which is to be discussed later on) with the symplectic group 
 as the associated total space is visualized in Figure~\ref{fig:QuotientStructure}.

\begin{figure}
\begin{center}
 \includegraphics{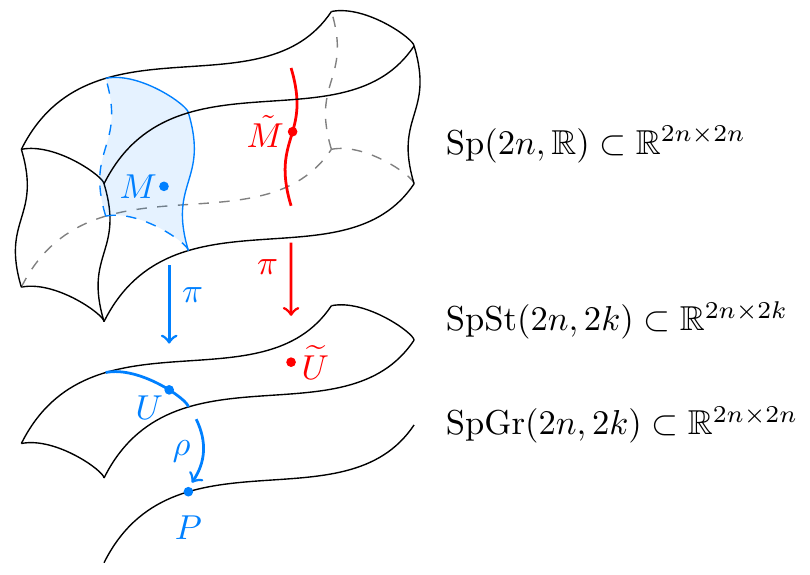}
\end{center}
\caption{Visualization of the quotient structure of the real symplectic Grassmann and Stiefel manifold with respect to the symplectic group. Any point $P \in \SpGr(2n,2k)$ has an equivalence class in $\SpSt(2n,2k)$ as its pre-image under $\rho$, visualized by the blue line through $U \in \SpSt(2n,2k)$. This equivalence class in turn has again an equivalence class of symplectic matrices in $\Sp(2n,\R)$ as its pre-image under $\pi$, visualized by the blue area around $M$. The equivalence class of a single point $\tilde U \in \SpSt(2n,2k)$ is of lower dimension, visualized by the red line through $\tilde M \in \Sp(2n,\R)$.}
\label{fig:QuotientStructure}
\end{figure}

The Lie group approach allows to represent tangent vectors in a similar way as is common for the standard Stiefel manifold $\St(n,k)$.
As the projection $\pi$ in \eqref{eq:projStiefel} is surjective, for every $U \in \SpSt(2n,2k)$, there is an $M \in \Sp(2n,\R)$ such that $U = ME$. 
Define a \emph{symplectic complement} of $U$ by $U^s:= ME^s$, where
\begin{equation*}
    E^s := \begin{bmatrix}
            0_{k\times(n-k)} & 0_{k\times(n-k)}\\
            I_{n-k} & 0_{n-k}\\
            0_{k\times(n-k)} & 0_{k\times(n-k)}\\
            0_{n-k} & I_{n-k}
           \end{bmatrix} \in \R^{2n \times 2(n-k)},
\end{equation*}
i.e. the projection onto the columns complementary to those selected by $E$. Note that $E^+=E^T$ and $(U^s)^+U^s = (E^s)^+M^+ME^s = I_{2(n-k)}$, i.e. $U^s \in \SpSt(2n,2(n-k))$. Furthermore  $U^+U^s = E^+M^+ME^s=0$ and $I_{2n}-UU^+ = M(I_{2n}-EE^+)M^+=U^s(U^s)^+$.

\begin{proposition}
    The tangent space at $U \in \SpSt(2n,2k)$ is given by
    \begin{equation}
    \label{eq:TangentSpaceStiefel}
    \begin{split}
        T_U\SpSt(2n,2k) &= \left\lbrace UA+U^sB \in \R^{2n\times 2k} \ \middle|\ A \in \sp(2k,\R), B \in \R^{2(n-k)\times 2k}\right\rbrace\\
        &=\left\lbrace \Delta \in \R^{2n\times 2k}\ \middle|\ U^+\Delta \in \sp(2k,\R)\right\rbrace.
    \end{split}
    \end{equation}
\end{proposition}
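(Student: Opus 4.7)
The plan is to establish (\ref{eq:TangentSpaceStiefel}) as a chain of two claims: first that the two sets on the right-hand side coincide, and then that this common set equals $T_U\SpSt(2n,2k)$. The dimension $(4n-2k+1)k$ of the Stiefel manifold, computed in the preceding proposition, together with the three relations $U^+U = I_{2k}$, $U^+U^s = 0$, and $I_{2n} = UU^+ + U^s(U^s)^+$ recorded in the text just above the proposition, will do most of the work.

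For the equality of the two right-hand-side sets, note that if $\Delta = UA + U^sB$ with $A \in \sp(2k,\R)$, then $U^+\Delta = A \in \sp(2k,\R)$ by the first two relations. Conversely, given $\Delta$ with $U^+\Delta \in \sp(2k,\R)$, multiplying the resolution of identity $I_{2n} = UU^+ + U^s(U^s)^+$ by $\Delta$ yields $\Delta = U(U^+\Delta) + U^s((U^s)^+\Delta)$, so setting $A := U^+\Delta$ and $B := (U^s)^+\Delta$ realises $\Delta$ in the parametrised form.

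For the inclusion $T_U \SpSt(2n,2k) \subseteq \{\Delta : U^+\Delta \in \sp(2k,\R)\}$, I would take a smooth curve $\gamma$ in $\SpSt(2n,2k)$ with $\gamma(0) = U$ and $\dot\gamma(0) = \Delta$, and differentiate $\gamma(t)^T J_{2n}\gamma(t) = J_{2k}$ at $t = 0$. The resulting identity $\Delta^T J_{2n} U + U^T J_{2n} \Delta = 0$ translates into $(U^+\Delta)^+ = -U^+\Delta$ after plugging in $U^+ = J_{2k}^T U^T J_{2n}$ and $(U^+)^T = J_{2n}^T U J_{2k}$, and collecting signs via $J_{2m}^T = -J_{2m}$ and $J_{2m}^2 = -I_{2m}$. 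Finally, the parametrisation $(A,B) \mapsto UA + U^sB$ is injective (one recovers $A$ and $B$ by applying $U^+$ and $(U^s)^+$ respectively), so its image has dimension $\dim\sp(2k,\R) + 4(n-k)k = (4n-2k+1)k = \dim T_U\SpSt(2n,2k)$, upgrading the inclusion to equality.

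The only delicate step is the sign bookkeeping that turns $\Delta^T J_{2n} U + U^T J_{2n}\Delta = 0$ into $(U^+\Delta)^+ = -U^+\Delta$; beyond that, the statement is a direct consequence of the quotient structure established earlier. If one prefers to avoid the dimension count in the last step, an alternative is to start from the quotient presentation itself: every tangent vector arises as $M\Omega E$ for some Hamiltonian $\Omega$, and the block decomposition of $\sp(2n,\R)$ with respect to the symplectic splitting $\R^{2n} = MEE^+M^{-1}\R^{2n} \oplus ME^s(E^s)^+M^{-1}\R^{2n}$ directly exhibits any prescribed pair $(A,B)$ as the $(U,U^s)$-components of such an $M\Omega E$.
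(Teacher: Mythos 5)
Your proposal is correct and follows essentially the same route as the paper's proof: differentiate the defining constraint along a curve through $U$ to obtain $(U^+\Delta)^+ = -U^+\Delta$, split $\Delta$ via $I_{2n} = UU^+ + U^s(U^s)^+$ into $UA + U^sB$, and close the argument with a dimension count against $\dim T_U\SpSt(2n,2k) = (4n-2k+1)k$. The only cosmetic difference is that you differentiate $\gamma^T J_{2n}\gamma = J_{2k}$ where the paper differentiates the equivalent identity $\gamma^+\gamma = I_{2k}$, and you make the equality of the two right-hand-side descriptions and the injectivity of $(A,B)\mapsto UA+U^sB$ explicit where the paper leaves them implicit.
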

    \begin{proof}
     For every tangent vector $\Delta \in T_U\SpSt(2n,2k)$, there is a curve $\gamma \colon (-\varepsilon,\varepsilon) \to \SpSt(2n,2k)$, with $\gamma(0)=U$ and $\dot\gamma(0)=\Delta$. Since $\gamma(t)^+\gamma(t)=I_{2k}$, differentiating and evaluating at $t=0$ leads to
     $
        U^+\Delta = -\Delta^+U.
     $\\
     Therefore $(U^+\Delta)^+ = \Delta^+U=-U^+\Delta$, and $A := U^+\Delta \in \sp(2k,\R)$. Furthermore,
     \begin{equation*}
        \Delta = UU^+\Delta + (I_{2n}-UU^+)\Delta = UA + U^s(U^s)^+\Delta.
     \end{equation*}
     Counting dimensions and defining $B := (U^s)^+\Delta$ yields the result.
    \end{proof}
Note that the tangent space parametrization \cite[(3.8b)]{GaoSonAbsilStykel2020symplecticoptimization} is similar to \eqref{eq:TangentSpaceStiefel}, but the chosen complement there is not necessarily a symplectic complement.

\subsection{Pseudo-Riemannian metric  on \texorpdfstring{$\SpSt(2n,2k)$}{SpSt(2n,2k)}}

According to our quotient Lie group approach, $\SpSt(2n,2k)$ inherits a pseudo-Riemannian metric from the pseudo-Riemannian metric \eqref{eq:pRmetricSp} on 
the total space $\Sp(2n,\R)$  in a natural way, by making use of horizontal lifts. 
A big advantage of this construction is that the corresponding geodesics can then be obtained via the projection of {\em horizontal geodesics}, i.e., geodesics with horizontal tangent vectors on $\Sp(2n,\R)$ \cite[Corollary 7.46]{ONeill1983}.

Splitting the Lie algebra $\sp(2n,\R)$ into a vertical and horizontal part with respect to the projection \eqref{eq:projStiefel} and the pseudo-Riemannian metric $h$ from \eqref{eq:pRmetricSp} gives
\begin{equation}
\label{eq:VerHorSt}
    \sp(2n,\R) = \Ver^\pi\sp(2n,\R) \oplus \Hor^{\pi,h}\sp(2n,\R),
\end{equation}
with {\em vertical space}
\begin{equation*}
    \Ver^\pi\sp(2n,\R):= \ker \D\pi_{E}=\left\lbrace
    \begin{bsmallmatrix}
    0 & 0 & 0 & 0\\
    0 & A & 0 & B\\
    0 & 0 & 0 & 0\\
    0 & C & 0 & -A^T
    \end{bsmallmatrix}
    \ \middle|\ 
    \begin{matrix}
        A \in \R^{(n-k)\times (n-k)}\\
        B,C \in \Sym_{n-k}
    \end{matrix}
    \right\rbrace,
\end{equation*}
and {\em horizontal space}
\begin{equation*}
\begin{split}
    \Hor^{\pi,h}\sp(2n,\R):=&(\Ver^\pi\sp(2n,\R))^{\perp,h} \subset \sp(2n,\R)\\
    =&\left\lbrace
    \begin{bsmallmatrix}
    A_1   & A_2^T   & B_1    & B_2^T    \\
    A_3   & 0     & B_2    & 0      \\
    C_1   & C_2^T & -A_1^T & -A_3^T \\
    C_2   & 0     & -A_2 & 0
    \end{bsmallmatrix}
     \ \middle|\ 
    \begin{matrix}
        A_1 \in \R^{k \times k}\\
        B_1,C_1 \in \Sym_k,\\
        A_2,A_3,B_2,C_2 \in \R^{(n-k) \times k}
    \end{matrix}
    \right\rbrace,
\end{split}
\end{equation*}
where the orthogonal complement is taken with respect to the pseudo-Riemannian metric $h$ of \eqref{eq:pRmetricSp}.
\begin{proposition}
\label{prop:HorOmegaStProperty}
    For any $\Omega \in \sp(2n,\R)$, it holds that $\Omega \in \Hor^{\pi,h}\sp(2n,\R)$ if and only if
    \begin{equation}
    \label{eq:HorOmegaStProperty}
    \begin{split}
        \Omega &= \left(I_{2n}-\frac12 EE^+\right)\Omega EE^+ - EE^+ \Omega^+ \left(I_{2n}-\frac12 EE^+\right)\\
        &=\Omega EE^+ + EE^+ \Omega - EE^+\Omega EE^+.
    \end{split}
    \end{equation}
\end{proposition}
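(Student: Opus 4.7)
The plan is to reduce the two-sided identity in \eqref{eq:HorOmegaStProperty} to a single block-projection condition, and then establish the equivalence by a short trace calculation against the vertical subspace. Writing $P := EE^+$, I would first check that the two displayed right-hand sides in \eqref{eq:HorOmegaStProperty} agree for any $\Omega \in \sp(2n,\R)$: expanding
\begin{equation*}
\left(I_{2n} - \tfrac12 P\right)\Omega P - P\,\Omega^+\left(I_{2n} - \tfrac12 P\right) = \Omega P - \tfrac12 P\Omega P - P\Omega^+ + \tfrac12 P\Omega^+ P
\end{equation*}
and substituting $\Omega^+ = -\Omega$ collapses this to $\Omega P + P\Omega - P\Omega P$, as required. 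I would then observe that the identity $\Omega = \Omega P + P\Omega - P\Omega P$ is equivalent to the single condition
\begin{equation*}
(I_{2n}-P)\,\Omega\,(I_{2n}-P) = 0,
\end{equation*}
so it suffices to prove that, for $\Omega \in \sp(2n,\R)$, one has $\Omega \in \Hor^{\pi,h}\sp(2n,\R)$ if and only if $(I_{2n}-EE^+)\Omega(I_{2n}-EE^+) = 0$.

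For the $(\Leftarrow)$ direction I would pair $\Omega$ against an arbitrary $\Omega_V \in \Ver^\pi\sp(2n,\R)$. From the block description of the vertical space, every such $\Omega_V$ is supported in the four blocks complementary to the image of $P$, hence satisfies $\Omega_V = (I_{2n}-P)\Omega_V(I_{2n}-P)$. By the cyclic property of the trace,
\begin{equation*}
h(\Omega,\Omega_V) = -\tfrac12 \tr(\Omega\Omega_V) = -\tfrac12 \tr\big((I_{2n}-P)\,\Omega\,(I_{2n}-P)\,\Omega_V\big) = 0
\end{equation*}
under the hypothesis, so $\Omega \perp_h \Ver^\pi\sp(2n,\R)$, i.e.\ $\Omega \in \Hor^{\pi,h}\sp(2n,\R)$.

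For the $(\Rightarrow)$ direction I would read off from the explicit parameterization of $\Hor^{\pi,h}\sp(2n,\R)$ displayed immediately before the proposition that the blocks in positions $(2,2)$, $(2,4)$, $(4,2)$, $(4,4)$ of any horizontal matrix vanish, and these are precisely the nonzero blocks of $(I_{2n}-EE^+)\Omega(I_{2n}-EE^+)$. The one place I expect to require the most care, and the step which is the main (though mild) obstacle, is that this forward direction tacitly uses the pre-computed block form of $\Hor^{\pi,h}\sp(2n,\R)$; if an intrinsic argument is preferred, one reproves it by testing orthogonality against the vertical vectors built from each admissible triple $(A,B,C)$ in turn and combining the four resulting trace identities with the Hamiltonian constraints $\Omega_{44} = -\Omega_{22}^T$, $\Omega_{24}=\Omega_{24}^T$, $\Omega_{42}=\Omega_{42}^T$ to conclude that all four vertical blocks of $\Omega$ vanish. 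This is a direct but slightly tedious block calculation.
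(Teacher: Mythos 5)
Your proposal is correct, and it is essentially the ``straightforward calculation'' that the paper's proof alludes to without spelling out: the reduction to $(I_{2n}-EE^+)\Omega(I_{2n}-EE^+)=0$, the trace pairing against the vertical blocks, and the use of $\Omega^+=-\Omega$ to reconcile the two displayed right-hand sides are all sound. Your caveat about the forward direction is well taken -- reading the conclusion off the displayed block form of $\Hor^{\pi,h}\sp(2n,\R)$ would lean on an assertion the proposition is meant to encode -- but the intrinsic alternative you sketch (testing against vertical matrices built from arbitrary $A$ and symmetric $B$, $C$, then invoking $\Omega_{44}=-\Omega_{22}^T$ and the symmetry of $\Omega_{24}$, $\Omega_{42}$) does close that loop correctly.
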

    \begin{proof}
        Follows by a straightforward calculation.
    \end{proof}
    Eventually, we will exploit abstract results of semi-Riemannian geometry \cite[\S 11]{ONeill1983} for determining the geo\-de\-sics.
    To enable this, we show next that $\SpSt(2n,2k)\cong \Sp(2n,\R)/\stab_E$ is 
    \emph{reductive}, and \emph{naturally reductive} with respect to the pseudo-Riemannian metric $h$ of \eqref{eq:pRmetricSp},
    see \cite[Definition 11.21 \& 11.23]{ONeill1983} for an explanation of these terms.
    \begin{lemma}
    \label{lemma:SpSt_naturally_reductive}
      The real symplectic Stiefel manifold $\SpSt(2n,2k)$ is \emph{reductive}.
      With respect to the pseudo-Riemannian metric $h$, it is \emph{naturally reductive}.
    \end{lemma}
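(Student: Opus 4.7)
The plan is to use the bi-invariance of $h$, which pushes both conditions (reductive and naturally reductive) down to standard facts about bi-invariant pseudo-Riemannian metrics on Lie groups modulo a closed subgroup. Throughout, set $G:=\Sp(2n,\R)$, $H:=\stab_E$, $\mathfrak{g}:=\sp(2n,\R)$, and $\mathfrak{h}:=\Ver^\pi\sp(2n,\R)$ (the Lie algebra of $H$), with $\mathfrak{m}:=\Hor^{\pi,h}\sp(2n,\R)$, so that \eqref{eq:VerHorSt} already gives $\mathfrak{g}=\mathfrak{h}\oplus\mathfrak{m}$.

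For the reductive part, I would show $\Ad(H)\mathfrak{m}\subset \mathfrak{m}$. First, I would note that $\mathfrak{h}$ is, by general Lie theory, invariant under $\Ad(H)$ (since it is the Lie algebra of a subgroup acted on by elements of that subgroup). Next, bi-invariance of the metric $h$ on $G$ translates at the algebra level to $\Ad$-invariance of the symmetric bilinear form $\langle X_1,X_2\rangle=\tfrac12\tr(X_1^+X_2)$ on $\mathfrak{g}$; indeed, this is immediate from cyclicity of the trace and the identity $(AB)^+=B^+A^+$. Combining these two facts, for any $H\in\stab_E$, $\Omega\in\mathfrak{m}$, and $V\in\mathfrak{h}$,
\begin{equation*}
\langle \Ad(H)\Omega, V\rangle = \langle \Omega, \Ad(H^{-1}) V\rangle = 0,
\end{equation*}
because $\Ad(H^{-1})V\in\mathfrak{h}$ and $\mathfrak{m}=\mathfrak{h}^{\perp,h}$ by construction. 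Hence $\Ad(H)\Omega\in\mathfrak{m}$, proving reductivity.

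For the naturally reductive part, I would apply the standard fact that a bi-invariant (pseudo-)metric satisfies the $\ad$-skew identity
\begin{equation*}
\langle [X,Y], Z\rangle = \langle X, [Y,Z]\rangle \qquad \text{for all } X,Y,Z\in\mathfrak{g}.
\end{equation*}
Given $X,Y,Z\in\mathfrak{m}$, writing $[X,Y]=[X,Y]_\mathfrak{m}+[X,Y]_\mathfrak{h}$ and $[Y,Z]=[Y,Z]_\mathfrak{m}+[Y,Z]_\mathfrak{h}$, the $\mathfrak{h}$-components pair to zero against $Z\in\mathfrak{m}$ and $X\in\mathfrak{m}$ respectively, so
\begin{equation*}
\langle [X,Y]_\mathfrak{m}, Z\rangle = \langle [X,Y], Z\rangle = \langle X, [Y,Z]\rangle = \langle X, [Y,Z]_\mathfrak{m}\rangle,
\end{equation*}
which is precisely the natural reductivity condition of \cite[Definition 11.23]{ONeill1983}.

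I do not expect a serious obstacle: the decomposition \eqref{eq:VerHorSt} is already in place, and the bi-invariance of $h$ has been established earlier. The only point that warrants care is verifying that $h|_{\mathfrak{h}}$ is non-degenerate, so that $\mathfrak{m}=\mathfrak{h}^{\perp,h}$ really is a complement of $\mathfrak{h}$ in $\mathfrak{g}$ (rather than overlapping it). This can be done by a short direct computation on the block form of $\Ver^\pi\sp(2n,\R)$, or simply invoked from \eqref{eq:VerHorSt}.
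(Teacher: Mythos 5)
Your proposal is correct, but it argues differently from the paper. The paper's proof is computational: for reductivity it uses the explicit matrix characterization of the horizontal space from Proposition~\ref{prop:HorOmegaStProperty} (namely $\Omega = \Omega EE^+ + EE^+\Omega - EE^+\Omega EE^+$) together with the fact that elements of $\stab_E$ commute with $EE^+$, and for natural reductivity it invokes a direct calculation with the explicit projection $\Omega \mapsto \Omega - (I_{2n}-EE^+)\Omega(I_{2n}-EE^+)$. You instead run the standard ``normal homogeneous space'' argument in the pseudo-Riemannian setting: $\Ad$-invariance of the trace form $\langle X_1,X_2\rangle=\tfrac12\tr(X_1^+X_2)$ (the infinitesimal counterpart of the bi-invariance already established in Section~\ref{sec:SympGroup}) forces the orthogonal complement of the $\Ad(\stab_E)$-invariant subalgebra $\Ver^\pi\sp(2n,\R)$ to be $\Ad(\stab_E)$-invariant, and the associated ad-skewness identity, projected onto $\mathfrak{m}=\Hor^{\pi,h}\sp(2n,\R)$, yields the naturally reductive condition; your cyclic form of that condition is equivalent to the one in \cite[Definition 11.23]{ONeill1983} by symmetry of the form and antisymmetry of the bracket. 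You also correctly identify the only genuinely pseudo-Riemannian caveat, namely that $h$ must restrict nondegenerately to the vertical subalgebra so that $\sp(2n,\R)=\mathfrak{h}\oplus\mathfrak{m}$ really is a direct sum; this is exactly what \eqref{eq:VerHorSt} records, and the explicit block forms there (or a short computation on them) settle it. The trade-off: your route is shorter, coordinate-free, and shows the result holds for any closed subgroup with nondegenerate restriction of a bi-invariant metric, whereas the paper's block-matrix route has the side benefit of producing the explicit horizontality criterion and projection formula that are reused later (tangent parameterizations, horizontal lifts, and the geodesic formulas).
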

    \begin{proof}
        In view of \eqref{eq:VerHorSt} and the fact that $\Ver^\pi\sp(2n,\R)$ is isomorphic to the Lie algebra of $\Sp(2(n-k),\R) \cong \stab_E$, 
        we need to show that the complementary subspace $\Hor^{\pi,h}\sp(2n,\R)$ is $\Ad(\stab_E)$-invariant in order to establish reductiveness,
        where $\Ad_M(\Omega) = M \Omega M^+$, see \cite[\S 11, p. 303]{ONeill1983}.
        %
        In fact, for every $M \in \stab_E$ (and therefore $M^+ \in \stab_E$) and every $\Omega \in \Hor^{\pi,h}\sp(2n,\R)$, it holds that
        \begin{align*}
           \Ad_M(\Omega) = M\Omega M^+ &= M \Omega EE^+ M^+ + MEE^+\Omega M^+ - MEE^+ \Omega EE^+ M^+\\
            &= M\Omega M^+ EE^+ + EE^+ M\Omega M^+ - EE^+ M\Omega M^+ EE^+.
        \end{align*}
        By Proposition~\eqref{prop:HorOmegaStProperty}, it follows that $\Ad_M(\Omega) \in \Hor^{\pi,h}\sp(2n,\R)$, which means that $\Hor^{\pi,h}\sp(2n,\R)$ is $\Ad(\stab_E)$-invariant. Therefore, $\SpSt(2n,2k)$ is reductive.
        
        The fact that $\SpSt(2n,2k)$ is naturally reductive with respect to $h$ follows from a direct calculation, by making use of the fact that the projection from $\sp(2n,\R)$ onto $\Hor^{\pi,h}\sp(2n,\R)$ is given by $\Omega \mapsto \Omega - (I_{2n}-EE^+)\Omega(I_{2n}-EE^+)$.
    \end{proof}

Recall that the Lie algebra  $\sp(2n,\R) = T_I\Sp(2n,\R)$ is the tangent space at the identity.
By left translation, every tangent space $T_M\Sp(2n,\R)$ can be split into a vertical and horizontal part,
\begin{equation}
\begin{split}
\label{eq:StiefelVerHorPseudoRiem}
    \Ver^\pi_M\Sp(2n,\R) &:= \left\lbrace M\Omega \ \middle|\  \Omega \in  \Ver^\pi\sp(2n,\R) \right\rbrace\\
    \Hor^{\pi,h}_M\Sp(2n,\R) &:= \left\lbrace M\Omega \ \middle|\  \Omega \in  \Hor^{\pi,h}\sp(2n,\R) \right\rbrace
\end{split}.
\end{equation}
The horizontal space at $M \in \Sp(2n,\R)$, i.e., $\Hor^{\pi,h}_M\Sp(2n,\R)$, is isomorphic to the tangent space $T_{\pi(M)}\SpSt(2n,2k)$. For $M\Omega \in \Hor^{\pi,h}_M\Sp(2n,\R)$, it holds that \begin{align*}
    \D \pi_M(M\Omega)= M\Omega E = M \begin{bsmallmatrix}
    A_1   & A_2^T   & B_1    & B_2^T    \\
    A_3   & 0     & B_2    & 0      \\
    C_1   & C_2^T & -A_1^T & -A_3^T \\
    C_2   & 0     & -A_2 & 0
    \end{bsmallmatrix} E = M \begin{bsmallmatrix}
    A_1   & B_1 \\
    A_3   & B_2 \\
    C_1   & -A_1^T \\
    C_2   & -A_2
    \end{bsmallmatrix} = UA + U^s B,
\end{align*}
where $A = \begin{bsmallmatrix} A_1 & B_1 \\ C_1 & -A_1^T\end{bsmallmatrix}$, $B=\begin{bsmallmatrix}A_3 & B_2 \\ C_2 & -A_2 \end{bsmallmatrix}$ and $U = ME$. It follows therefore that the application of $\D \pi_M$ to $\Hor^{\pi,h}_M\Sp(2n,\R)$ gives \eqref{eq:TangentSpaceStiefel}. This implies that $T_U\SpSt(2n,2k)$ can also be parameterized as
\begin{equation*}
\label{eq:TangentSpaceStiefel_Large}
    T_U\SpSt(2n,2k) = \left\lbrace M\Omega E \ \middle|\  M \in \pi^{-1}(U),\ \Omega \in \Hor^{\pi,h}\sp(2n,\R) \right\rbrace.
\end{equation*}
\begin{proposition}[Alternative tangent vector parameterization]
    Every tangent vector $\Delta \in T_U\SpSt(2n,2k)$ is of the form $\Delta = \tilde\Omega U$, where 
    \begin{equation*}
        \tilde\Omega = M\Omega M^+ \in \sp(2n,\R),
    \end{equation*} 
    with $\Omega \in \Hor^{\pi,h}\sp(2n,\R)$, is unique. 
    It can be calculated via
\begin{equation}
\label{eq:OmegatildefromDelta}
    \tilde\Omega(U,\Delta) = \left(I_{2n}-\frac12UU^+\right)\Delta U^+ - U\Delta^+\left(I_{2n} -\frac12 UU^+\right).
\end{equation}
\end{proposition}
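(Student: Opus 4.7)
The plan is to pull $\Delta$ up to the total space $\Sp(2n,\R)$ via the submersion $\pi$, work with its horizontal lift, and transport the Lie algebra identities back down by conjugating with an arbitrary $M \in \pi^{-1}(U)$. First I would apply the tangent-space description just above to write $\Delta = M\Omega E$ with $\Omega \in \Hor^{\pi,h}\sp(2n,\R)$, and set $\tilde\Omega := M\Omega M^+$. Then $\tilde\Omega^+ = (M\Omega M^+)^+ = M\Omega^+ M^+ = -\tilde\Omega$, so $\tilde\Omega \in \sp(2n,\R)$; and since $M^+M = I_{2n}$, we have $\tilde\Omega U = M\Omega(M^+M)E = M\Omega E = \Delta$.

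Next I would translate the horizontality characterization of Proposition~\ref{prop:HorOmegaStProperty} from the Lie algebra at the identity to the Lie algebra at $M$. Using $MEE^+M^+ = UU^+$ (which follows from $U = ME$ together with $(ME)^+ = E^+M^+$) and consequently $M\bigl(I_{2n} - \tfrac12 EE^+\bigr)M^+ = I_{2n} - \tfrac12 UU^+$, conjugating the identity
\[
    \Omega = \bigl(I_{2n} - \tfrac12 EE^+\bigr)\Omega EE^+ - EE^+\Omega^+\bigl(I_{2n} - \tfrac12 EE^+\bigr)
\]
by $M$ yields
\[
    \tilde\Omega = \bigl(I_{2n} - \tfrac12 UU^+\bigr)\tilde\Omega UU^+ - UU^+\tilde\Omega^+\bigl(I_{2n} - \tfrac12 UU^+\bigr).
\]
Now $\tilde\Omega UU^+ = \Delta U^+$ and $UU^+\tilde\Omega^+ = U\Delta^+$ (the latter from $\Delta^+ = (\tilde\Omega U)^+ = U^+\tilde\Omega^+$); substituting both gives the stated formula for $\tilde\Omega(U,\Delta)$.

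Uniqueness then comes for free: the right-hand side of the formula depends only on $U$ and $\Delta$, so any two $\tilde\Omega$'s arising from the construction must coincide. In particular the choice of $M \in \pi^{-1}(U)$ is immaterial, which is the counterpart of the $\Ad(\stab_E)$-invariance of $\Hor^{\pi,h}\sp(2n,\R)$ established in Lemma~\ref{lemma:SpSt_naturally_reductive}. I expect the main obstacle to be the bookkeeping in the conjugation step, in particular ensuring that the symplectic inverse interacts correctly with triple products of the form $M A M^+$; once the basic identity $(MAM^+)^+ = MA^+M^+$ and the transport rule $MEE^+M^+ = UU^+$ are in place, the remainder of the derivation is essentially substitution.
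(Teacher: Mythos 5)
Your argument is correct and is exactly the route the paper intends: the paper's one-line proof ("use $U=ME$ and Proposition~\ref{prop:HorOmegaStProperty}") is precisely your conjugation of the horizontality identity by $M$, using $MEE^+M^+=UU^+$, $(MAM^+)^+=MA^+M^+$, and the substitutions $\tilde\Omega UU^+=\Delta U^+$, $UU^+\tilde\Omega^+=U\Delta^+$, with uniqueness following because the resulting expression depends only on $U$ and $\Delta$. No gaps; you have simply written out the details the paper leaves implicit.
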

\begin{proof}
 This can be seen by making use of $U=ME$ and Proposition \ref{prop:HorOmegaStProperty}.
\end{proof}
Equation \eqref{eq:OmegatildefromDelta} corresponds to $S_{X,Y}J$ from \cite[Proposition 4.3]{GaoSonAbsilStykel2020symplecticoptimization}.

Via horizontal lifts, a pseudo-Riemannian metric on the real symplectic Stiefel manifold can be defined as follows: For two tangent vectors $\Delta_1,\Delta_2 \in T_U\SpSt(2n,2k)$ and $U=ME$, calculate $\tilde{\Omega}(U,\Delta_i)$ according to \eqref{eq:OmegatildefromDelta}, $i=1,2$. The horizontal lift to $\Hor^{\pi,h}_M\Sp(2n,\R)$ is then given by
\begin{equation*}
(\Delta_i)^\hor_M = M\Omega_i = \tilde{\Omega}(U,\Delta_i) M,                                                                                                                                                                                                                                                                                                                                                                          \end{equation*}
where $\Omega_i = M^+\tilde{\Omega}(U,\Delta_i) M$. This follows from \[\D\pi_M((\Delta_i)^\hor_M)=(\Delta_i)^\hor_M E= \tilde{\Omega}(U,\Delta_i)ME = \tilde{\Omega}(U,\Delta_i) U = \Delta_i,\] and the fact that $\Omega_i$ fulfills Proposition~\ref{prop:HorOmegaStProperty}.
In the following, 
we exploit that pseudo-Riemannian submersions
\cite[Definition 7.44]{ONeill1983} are particularly useful
to find the ge\-o\-de\-sics in a quotient space, given that the geodesics in the associated total space are known,
see \cite[Corollary 7.46]{ONeill1983}.
\begin{proposition}
\label{prop:pRmetricSt}
    Let $\Delta_i = U A_i + U^s B_i \in T_U\SpSt(2n,2k)$, $i=1,2$.
    A pseudo-Riemannian metric is defined by $h^\SpSt_U\colon T_U\SpSt(2n,2k) \times T_U\SpSt(2n,2k) \to \R$,
    \begin{equation}
    \label{eq:pRmetricSt}
    \begin{split}
        h^\SpSt_U(\Delta_1,\Delta_2) &:= \SP{\Delta_1,\Delta_2}_U := \SP{(\Delta_2)^\hor_M,(\Delta_2)^\hor_M}_M\\
        &= \tr\left(\Delta_1^+\left(I_{2n}-\frac12 UU^+\right)\Delta_2\right) = \frac12\tr(A_1^+A_2) + \tr(B_1^+B_2).
    \end{split}
    \end{equation} 
    With respect to the metric $h^\SpSt_U$, $\pi$ is a pseudo-Riemannian submersion.
\end{proposition}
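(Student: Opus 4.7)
My plan is to proceed in three stages: first establish well-definedness of the horizontal-lift prescription, then derive the two closed-form expressions, and finally deduce the submersion property.

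For well-definedness, given two preimages $M, M' \in \pi^{-1}(U)$ I would write $M' = MN$ with $N \in \stab_E$ and show that the two horizontal lifts are related by right-translation, $(\Delta_i)^\hor_{M'} = (\Delta_i)^\hor_M \cdot N$. The key point is that if $(\Delta_i)^\hor_M = M\Omega_i$ with $\Omega_i \in \Hor^{\pi,h}\sp(2n,\R)$, then $\Omega'_i := N^+\Omega_i N$ still lies in $\Hor^{\pi,h}\sp(2n,\R)$ by the $\Ad(\stab_E)$-invariance established in Lemma~\ref{lemma:SpSt_naturally_reductive}, and it satisfies $M'\Omega'_i E = \Delta_i$. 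Bi-invariance of $h$ from \eqref{eq:pRmetricSp} then makes the value $\SP{(\Delta_1)^\hor_M,(\Delta_2)^\hor_M}_M$ independent of the chosen preimage.

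Next, using $(\Delta_i)^\hor_M = \tilde\Omega_i M$ together with $(XM)^+ = M^+X^+$, $\tilde\Omega_i^+ = -\tilde\Omega_i$ and $MM^+ = I_{2n}$, I would reduce $\SP{(\Delta_1)^\hor_M,(\Delta_2)^\hor_M}_M$ to $-\tfrac12\tr(\tilde\Omega_1\tilde\Omega_2)$. Substituting \eqref{eq:OmegatildefromDelta} produces four trace terms that I expect to collapse via $U^+U = I_{2k}$, the projector identity $U^+(I_{2n} - \tfrac12 UU^+) = \tfrac12 U^+$, and the horizontality relation $\Delta_i^+U = -U^+\Delta_i$ (a consequence of $U^+\Delta_i \in \sp(2k,\R)$), yielding the middle expression $\tr(\Delta_1^+(I_{2n}-\tfrac12 UU^+)\Delta_2)$. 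Substituting the unique decomposition $\Delta_i = UA_i + U^sB_i$ with $A_i \in \sp(2k,\R)$ and applying $U^+U^s = 0$, $(U^s)^+U^s = I_{2(n-k)}$ should then produce the final block form $\tfrac12\tr(A_1^+A_2) + \tr(B_1^+B_2)$.

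The block form will make smoothness, symmetry and bilinearity immediate. For nondegeneracy I would split according to the unique decomposition: the first summand is a nonzero multiple of the trace form on the semisimple Lie algebra $\sp(2k,\R)$ and is therefore nondegenerate, while on the second summand the symplectic inverse $B\mapsto B^+$ is a linear isomorphism $\R^{2(n-k)\times 2k}\to\R^{2k\times 2(n-k)}$, so the trace pairing $(B_1,B_2)\mapsto\tr(B_1^+B_2)$ is nondegenerate; choosing $\Delta_2$ with $B_2 = 0$ or $A_2 = 0$ separates the two blocks. The pseudo-Riemannian submersion property then follows from the very definition of $h^\SpSt_U$ via horizontal lifts, since by construction $\D\pi_M$ restricted to $\Hor^{\pi,h}_M\Sp(2n,\R)$ is a linear isometry onto $(T_U\SpSt(2n,2k), h^\SpSt_U)$. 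The main obstacle I foresee is the four-term trace expansion in the middle step: keeping track of the half-projector $\tfrac12 UU^+$ and invoking $\Delta_i^+U = -U^+\Delta_i$ at precisely the right places will be essential for the cancellations to close up.
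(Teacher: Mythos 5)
Your proposal is correct, but it justifies the last claim differently from the paper. The computational part coincides: the paper also obtains the two identities in \eqref{eq:pRmetricSt} by the same direct calculation you outline (reduce to $-\tfrac12\tr(\tilde\Omega_1\tilde\Omega_2)$, use $U^+U=I_{2k}$, $U^+\Delta_i\in\sp(2k,\R)$ and the decomposition $\Delta_i=UA_i+U^sB_i$). For the remaining assertions, however, the paper argues abstractly: it notes that $h^\SpSt_U$ inherits the pseudo-Riemannian property from $h$ (nondegeneracy on the horizontal space being implicit in the direct-sum splitting \eqref{eq:VerHorSt}) and then deduces the submersion property by invoking Lemma~\ref{lemma:SpSt_naturally_reductive} together with \cite[Lemma 11.24]{ONeill1983}, i.e.\ the naturally reductive homogeneous-space machinery. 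You instead unpack that machinery by hand: well-definedness of the horizontal-lift prescription via $\Ad(\stab_E)$-invariance of $\Hor^{\pi,h}\sp(2n,\R)$, $NE=E$ and bi-invariance of $h$; nondegeneracy read off from the block form ($-\tr(A_1A_2)$ on the semisimple algebra $\sp(2k,\R)$, and the perfect pairing $\tr(B_1^+B_2)$); and the submersion property directly from the fact that $\D\pi_M$ restricted to $\Hor^{\pi,h}_M\Sp(2n,\R)$ is by construction an isometry. Your route is more self-contained and makes the consistency check explicit that the paper leaves to the cited lemma, while the paper's route is shorter and simultaneously prepares the geodesic result via natural reductivity. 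One small point to make explicit in your argument: O'Neill's notion of pseudo-Riemannian submersion also requires the fibers of $\pi$ to be pseudo-Riemannian submanifolds of $(\Sp(2n,\R),h)$, i.e.\ nondegeneracy of $h$ on the vertical spaces; this is exactly what underwrites the splitting \eqref{eq:VerHorSt} and the existence and uniqueness of the horizontal lifts you use, so you should cite it as part of the setup rather than leave it tacit.
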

    \begin{proof}
        A direct calculation shows the identities of \eqref{eq:pRmetricSt}. Since $h$ is a pseudo-Riemannian metric, $h_U^\SpSt$ is a pseudo-Riemannian metric as well. The projection $\pi$ is then a pseudo-Riemannian submersion by Lemma~\ref{lemma:SpSt_naturally_reductive} and \cite[Lemma 11.24]{ONeill1983}.
    \end{proof}


The geodesics, calculated via the exponential mapping with respect to $h_U^\SpSt$, can be found via the projection of the exponential mapping in $\Sp(2n,\R)$.
\begin{proposition}
    Let $U \in \SpSt(2n,2k)$ and $M \in \pi^{-1}(U) \subset \Sp(2n,\R)$. Furthermore, let $\Delta = M\Omega \in T_U\SpSt(2n,2k)$. The geodesic $\gamma$ with respect to the pseudo-Riemannian metric \eqref{eq:pRmetricSt} that  starts from $\gamma(0)= U$ in direction $\dot\gamma(0)= \Delta$ is
    \begin{equation}\label{eq:StiefelBigExp}
    \begin{split}
        \gamma(t) &:= \Exp^{\SpSt,h}_U(t\Delta) := \pi(\Exp^{\Sp,h}_M(t\Delta^\hor_M))\\
        &= M \expm(t\Omega)E = \expm(t\tilde\Omega(U,\Delta))U,
    \end{split}
    \end{equation}
    with $\tilde\Omega(U,\Delta) = M\Omega M^+$ from \eqref{eq:OmegatildefromDelta}.
\end{proposition}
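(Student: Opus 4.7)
The plan is to reduce to the already-established geodesics on the total space by exploiting that $\pi$ is a pseudo-Riemannian submersion. By Proposition~\ref{prop:pRmetricSt} together with \cite[Corollary 7.46]{ONeill1983}, horizontal geodesics in $\Sp(2n,\R)$ project to geodesics in $\SpSt(2n,2k)$. So I would first identify the horizontal geodesic in $\Sp(2n,\R)$ starting at $M$ whose initial velocity is the horizontal lift of $\Delta$, and then project it down.

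Let $\Omega \in \Hor^{\pi,h}\sp(2n,\R)$ be the unique horizontal Lie-algebra element with $\Delta^\hor_M = M\Omega$, so that $\D\pi_M(M\Omega) = M\Omega E = \Delta$. Using the bi-invariant formula derived in Section~\ref{sec:SympGroup}, the geodesic in $\Sp(2n,\R)$ emanating from $M$ with initial velocity $M\Omega$ is $\tilde\gamma(t) = \Exp_M^{\Sp,h}(tM\Omega) = M\expm(t\Omega)$. Its velocity at parameter $t$ equals $M\expm(t\Omega)\,\Omega$, which belongs to $\Hor^{\pi,h}_{\tilde\gamma(t)}\Sp(2n,\R)$ by \eqref{eq:StiefelVerHorPseudoRiem}, since the horizontal distribution is obtained by left-translating the fixed subspace $\Hor^{\pi,h}\sp(2n,\R)$ and $\Omega$ lies in this subspace. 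Hence $\tilde\gamma$ is horizontal throughout, and projecting yields the first claimed formula $\gamma(t) = \pi(\tilde\gamma(t)) = M\expm(t\Omega)\,E$.

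For the second representation I would combine the conjugation identity $M\expm(t\Omega)M^{+} = \expm(t\,M\Omega M^{+})$ with $U = ME$ and $\tilde\Omega(U,\Delta) = M\Omega M^{+}$; the latter identification is valid because $\Omega \in \Hor^{\pi,h}\sp(2n,\R)$ satisfies the symmetry of Proposition~\ref{prop:HorOmegaStProperty}, and a short calculation then shows that $M\Omega M^{+}$ coincides with the explicit expression \eqref{eq:OmegatildefromDelta}. This gives $M\expm(t\Omega)E = \expm(t\,M\Omega M^{+})\,ME = \expm(t\,\tilde\Omega(U,\Delta))\,U$. The initial conditions $\gamma(0) = U$ and $\dot\gamma(0) = \Delta$ follow directly from $ME=U$ and $\D\pi_M(M\Omega)=\Delta$.

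No step is really an obstacle: the horizontality of $\tilde\gamma$ is automatic from the left-invariant description of $\Hor^{\pi,h}\Sp(2n,\R)$, and the submersion property was already secured via the natural reductiveness in Lemma~\ref{lemma:SpSt_naturally_reductive}. The only point demanding care is the well-definedness of the formula in $M$: two lifts $M, M'$ of $U$ differ by right multiplication by an element of $\stab_E$, and $\Ad$-invariance of $\Hor^{\pi,h}\sp(2n,\R)$ under $\stab_E$ (again from Lemma~\ref{lemma:SpSt_naturally_reductive}) guarantees that both produce the same curve $\gamma$ on $\SpSt(2n,2k)$.
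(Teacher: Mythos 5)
Your argument is correct and follows the same route as the paper: invoke the pseudo-Riemannian submersion property from Proposition~\ref{prop:pRmetricSt} and \cite[Corollary 7.46]{ONeill1983}, observe that the one-parameter-subgroup geodesic $M\expm(t\Omega)$ stays horizontal because the horizontal distribution is the left-translate of the fixed subspace $\Hor^{\pi,h}\sp(2n,\R)$, and project. The paper's proof is simply terser, leaving the horizontality of the lifted geodesic, the identification $\tilde\Omega(U,\Delta)=M\Omega M^{+}$, and the independence of the choice of lift $M$ as immediate, whereas you spell these out explicitly.
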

    \begin{proof}
        By \cite[Corollary 7.46]{ONeill1983}, horizontal geodesics in $\Sp(2n,\R)$ are mapped to geodesics in the quotient $\SpSt(2n,2k)$ under the pseudo-Riemannian submersion $\pi$. The facts that $\gamma(0)=U$ and $\dot\gamma(0)=M\Omega=\Delta$ are immediate.
    \end{proof}

In the form of \eqref{eq:StiefelBigExp}, the exponential mapping depends on the matrix exponential of a $2n\times 2n$ matrix. For tangent vectors $\Delta \in T_U\SpSt(2n,2k)$ with $\Delta^+(I_{2n}-UU^+)\Delta$ invertible, we can reduce the computational complexity to $4k \times 4k$. This is rendered possible by the fact that for $X,Y \in \R^{n \times k}$ with $Y^TX \in \R^{k\times k}$ non-singular, we have from \cite[Prop. 3]{Celledoni2000} that
\begin{equation}\label{eq:reducedexp}
    \expm(XY^T) = I_n + X(\expm(Y^TX)- I_k)(Y^TX)^{-1}Y^T.
\end{equation}
%
\begin{proposition}\label{prop:StiefelReducedExp}
    Let $U \in \SpSt(2n,2k)$ and $\Delta \in T_U\SpSt(2n,2k)$. Define $A = U^+\Delta$ and $H =\Delta - UA$. If $H^+H$ is invertible,  then the geodesic from $U$ in direction $\Delta$ is given by
    \begin{eqnarray}
       \label{eq:StiefelReducedExp1} 
       \Exp^{\SpSt,h}_U(t\Delta)&=& \begin{bmatrix} U & \frac12 U A + H \end{bmatrix}
       \expm\left(t\begin{bmatrix} \frac12 A & \frac14 A^2 - H^+H\\  
                                     I_{2k}  & \frac12 A 
                   \end{bmatrix}\right)
                   \begin{bmatrix} I_{2k}\\0\end{bmatrix}.
    \end{eqnarray} 
\end{proposition}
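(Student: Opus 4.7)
The plan is to start from the ``large'' formula $\Exp^{\SpSt,h}_U(t\Delta) = \expm(t\tilde\Omega(U,\Delta))U$ of the preceding proposition and reduce $\expm(t\tilde\Omega)$ to a $4k\times 4k$ exponential by exhibiting a low-rank factorization $\tilde\Omega = XY$ with $X \in \R^{2n\times 4k}$, $Y \in \R^{4k\times 2n}$, and then invoking \eqref{eq:reducedexp}.

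For the factorization, introduce the projector $P := I_{2n} - \tfrac12 UU^+$, so that \eqref{eq:OmegatildefromDelta} reads $\tilde\Omega = P\Delta\,U^+ - U(P\Delta)^+$. Then set
\begin{equation*}
X := \begin{bmatrix} U & \tfrac12 UA + H \end{bmatrix} = \begin{bmatrix} U & P\Delta \end{bmatrix}, \qquad Y := \begin{bmatrix} -(P\Delta)^+ \\ U^+ \end{bmatrix},
\end{equation*}
and verify $XY = \tilde\Omega$ by block multiplication. Computing $YX$ next requires the tangent-space identity $A = U^+\Delta \in \sp(2k,\R)$ (whence $\Delta^+U = -A$), together with $U^+U = I_{2k}$ and $U^+H = H^+U = 0$, both of which follow from $H = \Delta - UA$. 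These give $PU = \tfrac12 U$, $U^+P\Delta = \tfrac12 A$, $(P\Delta)^+U = -\tfrac12 A$, and, via the identity $\Delta^+\Delta = H^+H - A^2$, the remaining block $(P\Delta)^+P\Delta = H^+H - \tfrac14 A^2$. Assembled, $YX$ is exactly the $4k\times 4k$ matrix appearing inside the exponential in \eqref{eq:StiefelReducedExp1}.

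To finish, apply \eqref{eq:reducedexp} (with the role of $Y^T$ played by our $Y$) to obtain
\begin{equation*}
\expm(t\tilde\Omega) = I_{2n} + X(\expm(tYX) - I_{4k})(YX)^{-1}Y,
\end{equation*}
multiply on the right by $U$, and use
\begin{equation*}
YU = \begin{bmatrix} -(P\Delta)^+U \\ U^+U \end{bmatrix} = \begin{bmatrix} \tfrac12 A \\ I_{2k} \end{bmatrix} = (YX)\begin{bmatrix} I_{2k} \\ 0 \end{bmatrix}
\end{equation*}
to telescope the right-hand side into $X\expm(tYX)\begin{bmatrix} I_{2k} \\ 0 \end{bmatrix}$, which is \eqref{eq:StiefelReducedExp1}. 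The hypothesis that $H^+H$ is invertible enters exactly once, to guarantee invertibility of $YX$ as required by \eqref{eq:reducedexp}: subtracting $\tfrac12 A$ times the second block row of $YX$ from the first turns it into $\begin{bmatrix} 0 & -H^+H \\ I_{2k} & \tfrac12 A \end{bmatrix}$, which is invertible if and only if $H^+H$ is. The only mildly non-trivial obstacle is the $(\cdot)^+$-algebra needed to derive $\Delta^+\Delta = H^+H - A^2$ and to compute $(P\Delta)^+P\Delta$ cleanly; everything else is routine block manipulation.
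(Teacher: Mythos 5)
Your proposal is correct and follows essentially the same route as the paper: factor $\tilde\Omega(U,\Delta)$ as a rank-$4k$ product, apply \eqref{eq:reducedexp}, and telescope using the fact that $(YX)^{-1}YU$ is a coordinate block, with invertibility of $YX$ equivalent to that of $H^+H$. The only difference is cosmetic: by ordering the blocks as $X=\begin{bmatrix}U & \tfrac12 UA+H\end{bmatrix}$, $Y=\begin{bmatrix}-(P\Delta)^+\\ U^+\end{bmatrix}$ (the paper's factorization conjugated by $J_{4k}$), you obtain the stated block form directly and avoid the paper's final $J_{4k}$-conjugation step.
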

%
\begin{proof}
    We start from \eqref{eq:StiefelBigExp} with $\tilde\Omega(U,\Delta)$ according to \eqref{eq:OmegatildefromDelta}.
    Introducing
    \begin{equation*}
     X= \begin{bmatrix} (I_{2n}-\frac12 UU^+)\Delta & -U \end{bmatrix} = \begin{bmatrix} \frac12 UA + H & -U\end{bmatrix} \in \R^{2n \times 4k}
    \end{equation*} and 
    $Y^T = \begin{bmatrix} U^+ \\ \Delta^+(I_{2n}-\frac12 UU^+) \end{bmatrix} \in \R^{4k \times 2n}$, we have $\tilde\Omega(U,\Delta) = XY^T$.
    Furthermore
    \begin{align*}
        Y^TX = \begin{bmatrix} \frac12 U^+\Delta & -I_{2k}\\  \Delta^+(I_{2n}-\frac34UU^+)\Delta & -\frac12 \Delta^+ U \end{bmatrix}
        = \begin{bmatrix} \frac12 A & -I_{2k}\\ H^+H -\frac14A^2 & \frac12 A \end{bmatrix}.
    \end{align*}
    If $(Y^TX)^{-1}$ exists, it is given by
    \begin{align*}
        (Y^TX)^{-1}= \begin{bmatrix} \frac12 (H^+H)^{-1}A & (H^+H)^{-1}\\ \frac14 A(H^+H)^{-1}A - I_{2k} & \frac12 A(H^+H)^{-1}\end{bmatrix},
    \end{align*}
    and therefore $Y^TX$ is invertible if and only if $H^+H$ is invertible. It furthermore holds that $(Y^TX)^{-1} Y^T U = (Y^TX)^{-1}Y^TX \begin{bsmallmatrix} 0 \\ -I_{2k}\end{bsmallmatrix} =  \begin{bsmallmatrix} 0 \\ -I_{2k}\end{bsmallmatrix}$. 
    By \eqref{eq:StiefelBigExp} it holds that $\Exp^{\SpSt,h}_U(t\Delta) = \expm(t\tilde\Omega)U= \expm(tXY^T)U$. Now by \eqref{eq:reducedexp}, it holds for $t\neq 0$ that
    \begin{align*}
        \expm(tXY^T)U &= (I_{2n} + tX(\expm(tY^TX)-I_{2k})(tY^TX)^{-1}Y^T)U\\
        &= U + X(\expm(tY^TX)-I_{2k})(Y^TX)^{-1}Y^TU\\
        &= U + X\expm(tY^TX)\begin{bmatrix}0\\-I_{2k}\end{bmatrix} - X\begin{bmatrix}0\\-I_{2k}\end{bmatrix}\\
        &= X\expm(tY^TX)\begin{bmatrix}0\\-I_{2k}\end{bmatrix}.
    \end{align*}
    Moreover, $\lim_{t\to 0} X\expm(tY^TX)\begin{bsmallmatrix}0\\-I_{2k}\end{bsmallmatrix} = U = \Exp^{\SpSt,h}_U(0\cdot\Delta)$.\\
    The form \eqref{eq:StiefelReducedExp1} is obtained by 
    \[
     X\expm(tY^TX)\begin{bmatrix}0\\-I_{2k}\end{bmatrix}
     =XJ_{4k}^T\expm(tJ_{4k}Y^TXJ_{4k}^T)J_{4k}\begin{bmatrix}0\\-I_{2k}\end{bmatrix}.
    \]
\end{proof}
%
Note that for the calculation of \eqref{eq:StiefelReducedExp1} we don't need the invertibility of $H^+H$, and one can check that the right hand side is always an element in the symplectic Stiefel manifold $\SpSt(2n,2k)$. We can therefore always apply \eqref{eq:StiefelReducedExp1} to calculate a curve.

\begin{remark}
    The simplified formula for the symplectic Stiefel exponential \eqref{eq:StiefelReducedExp1} is similar to the so-called quasigeodesic retraction defined in \cite[Lemma 5.1]{GaoSonAbsilStykel2020symplecticoptimization}, which in our notation is given as
    \begin{equation}
    \label{eq:QuasiGeodesicRetraction}
        \Ret^\qgeo_U(\Delta) = \begin{bmatrix} U & \Delta\end{bmatrix}\expm\left(\begin{bmatrix}U^+\Delta & -\Delta^+\Delta\\ I_{2k} & U^+\Delta \end{bmatrix}\right)\begin{bmatrix} I_{2k}\\ 0\end{bmatrix}\expm(-U^+\Delta).
    \end{equation}
    The two curves are however not identical. Note also the structural similarity with the formula for the Euclidean Stiefel geodesics of \cite[Section 2.2.2]{EdelmanAriasSmith1999}.
\end{remark}

\subsection{Right-invariant Riemannian metric  on \texorpdfstring{$\SpSt(2n,2k)$}{SpSt(2n,2k)}}
The real symplectic Stiefel manifold may be equipped with different Riemannian metrics.
The so-called ca\-no\-nical-like metric has been studied in \cite{GaoSonAbsilStykel2020symplecticoptimization}, while \cite{GaoSonAbsilStykel2021symplecticEuclidean} considers a restriction of the Euclidean metric. To the best of the authors' knowledge, the geodesics for these metrics are unknown. 
Complementary to the aforementioned Riemannian metrics, 
we use the Riemannian metric $g_M$ of \eqref{eq:RiemMetricSp} on $\Sp(2n,\R)$ to introduce a third Riemannian metric on $\SpSt(2n,2k)$ via a horizontal lift, which allows us to find the corresponding geodesics.
This metric is invariant under the group action of $\Sp(2k,\R)$ from the right and induces therefore a Riemannian metric on the symplectic subspaces that will be considered in Subsection~\ref{subsec:SymplGrassRiemannianMetric}.

We begin by splitting the tangent space $T_M\Sp(2n,\R)$ at $M \in \Sp(2n,\R)$ into a vertical and a horizontal part with respect to $g_M$ and the projection $\pi$ from \eqref{eq:projStiefel},
\begin{equation*}
    T_M\Sp(2n,\R) = \Ver^\pi_M\Sp(2n,\R) \oplus \Hor^{\pi,g}_M\Sp(2n,\R).
\end{equation*} 
 As the vertical part is defined as the kernel of $\D \pi_M$, it is the same as in \eqref{eq:StiefelVerHorPseudoRiem}. 
 The horizontal part, however, is different, since it is now given as the orthogonal complement of $\Ver^\pi_M\Sp(2n,\R)$ with respect to the metric $g_M$ of \eqref{eq:RiemMetricSp}. This yields
\begin{equation}
\label{eq:hor_pi_gM}
    \Hor^{\pi,g}_M\Sp(2n,\R) = \left\lbrace \bar\Omega M \ \middle|\ \bar\Omega = \bar\Omega P + P \bar\Omega - P\bar\Omega P \in \sp(2n,\R)\right\rbrace.
\end{equation}
Here, $P = J_{2n}^T U U^+ J_{2n}$ and $U = \pi(M) = ME$. 
Equation \eqref{eq:hor_pi_gM} can be established as follows: Any horizontal tangent vector $X = \bar\Omega M$, with $\bar\Omega \in \sp(2n,\R)$, fulfills for all $Y = M \Omega \in \Ver^\pi_M\Sp(2n,\R)$
\begin{align*}
    0 = g_M(X,Y) = \frac12 \tr(\bar\Omega^TM\Omega M^+) = \frac12\tr(M^+ J_{2n}\bar\Omega J_{2n}M\Omega).
\end{align*}
Define $\hat\Omega := M^+ J_{2n} \bar\Omega J_{2n}M$. Then $0 = \frac12\tr(\hat\Omega \Omega)$ for all $\Omega \in \Ver^\pi\sp(2n,\R)$ implies that $\hat\Omega$ fulfills \eqref{eq:HorOmegaStProperty}.
By making use of $E = M^+ME = M^+U$, it follows that
\begin{align*}
    \bar\Omega &= J_{2n}M\hat\Omega M^+J_{2n}\\
    &= J_{2n}M(\hat\Omega EE^+ +EE^+\hat\Omega - EE^+\hat\Omega EE^+) M^+J_{2n}\\
    &= \bar\Omega J_{2n}UU^+J_{2n}^T + J_{2n}UU^+J_{2n}^T\bar\Omega - J_{2n}UU^+J_{2n}^T\bar\Omega J_{2n}UU^+J_{2n}^T.
\end{align*}
Conversely, if $\bar\Omega$ fulfills the above equation, then it follows that $\hat\Omega := M^+ J_{2n} \bar\Omega J_{2n}M$ fulfills \eqref{eq:HorOmegaStProperty} and therefore $\bar\Omega M \in \Hor^{\pi,g}_M\Sp(2n,\R)$.

As usual, for $U=\pi(M)$ we can identify the tangent space $T_U\SpSt(2n,2k)$ with the horizontal space $\Hor^{\pi,g}_M\Sp(2n,\R)$. Any $\Delta \in T_U\SpSt(2n,2k)$ is of the form $\Delta = \pi(\bar\Omega M)= \bar\Omega U$ for some $\bar\Omega M \in \Hor^{\pi,g}_M\Sp(2n,\R)$, and we can find the horizontal lift 
\begin{equation}
\label{eq:RiemHorLiftStiefel}
    \Delta^{\hor,g}_M=\bar\Omega(\Delta) M
\end{equation} 
via
\begin{equation}
 \label{eq:barOmegafromDelta}
 \bar\Omega(\Delta)= \Delta (U^TU)^{-1} U^T + J_{2n}U(U^TU)^{-1}\Delta^T(I_{2n}-J_{2n}^TU(U^TU)^{-1}U^TJ_{2n})J_{2n}.
\end{equation}
This follows from the facts that
\begin{enumerate}
 \item $\bar\Omega(\Delta)^+ = -\bar\Omega(\Delta)$, so $\bar\Omega(\Delta) \in \sp(2n,\R)$,
 \item $\bar\Omega(\Delta) U = \Delta$, and
 \item $\bar\Omega(\Delta) = \bar\Omega(\Delta) P + P\bar\Omega(\Delta) - P\bar\Omega(\Delta) P$, with $P = J_{2n}UU^+J_{2n}^T$,
\end{enumerate}
where the last equation follows from a straightforward calculation.

The right action of $\Sp(2(n-k),\R) \cong \stab_E$ on $\Sp(2n,\R)$ is vertical, i.e., $\pi(MN)=\pi(M)$ for all $N \in \stab_E$. The action is also transitive on fibers, i.e., for $M,M' \in \Sp(2n,\R)$ with $\pi(M)= U = \pi(M')$ it holds that $M M^+M' = M'$ and $M^+M' \in \stab_E$. It is furthermore isometric, by right-invariance of \eqref{eq:RiemMetricSp}. From \cite[Theorem 2.28]{Lee2018riemannian}, it follows that there is a unique Riemannian metric on $\SpSt(2n,2k)$ such that $\pi$ is a Riemannian submersion.
This Riemannian metric is given via the horizontal lift.
\begin{proposition}
The Riemannian metric on $\SpSt(2n,2k)$, for which $\pi$ is a Riemannian submersion, is right-invariant and given point-wise by
\begin{equation}
\label{eq:RiemMetricStiefel}
 \begin{split}
 g^\SpSt_U &\colon T_U\SpSt(2n,2k) \times T_U\SpSt(2n,2k) \to \R,\\
    g^\SpSt_U(\Delta_1,\Delta_2) &:= g_M((\Delta_1)^{\hor,g}_M,(\Delta_2)^{\hor,g}_M)\\
    &= \tr\left(\Delta_1^T\left(I_{2n}-\frac12 J_{2n}^TU(U^TU)^{-1}U^TJ_{2n}\right)\Delta_2(U^TU)^{-1}\right).
 \end{split}
\end{equation}
\end{proposition}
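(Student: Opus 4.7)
The existence and uniqueness of a Riemannian metric on $\SpSt(2n,2k)$ making $\pi$ a Riemannian submersion is already supplied by the discussion preceding the proposition (via \cite[Theorem 2.28]{Lee2018riemannian}, using that the $\stab_E$-action is vertical, transitive on fibers, and isometric by right-invariance of $g$). Thus the real content of the statement is (i) that this inherited metric admits the explicit formula \eqref{eq:RiemMetricStiefel}, and (ii) that it is right-invariant under the $\Sp(2k,\R)$-action. The plan is to verify these two items directly, starting from the horizontal lift formulas already derived.

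For (i), the starting point is the identity $g^\SpSt_U(\Delta_1,\Delta_2) = g_M(\bar\Omega(\Delta_1)M,\bar\Omega(\Delta_2)M)$. Using $\bar\Omega(\Delta_i)M \cdot M^+ = \bar\Omega(\Delta_i)$ (which is independent of the representative $M \in \pi^{-1}(U)$, so the definition is well posed), the definition of $g_M$ in \eqref{eq:RiemMetricSp} collapses to
\begin{equation*}
 g^\SpSt_U(\Delta_1,\Delta_2) = \tfrac12 \tr\bigl(\bar\Omega(\Delta_1)^T\bar\Omega(\Delta_2)\bigr).
\end{equation*}
I would now substitute the explicit expression \eqref{eq:barOmegafromDelta} for each $\bar\Omega(\Delta_i)$, expand the product into the four resulting terms, and simplify each one using the three characterizing properties listed after \eqref{eq:barOmegafromDelta}, namely $\bar\Omega(\Delta)U = \Delta$, the Hamiltonian relation $\bar\Omega^T = -J_{2n}\bar\Omega J_{2n}^T$, and the horizontality condition involving $P=J_{2n}^T U(U^TU)^{-1}U^T J_{2n}$. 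Concretely: the $U$-factors on the inner side are absorbed by $\bar\Omega(\Delta_j)U = \Delta_j$, and the two cross terms can be rewritten via the Hamiltonian property so that they combine with the pure "first-term" piece to produce precisely $\tr(\Delta_1^T \Delta_2 (U^TU)^{-1})$, while the pure "second-term" piece contributes $-\tfrac12 \tr(\Delta_1^T P\Delta_2(U^TU)^{-1})$. Matching yields $\tr\bigl(\Delta_1^T(I_{2n}-\tfrac12 P)\Delta_2 (U^TU)^{-1}\bigr)$, which is \eqref{eq:RiemMetricStiefel}.

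For (ii), let $N \in \Sp(2k,\R)$ and consider the action $(U,\Delta_1,\Delta_2) \mapsto (UN,\Delta_1N,\Delta_2N)$, which preserves $\SpSt(2n,2k)$ and its tangent bundle. Two observations suffice. First, $(UN)^T(UN) = N^T(U^TU)N$, so $((UN)^T(UN))^{-1} = N^{-1}(U^TU)^{-1}N^{-T}$. Second, the projector $P$ is invariant under $U \mapsto UN$, since $UN\bigl(N^T(U^TU)N\bigr)^{-1}(UN)^T = U(U^TU)^{-1}U^T$. Substituting into \eqref{eq:RiemMetricStiefel} for $g^\SpSt_{UN}(\Delta_1N,\Delta_2N)$, the factors $N^T$ on the left and $N^{-T}$ on the right cancel by the cyclic property of the trace, recovering $g^\SpSt_U(\Delta_1,\Delta_2)$.

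The main obstacle is the bookkeeping in step (i): of the four expanded trace terms, two involve the projector $P$ and two do not, and it is the interplay between the Hamiltonian identity and the presence of the factor $(I-R)J_{2n}$ inside $\bar\Omega(\Delta)$ that produces the precise coefficient $\tfrac12$ in front of $P$. Everything else (well-definedness and right-invariance) reduces to routine manipulations once this identity has been established.
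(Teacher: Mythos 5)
Your proposal is correct and follows essentially the same route as the paper: the Riemannian submersion property and right-invariance are attributed to the horizontal-lift construction (the paper cites this directly, while you additionally re-verify right-invariance from the final formula, a harmless variation), and the explicit expression is obtained by combining \eqref{eq:RiemMetricSp}, \eqref{eq:RiemHorLiftStiefel} and \eqref{eq:barOmegafromDelta}, exactly as the paper's proof indicates. One small bookkeeping correction to your sketch: in expanding $\tfrac12\tr\bigl(\bar\Omega(\Delta_1)^T\bar\Omega(\Delta_2)\bigr)$ the two cross terms actually vanish, because $\bigl(I_{2n}-J_{2n}^TU(U^TU)^{-1}U^TJ_{2n}\bigr)J_{2n}U=0$, and it is the remaining ``pure second'' term that supplies (via idempotency and symmetry of $P$) both the second copy of $\tr\bigl(\Delta_1^T\Delta_2(U^TU)^{-1}\bigr)$ and the term $-\tr\bigl(\Delta_1^TP\Delta_2(U^TU)^{-1}\bigr)$, so the expansion still yields \eqref{eq:RiemMetricStiefel} as claimed.
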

\begin{proof}
 The Riemannian submersion property and right-invariance hold by the definition of $g^\SpSt_U$ via the horizontal lift. The second equality follows from the combination of \eqref{eq:RiemMetricSp}, \eqref{eq:RiemHorLiftStiefel} and \eqref{eq:barOmegafromDelta}.
\end{proof}

The Riemannian gradient of a function $f\colon \SpSt(2n,2k) \to \R$ with respect to $g^\SpSt$ is given by
\begin{equation}
    \label{eq:RiemGradientStiefel}
    \grad^g_f(U)=\nabla f(U) U^TU + J_{2n}U(\nabla f(U))^T J_{2n}U,
\end{equation}
where $\nabla f(U)$ denotes the Euclidean gradient of a smooth extension of $f$ around $U \in \SpSt(2n,2k)$ in $\R^{2n \times 2k}$ at $U$. This holds because $U^+\grad^g_f(U)=-(\grad^g_f(U))^+U$, which implies $\grad^g_f(U) \in T_U\SpSt(2n,2k)$, and because $\grad^g_f(U)$ solves
\begin{equation*}
    g^\SpSt_U(\grad^g_f(U),\Delta) = \D f_U(\Delta) = \tr((\nabla f(U))^T\Delta)
\end{equation*}
for all $\Delta \in T_U\SpSt(2n,2k)$.

Riemannian geodesics on $\Sp(2n,\R)$ with a horizontal tangent vector at every point project to Riemannian geodesics on $\SpSt(2n,2k)$ by \cite[Corollary 7.46]{ONeill1983}.
(Mind that the referenced result is stated in the pseudo-Riemannian setting, but also holds true in the Riemannian case.) 
We show that Riemannian geodesics on $\Sp(2n,\R)$ with initial horizontal tangent vector have a horizontal tangent vector throughout.
\begin{lemma}
\label{lemma:HorStiefelGeodesic}
    Let $M \in \Sp(2n,\R)$ and $X \in \Hor^{\pi,g}_M\Sp(2n,\R)$. Define $\gamma(t):=\Exp^{\Sp,g}_M(tX)$. Then $\dot\gamma(t) \in \Hor^{\pi,g}_{\gamma(t)}\Sp(2n,\R)$.
\end{lemma}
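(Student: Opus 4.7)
My plan is to establish horizontality of $\dot\gamma(t)$ at every time $t$ by a direct calculation from the explicit geodesic formula of the preceding proposition. Writing $X = \bar\Omega M$ with $\bar\Omega \in \sp(2n,\R)$ satisfying the equivalent form $(I_{2n} - P_0)\bar\Omega(I_{2n} - P_0) = 0$ of the horizontality condition \eqref{eq:hor_pi_gM}, where $P_0 := J_{2n}^T U_0 U_0^+ J_{2n}$ and $U_0 = ME$, the geodesic factors as $\gamma(t) = A(t)B(t)M$ with $A(t) := \expm(t(\bar\Omega - \bar\Omega^T)) \in \O(2n)$ and $B(t) := \expm(t\bar\Omega^T)$. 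The product rule then yields
\[
\Theta(t) := \dot\gamma(t)\gamma(t)^{-1} = (\bar\Omega - \bar\Omega^T) + A(t)\bar\Omega^T A(t)^{-1},
\]
so the claim reduces to verifying $(I_{2n} - P(t))\Theta(t)(I_{2n} - P(t)) = 0$, where $P(t) := J_{2n}^T U(t)U(t)^+ J_{2n}$ and $U(t) = \gamma(t)E$.

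The first key step is to bring $P(t)$ into a tractable form. Combining $U(t)^+ = E^T\gamma(t)^{-1}$ with the two relations $\gamma^{-1}J_{2n} = J_{2n}\gamma^T$ and $J_{2n}^T\gamma = \gamma^{-T}J_{2n}^T$ that characterize $\gamma(t) \in \Sp(2n,\R)$, together with the block identity $J_{2n}^T EE^T J_{2n} = EE^T$ (a direct check), both copies of $J_{2n}$ can be moved through $\gamma(t)$, yielding the clean expression $P(t) = \gamma(t)^{-T} EE^T \gamma(t)^T$. Conjugating by $\gamma(t)^T$ recasts horizontality as
\[
(I_{2n} - EE^T)\, S(t)\, (I_{2n} - EE^T) = 0, \qquad S(t) := \gamma(t)^T \Theta(t) \gamma(t)^{-T}.
\]

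The second key step is to show that $S(t)$ is constant in $t$. Substituting $\gamma(t) = A(t)B(t)M$ and invoking in turn (i) $A(t)^T = A(t)^{-1}$, since $\bar\Omega - \bar\Omega^T$ is skew-symmetric, (ii) the commutation $[A(t), \bar\Omega - \bar\Omega^T] = 0$, and (iii) the commutation $[B(t)^T, \bar\Omega] = 0$ arising from $B(t)^T = \expm(t\bar\Omega)$, the $A$- and $B$-factors peel off and one is left with $S(t) = M^T \bar\Omega M^{-T} = S(0)$. Horizontality at time $t$ therefore collapses exactly to the horizontality assumption at $t = 0$ (after undoing the conjugation by $M^T$), and the lemma follows.

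I expect the only real hurdle to be the simplification of $P(t)$ in the first key step, which hinges on the fortunate block identity $J_{2n}^T EE^T J_{2n} = EE^T$ together with the two symplectic relations for $\gamma^{-1}$ and $J_{2n}$. Once $P(t)$ has been brought into the form $\gamma(t)^{-T}EE^T\gamma(t)^T$, the remainder is a short commutation calculation.
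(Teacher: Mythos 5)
Your proof is correct and follows essentially the same route as the paper: both arguments plug the explicit geodesic formula into the horizontality condition $(I_{2n}-P(t))\,\dot\gamma(t)\gamma(t)^{-1}\,(I_{2n}-P(t))=0$ and use the symplectic identities for moving $J_{2n}$ through $\gamma(t)$ together with the commutation of $\expm(t(\bar\Omega-\bar\Omega^T))$ and $\expm(t\bar\Omega)$ with their generators to reduce the condition at time $t$ to the hypothesis at $t=0$. Your packaging via $P(t)=\gamma(t)^{-T}EE^T\gamma(t)^T$ and the constancy of $S(t)=\gamma(t)^T\dot\gamma(t)\gamma(t)^{-1}\gamma(t)^{-T}$ is just a clean way of carrying out the ``straightforward calculation'' the paper leaves implicit.
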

\begin{proof}
 Let $U = \pi(M)$ and $U(t) = \pi(\gamma(t))=\gamma(t)E$. Furthermore, let $P(t) := J_{2n}U(t)U(t)^+J_{2n}^T$. By the structure of the horizontal space, it holds $X = \bar\Omega M$. Define $x(t) := \dot\gamma(t)\gamma(t) \in \sp(2n,\R)$. Then, by \eqref{eq:hor_pi_gM}, $\dot\gamma(t) \in \Hor^{\pi,g}_{\gamma(t)}\Sp(2n,\R)$ is equivalent to
 \begin{equation*}
    x(t) = P(t)x(t) + x(t)P(t) - P(t)x(t)P(t).
 \end{equation*}
 With
 \begin{itemize}
  \item $x(t) = \expm(t(\bar\Omega-\bar\Omega^T))\bar\Omega\expm(-t(\bar\Omega-\bar\Omega^T))$,
  \item $U(t) = \expm(t(\bar\Omega-\bar\Omega^T))\expm(t\bar\Omega^T)U$,
  \item $J_{2n}^T\expm(t(\bar\Omega-\bar\Omega^T))J_{2n} = \expm(t(\bar\Omega-\bar\Omega^T))$, \item $J_{2n}\expm(t\bar\Omega)J_{2n}= \expm(-t\bar\Omega)$ and
  \item $P(t)= \expm(t(\bar\Omega-\bar\Omega^T))\expm(-t\bar\Omega)P(0)\expm(t\bar\Omega)\expm(-t(\bar\Omega-\bar\Omega^T))$,
 \end{itemize}
 the claim follows by a straightforward calculation.
\end{proof}

We are now ready to state the Riemannian geodesics on $\SpSt(2n,2k)$ with respect to the Riemannian metric $g^\SpSt$ from
\eqref{eq:RiemMetricStiefel}.

\begin{proposition}
    Let $U \in \SpSt(2n,2k)$ and $\Delta \in T_U\SpSt(2n,2k)$. Let $M \in \pi^{-1}(U) \subset \Sp(2n,\R)$. Then the geodesic from $U$ in direction $\Delta$ is given by
    \begin{equation}
     \label{eq:RiemGeodesicStiefel}
     \begin{split}
     \Exp^{\SpSt,g}_U(t\Delta) &:= \pi(\Exp^{\Sp,g}_M(t\Delta^{\hor,g}_M))\\
     &=\expm(t(\bar\Omega(\Delta)-\bar\Omega(\Delta)^T))\expm(t\bar\Omega(\Delta)^T)U
     \end{split}
    \end{equation}
    with $\bar\Omega(\Delta)$ as in \eqref{eq:barOmegafromDelta}.
\end{proposition}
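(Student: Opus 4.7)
My plan is to prove this by combining three ingredients already in place: the explicit Riemannian exponential on the total space $\Sp(2n,\R)$, the horizontal-stays-horizontal Lemma~\ref{lemma:HorStiefelGeodesic}, and O'Neill's transfer of geodesics through a Riemannian submersion \cite[Corollary 7.46]{ONeill1983} (which, as noted in the text, applies in the Riemannian case as well).

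First, I would observe that since $\Delta \in T_U\SpSt(2n,2k)$, its horizontal lift at any chosen $M \in \pi^{-1}(U)$ is $\Delta^{\hor,g}_M = \bar\Omega(\Delta)M$, where $\bar\Omega(\Delta)$ is given by \eqref{eq:barOmegafromDelta}; crucially, $\bar\Omega(\Delta)$ depends only on $U$ and $\Delta$, not on the particular representative $M$, so the right-hand side of \eqref{eq:RiemGeodesicStiefel} is well defined. Then, applying the explicit geodesic formula for the right-invariant Riemannian metric $g$ on $\Sp(2n,\R)$ to the horizontal tangent vector $X = \bar\Omega(\Delta)M$, and using $XM^+ = \bar\Omega(\Delta)$, I obtain
\begin{equation*}
   \Exp^{\Sp,g}_M(t\Delta^{\hor,g}_M) = \expm\bigl(t(\bar\Omega(\Delta)-\bar\Omega(\Delta)^T)\bigr)\expm(t\bar\Omega(\Delta)^T)M.
\end{equation*}
Multiplying on the right by $E$ and using $ME = U$ delivers the claimed formula after applying $\pi$.

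Next I would invoke Lemma~\ref{lemma:HorStiefelGeodesic} to see that this curve in $\Sp(2n,\R)$ remains horizontal at every time $t$. Combined with the fact (established earlier) that $\pi$ is a Riemannian submersion for $g^\SpSt$, O'Neill's result then guarantees that the projected curve $t \mapsto \pi(\Exp^{\Sp,g}_M(t\Delta^{\hor,g}_M))$ is a geodesic of $g^\SpSt$ on $\SpSt(2n,2k)$. Finally, I would verify the initial conditions: at $t=0$ the formula evaluates to $U$, and differentiating at $t=0$ yields
\begin{equation*}
   (\bar\Omega(\Delta)-\bar\Omega(\Delta)^T)U + \bar\Omega(\Delta)^T U = \bar\Omega(\Delta)U = \Delta,
\end{equation*}
where the last equality is property~2 of $\bar\Omega(\Delta)$ listed after \eqref{eq:barOmegafromDelta}. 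Uniqueness of geodesics with prescribed initial point and velocity then identifies this curve with $\Exp^{\SpSt,g}_U(t\Delta)$.

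The only non-routine ingredient is Lemma~\ref{lemma:HorStiefelGeodesic}, which was the true workhorse and has already been proved; once its conclusion is available, the present proposition is essentially a matter of substituting $X = \bar\Omega(\Delta)M$ into the $\Sp(2n,\R)$-geodesic formula and projecting. I would therefore expect the proof to be short, amounting to a reference to the preceding lemma and proposition plus the two-line initial-condition check.
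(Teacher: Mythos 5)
Your proposal is correct and follows essentially the same route as the paper: the paper's proof is just a one-line reference to "the preceding discussion," which consists precisely of the ingredients you use — the explicit right-invariant geodesic formula on $\Sp(2n,\R)$, the horizontal lift $\Delta^{\hor,g}_M = \bar\Omega(\Delta)M$ from \eqref{eq:barOmegafromDelta}, Lemma~\ref{lemma:HorStiefelGeodesic}, and the projection of horizontal geodesics via \cite[Corollary 7.46]{ONeill1983}. Your additional checks (well-definedness of $\bar\Omega(\Delta)$ independent of the representative $M$ and the initial-condition verification) are sound and simply make explicit what the paper leaves implicit.
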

\begin{proof}
    This follows directly from the preceding discussion and the definition of the horizontal lift.
\end{proof}

Equation \eqref{eq:RiemGeodesicStiefel} is formulated with $2n\times 2n$-matrices, but
may in practical calculations be reduced to work with tall, skinny $2n\times 8k$ matrices and matrix exponentials of  an $8k \times 8k$ and a $4k \times 4k$ matrix, respectively.
To this end, define $\bar A \in \so(2k,\R)$ by
\begin{equation*}
    \bar A := J_{2k}U^T\Delta(U^TU)^{-1}J_{2k} + (U^TU)^{-1}\Delta^TU - (U^TU)^{-1}\Delta^T J_{2n}^T U(U^TU)^{-1}J_{2k}
\end{equation*}
and define 
\begin{equation*}
    \bar H := (I_{2n}-UU^+)J_{2n}\Delta(U^TU)^{-1}J_{2k}.
\end{equation*}
With $\bar \Delta := U\bar A + \bar H \in T_U\SpSt(2n,2k)$ it holds that $\bar\Omega(\Delta) = YX^T$, where
\begin{align*}
    X := \begin{bmatrix}(I-\frac12 UU^+)\bar\Delta & -U\end{bmatrix} \in \R^{2n\times 4k}
\end{align*}
and
\begin{align*}
    Y := \begin{bmatrix} J_{2n}^T U J_{2k} & (\bar \Delta^+(I_{2n}-\frac12 UU^+))^T  \end{bmatrix} \in \R^{2n\times 4k}.
\end{align*}
This follows from \eqref{eq:barOmegafromDelta} and solving $\bar\Omega(\Delta)^T = (I_{2n}- \frac12 UU^+)\bar\Delta U^+ - U\bar\Delta^+(I_{2n}- \frac12 UU^+)$
for $\bar\Delta$, i.e., $\bar A = U^+\bar\Omega(\Delta)^TU$ and $\bar H = (I_{2n}-UU^+)\bar\Omega(\Delta)^TU$.
Furthermore, define $\hat X := \begin{bmatrix}Y & -X \end{bmatrix} \in \R^{2n\times 8k}$ and  $\hat Y := \begin{bmatrix}X & Y \end{bmatrix} \in \R^{2n\times 8k}$.

\begin{proposition}
    With notation as above, it holds that
    \begin{equation}
    \label{eq:SpStGeodesicSmall}
        \Exp^{\SpSt,g}_U(\Delta) = \hat X \expm(\hat Y^T \hat X)\begin{bmatrix}0_{4k} \\ I_{4k} \end{bmatrix}\expm\left(Y^TX
        \right)\begin{bmatrix}0_{2k}\\ I_{2k}\end{bmatrix}.
    \end{equation} 
\end{proposition}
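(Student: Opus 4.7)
The plan is to reduce each of the two matrix exponentials appearing in the closed-form geodesic \eqref{eq:RiemGeodesicStiefel} (at $t=1$), namely $\expm(\bar\Omega-\bar\Omega^T)\expm(\bar\Omega^T)U$ with $\bar\Omega=\bar\Omega(\Delta)$, by applying the low-rank identity \eqref{eq:reducedexp} to each factor in turn. The starting point is the factorization $\bar\Omega = YX^T$ (so $\bar\Omega^T = XY^T$ and $\bar\Omega-\bar\Omega^T = YX^T-XY^T = \hat X\hat Y^T$), which is exactly the one set up in the paragraph immediately preceding the statement and is verified by expanding \eqref{eq:barOmegafromDelta} together with the tangent-space identities $U^+\bar H = 0$, $U^+\bar\Delta = \bar A\in\sp(2k,\R)$, and $\bar\Delta^+ U = -\bar A$.

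For the \emph{inner} exponential $\expm(XY^T)U$, I would mimic the argument in the proof of Proposition~\ref{prop:StiefelReducedExp}. Using $U^+U = I_{2k}$ and the identities above, a direct block calculation gives
\begin{equation*}
Y^T X = \begin{bmatrix} \tfrac12 \bar A & -I_{2k} \\ \bar H^+ \bar H - \tfrac14 \bar A^2 & \tfrac12 \bar A \end{bmatrix}, \qquad Y^T U = \begin{bmatrix} I_{2k} \\ -\tfrac12 \bar A \end{bmatrix},
\end{equation*}
and solving the resulting $2\times 2$ block system shows that, whenever $\bar H^+\bar H$ is invertible, $(Y^TX)^{-1}Y^TU = \begin{bmatrix}0\\-I_{2k}\end{bmatrix}$. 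Feeding this into \eqref{eq:reducedexp} collapses the expression to $\expm(XY^T)U = X\expm(Y^TX)\begin{bmatrix}0\\-I_{2k}\end{bmatrix}$, in exact analogy with the pseudo-Riemannian case.

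For the \emph{outer} exponential I would exploit the power-series identity $\expm(\hat X\hat Y^T)\hat X = \hat X \expm(\hat Y^T\hat X)$, which follows term-by-term from $(\hat X\hat Y^T)^n\hat X = \hat X(\hat Y^T\hat X)^n$. Since $\hat X = [Y,-X]$, we have $X = -\hat X\begin{bmatrix}0\\I_{4k}\end{bmatrix}$, so
\begin{equation*}
\expm(\bar\Omega-\bar\Omega^T)\,X \;=\; \expm(\hat X\hat Y^T)\,X \;=\; -\hat X\expm(\hat Y^T\hat X)\begin{bmatrix}0\\I_{4k}\end{bmatrix}.
\end{equation*}
Combining with the inner step, the two overall minus signs cancel via $\begin{bmatrix}0\\-I_{2k}\end{bmatrix} = -\begin{bmatrix}0\\I_{2k}\end{bmatrix}$, yielding precisely \eqref{eq:SpStGeodesicSmall}.

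The main obstacle is not conceptual but rather the block-matrix bookkeeping required to verify the $2\times 2$ block form of $Y^TX$ and the factorization $\bar\Omega = YX^T$, both of which hinge on the identities $U^+\bar\Delta = \bar A$ and $(UU^+)^2 = UU^+$. The remaining subtlety is the possible failure of $\bar H^+\bar H$ (and hence of $Y^TX$) to be invertible; this is handled exactly as in the remark after Proposition~\ref{prop:StiefelReducedExp}, namely by a density/continuity argument, since the right-hand side of \eqref{eq:SpStGeodesicSmall} contains no inverses and depends continuously on $\Delta$.
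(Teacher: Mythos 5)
Your proposal is correct and follows essentially the same route as the paper: factor $\bar\Omega(\Delta) = YX^T$ and $\bar\Omega(\Delta)-\bar\Omega(\Delta)^T = \hat X\hat Y^T$, collapse each exponential acting on $U$ (resp.\ on $X$) using the low-rank identity \eqref{eq:reducedexp} together with $U = X\begin{bsmallmatrix}0\\-I_{2k}\end{bsmallmatrix}$ and $X = \hat X\begin{bsmallmatrix}0\\-I_{4k}\end{bsmallmatrix}$, and let the two signs cancel. The only (harmless) deviations are that you treat the outer factor via the series identity $\expm(\hat X\hat Y^T)\hat X = \hat X\expm(\hat Y^T\hat X)$ instead of a second application of \eqref{eq:reducedexp} — which actually avoids inverting $\hat Y^T\hat X$, and could just as well be used for the inner factor (since $U=X\begin{bsmallmatrix}0\\-I_{2k}\end{bsmallmatrix}$), rendering your block inversion of $Y^TX$ and the closing density/continuity argument for singular $\bar H^+\bar H$ unnecessary — whereas the paper simply applies \eqref{eq:reducedexp} twice and leaves the invertibility caveat implicit.
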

\begin{proof}
    First, note that since $\bar\Omega(\Delta) = YX^T$, it holds that $\expm(\bar\Omega(\Delta)^T)U = \expm(XY^T)U$. We make use of \eqref{eq:reducedexp}, which implies
    \begin{align*}
        \expm(XY^T)U = U + X(\expm(Y^TX)- I_k)(Y^TX)^{-1}Y^TU.
    \end{align*}
    Since $(Y^TX)^{-1}Y^TU = (Y^TX)^{-1}Y^TX\begin{bsmallmatrix} 0_{2k}\\-I_{2k}\end{bsmallmatrix} = \begin{bsmallmatrix} 0_{2k}\\-I_{2k}\end{bsmallmatrix}$, the simplified expression $\expm(XY^T)U = X\expm(Y^TX)\begin{bsmallmatrix} 0_{2k}\\-I_{2k}\end{bsmallmatrix}$ follows.
    
    Secondly, it holds that $\expm(\bar\Omega(\Delta)-\bar\Omega(\Delta)^T) = \expm(\hat X \hat Y^T)$. Repeating the steps above and noting $(\hat Y^T \hat X)^{-1}\hat Y^T X = \begin{bsmallmatrix}0_{4k} \\ -I_{4k} \end{bsmallmatrix}$ leads to the claimed result.
\end{proof}

\section{The real symplectic Grassmann manifold}
\label{sec:Grassmann}

Similar to the usual Grassmann manifold \cite{BatziesHueperMachadoLeite2015,EdelmanAriasSmith1999} of linear subspaces of a fixed dimension, we define the \emph{real symplectic Grassmann manifold} as the manifold of symplectic subspaces of dimension $2k$ of $(\R^{2n},\omega_0)$. This must not be confused with the Lagrangian Grassmannian, the manifold of Lagrangian subspaces, which is also referred to as the symplectic Grassmann manifold by some authors. The quotient manifold approach which we use is similar to the course of action in \cite{BendokatZimmermannAbsil2020}. As in the case of linear subspaces, we identify a symplectic subspace with the associated symplectic projection onto it.

\begin{proposition}
\label{prop:Grassmann_manifold_structure}
The set
\begin{equation}
\label{eq:SpGr}
    \SpGr(2n,2k) := \{P \in \R^{2n \times 2n} \mid P^2=P,\ \rank(P)=2k,\ P^+=P\}
\end{equation}
consists of the symplectic projections onto the $2k$-dimensional symplectic subspaces of the standard symplectic space $(\R^{2n},\omega_0)$. It has a smooth manifold structure and is called the \emph{real symplectic Grassmann manifold}. It features the quotient representation
\begin{equation}
\SpGr(2n,2k)\cong\Sp(2n,\R)/(\Sp(2k,\R)\times \Sp(2(n-k),\R))
\end{equation}
and has dimension
\begin{equation*}
 \dim\SpGr(2n,2k) = 4(n-k)k.
\end{equation*} 
\end{proposition}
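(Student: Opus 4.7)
The plan is to parallel the three-part argument used for the Stiefel manifold: first, identify the matrices in $\SpGr(2n,2k)$ with $2k$-dimensional symplectic subspaces of $(\R^{2n},\omega_0)$; second, realize this set as the orbit of a standard projection under a smooth, transitive $\Sp(2n,\R)$-action by conjugation; third, compute the stabilizer of the standard projection and invoke the quotient manifold theorem.

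For the identification, the key observation is that $P^+ = P$ is equivalent to the $\omega_0$-self-adjointness $\omega_0(Px,y) = \omega_0(x,Py)$ for all $x,y \in \R^{2n}$. Combined with $P^2=P$, this gives $\omega_0(Pv,w) = \omega_0(v,Pw) = 0$ whenever $w \in \ker(P)$, so $\ker(P) \subseteq \mathrm{range}(P)^\perp$. A dimension count using the nondegeneracy of $\omega_0$ upgrades this inclusion to an equality. In particular $\mathrm{range}(P) \cap \mathrm{range}(P)^\perp = \{0\}$, showing that $\mathrm{range}(P)$ is a $2k$-dimensional symplectic subspace and that $P$ is the unique symplectic projection onto it. Conversely, each $2k$-dimensional symplectic subspace $\Space{U}$ yields a unique such $P$ via the splitting $\R^{2n} = \Space{U} \oplus \Space{U}^\perp$.

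Next I define the action $(M,P) \mapsto MPM^+$. Smoothness is immediate (it is polynomial in the entries), and $\SpGr(2n,2k)$ is preserved: rank and idempotency are conjugation-invariant, while $(MPM^+)^+ = MPM^+$ follows from $M^+ = M^{-1}$. For transitivity I take $P_0 := EE^+$, whose range is the standard symplectic $\R^{2k}$ sitting in the first $k$ and the $(n{+}1)$-th through $(n{+}k)$-th coordinates. Given any $P \in \SpGr(2n,2k)$, I combine a symplectic basis of $\mathrm{range}(P)$ with one of $\mathrm{range}(P)^\perp$ (both exist by the linear Darboux theorem) to obtain a symplectic basis of $\R^{2n}$; the associated change-of-basis matrix lies in $\Sp(2n,\R)$ and sends $P_0$ to $P$.

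The main obstacle, and the final step, is identifying the stabilizer of $P_0$. An element $M \in \Sp(2n,\R)$ lies in $\stab_{P_0}$ iff it commutes with $P_0$, iff it preserves both $\mathrm{range}(P_0)$ and $\ker(P_0) = \mathrm{range}(P_0)^\perp$. Since those two subspaces are mutually symplectically orthogonal and decompose $\R^{2n}$, any such $M$ splits as a direct sum $M_1 \oplus M_2$, and the condition $M \in \Sp(2n,\R)$ restricts to each block preserving the induced symplectic form on its summand (the cross terms vanish because the summands pair trivially under $\omega_0$). Identifying the two summands with the standard $(\R^{2k},\omega_0)$ and $(\R^{2(n-k)},\omega_0)$ respectively yields $\stab_{P_0} \cong \Sp(2k,\R) \times \Sp(2(n-k),\R)$, a closed subgroup. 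I then conclude exactly as in the Stiefel proposition by invoking \cite[Theorems 21.18 and 21.20]{Lee2012smooth}, which produces both the quotient manifold structure and the dimension formula $(2n+1)n - (2k+1)k - (2(n-k)+1)(n-k) = 4(n-k)k$.
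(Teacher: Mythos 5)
Your proposal is correct, and it follows the paper's overall skeleton (realize $\SpGr(2n,2k)$ as the orbit of $E_0=EE^+$ under $M\cdot P = MPM^+$, compute the stabilizer $\cong \Sp(2k,\R)\times\Sp(2(n-k),\R)$, and invoke \cite[Theorems 21.18 and 21.20]{Lee2012smooth} plus the dimension count). Where you genuinely diverge is in the central step, namely the identification of the elements of $\SpGr(2n,2k)$ with symplectic subspaces and the transitivity of the action. The paper argues constructively with matrices: it applies the Schur-like decomposition of the skew-symmetric matrix $PJ_{2n}P^T$ from \cite{Xu2003} to produce an explicit representative $U=QI_{2n,2k}\diag(\Sigma,\Sigma)\in\SpSt(2n,2k)$ with $P=UU^+$; this factorization is then reused later (it underlies the subsequent corollary decomposing symplectic Stiefel matrices as $Y\diag(\Sigma,\Sigma)N$). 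You instead argue abstractly: $P^+=P$ is $\omega_0$-self-adjointness, which together with $P^2=P$ and a dimension count gives $\ker P=(\mathrm{range}\,P)^\perp$, so $\mathrm{range}\,P$ is symplectic and $P$ is the unique symplectic projection onto it; transitivity then comes from the linear Darboux theorem by assembling symplectic bases of $\mathrm{range}\,P$ and of its symplectic complement (you implicitly use the standard fact that the symplectic complement of a symplectic subspace is again symplectic, which you should state). Your route is more elementary and self-contained, avoids the external matrix decomposition, and proves the ``consists of the symplectic projections onto symplectic subspaces'' claim more explicitly than the paper does; the paper's route buys an explicit, numerically meaningful representative and a decomposition that is exploited again afterwards. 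Your invariant-subspace description of the stabilizer is equivalent to the paper's block-matrix computation, and the dimension arithmetic matches.
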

\begin{proof}
We show first that the thus defined space $\SpGr(2n,2k)$ is the orbit of
\begin{equation*}
    E_0:=EE^+
\end{equation*}
under the group action of $\Sp(2n,\R)$ defined by
\begin{equation}
\label{eq:GrassmannGroupAction}
    \phi\colon \Sp(2n,\R) \times \R^{2n \times 2n},\ (M,X) \mapsto MXM^+.
\end{equation}
Because every $U\in\SpSt(2n,2k)$ has a representation $U=ME$, $M\in \Sp(2n,\R)$,
it is sufficient to show that every $P \in \SpGr(2n,2k)$ is equal to $P = UU^+$ for some $U \in \SpSt(2n,2k)$. 
This fact is established as follows:
Since $PJ_{2n}P^T$ is skew-symmetric, it features a `Schur-like decomposition' \cite[eq. (5)]{Xu2003} 
of the form
\begin{equation*}
    PJ_{2n}P^T = Q \begin{bsmallmatrix} 0 & \Sigma^2 & 0\\ -\Sigma^2 &  0 & 0\\ 0 & 0 & 0 \end{bsmallmatrix}Q^T,
\end{equation*}
where $Q \in \O(2n)$ is a real orthogonal matrix.
Moreover $\Sigma = \diag(\sigma_1,\dots,\sigma_k)$, where $\sigma_i > 0$ for all $i=1,\dots,k$, because $\rank(P) = 2k$ \cite[Proposition 3]{Xu2003}. From $P^+ = P = P^2$, it follows that $P =  Q \begin{bsmallmatrix} 0 & \Sigma^2 & 0\\ -\Sigma^2 &  0 & 0\\ 0 & 0 & 0 \end{bsmallmatrix}Q^TJ_{2n}^T$.
For $U := Q I_{2n,2k} \begin{bsmallmatrix} \Sigma & 0\\ 0 & \Sigma\end{bsmallmatrix} \in \R^{2n\times 2k}$, with $I_{2n,2k}$ as in \eqref{eq:I_nk}, it furthermore holds that $P = UU^+$.
The fact that $U^+U=I_{2k}$, i.e.,  $U \in \SpSt(2n,2k)$, follows from $P^2 = P$. 
The other inclusion, i.e., $\phi(M,E_0) \in \SpGr(2n,2k)$ for all $M \in \Sp(2n,\R)$ is immediate.
The stabilizer of the group action $\phi(\cdot,E_0)$ is given by
\begin{equation*}
\begin{split}
    \stab_{E_0}&=\{M \in \Sp(2n,\R) \mid ME_0M^+=E_0\}\\
    &=\left\lbrace
    \begin{bsmallmatrix}
    A_1 & 0 & B_1 & 0\\
    0 & A_2 & 0 & B_2\\
    C_1 & 0 & D_1 & 0\\
    0 & C_2 & 0 & D_2
    \end{bsmallmatrix}
    \in \Sp(2n,\R)\right\rbrace,
\end{split}
\end{equation*}
where $\begin{bsmallmatrix} A_1 & B_1\\ C_1 & D_1\end{bsmallmatrix} \in \Sp(2k,\R)$ and $\begin{bsmallmatrix} A_2 & B_2\\ C_2 & D_2\end{bsmallmatrix} \in \Sp(2(n-k),\R)$. Hence, \begin{equation*}
\stab_{E_0}\cong \Sp(2k,\R) \times \Sp(2(n-k),\R).                                                                                                                                                                                     \end{equation*} 
The manifold structure now follows from \cite[Theorem 21.20]{Lee2012smooth}. The real symplectic Grassmann manifold is by \cite[Theorem 21.18]{Lee2012smooth} diffeomorphic to the homogeneous space
\begin{equation*}
\SpGr(2n,2k)\cong \Sp(2n,\R)/(\Sp(2k,\R)\times \Sp(2(n-k),\R)).
\end{equation*}
The dimension of $\SpGr(2n,2k)$ is obtained via the standard formula
 \begin{align*}
  \dim\SpGr(2n,2k) &= \dim\Sp(2n,\R) - \dim\Sp(2k, \R) \cdot \dim\Sp(2(n-k),\R)\\
  & =  4(n-k)k.
 \end{align*}
\end{proof}

Note that the real symplectic Grassmann manifold $\SpGr(2n,2k)$ has the same dimension as the Grassmann manifold $\Gr(2n,2k)$. The manifolds are not the same however, since not every $2k$ dimensional subspace of $\R^{2n}$ is also a symplectic subspace of the standard symplectic space $(\R^{2n},\omega_0)$.

Similarly to the Grassmann case \cite{BatziesHueperMachadoLeite2015}, for every $P \in \SpGr(2n,2k)$, we can define a set
\begin{equation*}
\begin{split}
 \sp_P(2n) &:= \left\lbrace \tilde\Omega \in \sp(2n,\R) \ \middle|\ \tilde \Omega = \tilde\Omega P + P\tilde\Omega \right\rbrace\\
 &= \left\lbrace M\Omega M^+ \in \sp(2n,\R) \ \middle|\ P = M E_0 M^+,\ \Omega \in \sp_{E_0}(2n) \right\rbrace .
\end{split}
\end{equation*}

The tangent space of $\SpGr(2n,2k)$ at $P$ is characterized by the following proposition.
\begin{proposition}
\label{prop:TangentSpaceGrassmann}
    Let $P\in \SpGr(2n,2k)$. The tangent space at $P$ is given by
    \begin{equation}
    \begin{split}
        T_P\SpGr(2n,2k)&=\{ [\Omega,P] \mid \Omega \in \sp(2n,\R)\}\\
        &=\{ [\tilde\Omega,P] \mid \tilde\Omega \in \sp_P(2n) \}.
    \end{split}
    \end{equation}
\end{proposition}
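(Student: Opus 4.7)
The plan is to exploit the quotient representation $\SpGr(2n,2k) \cong \Sp(2n,\R)/\stab_{E_0}$ from Proposition~\ref{prop:Grassmann_manifold_structure} to describe tangent vectors as images of Lie algebra elements under the differential of the orbit map.

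First, fix $P \in \SpGr(2n,2k)$ and consider the smooth orbit map $\phi_P \colon \Sp(2n,\R) \to \SpGr(2n,2k)$, $M \mapsto MPM^+$. Since the group action \eqref{eq:GrassmannGroupAction} is transitive on $\SpGr(2n,2k)$ and $\SpGr(2n,2k)$ is endowed with the unique smooth manifold structure that makes this action smooth (via \cite[Theorem 21.20]{Lee2012smooth}), the orbit map is a surjective submersion. Hence every tangent vector at $P$ is obtained by differentiating a curve $t\mapsto M(t)PM(t)^+$ with $M(0)=I_{2n}$ at $t=0$. For such a curve with $\dot M(0)=\Omega \in \sp(2n,\R)$, the differential is
\begin{equation*}
    \D\phi_P|_{I_{2n}}(\Omega) = \Omega P + P \Omega^+ = \Omega P - P\Omega = [\Omega,P],
\end{equation*}
where I used $\Omega^+ = -\Omega$ for $\Omega \in \sp(2n,\R)$. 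This establishes the first description $T_P\SpGr(2n,2k) = \{[\Omega,P] \mid \Omega \in \sp(2n,\R)\}$.

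For the second description, the inclusion $\{[\tilde\Omega,P] \mid \tilde\Omega \in \sp_P(2n)\} \subseteq \{[\Omega,P] \mid \Omega \in \sp(2n,\R)\}$ is immediate from $\sp_P(2n)\subseteq \sp(2n,\R)$. To establish the reverse inclusion, for any $\Omega \in \sp(2n,\R)$ define
\begin{equation*}
    \tilde\Omega := P\Omega(I_{2n}-P) + (I_{2n}-P)\Omega P.
\end{equation*}
I would then verify three short properties by direct computation, using $P^2=P$ and $P^+=P$:
\begin{enumerate}
\item $\tilde\Omega^+ = -\tilde\Omega$, so that $\tilde\Omega \in \sp(2n,\R)$;
\item $P\tilde\Omega P = 0$ and $(I_{2n}-P)\tilde\Omega(I_{2n}-P)=0$, which together imply $\tilde\Omega = \tilde\Omega P + P\tilde\Omega$, i.e. $\tilde\Omega \in \sp_P(2n)$;
\item $[\tilde\Omega,P] = (I_{2n}-P)\Omega P - P\Omega(I_{2n}-P) = [\Omega,P]$.
\end{enumerate}
Property~3 is the key identity: it follows by splitting $[\Omega,P]$ using $\Omega = P\Omega P + P\Omega(I_{2n}-P) + (I_{2n}-P)\Omega P + (I_{2n}-P)\Omega(I_{2n}-P)$ and observing that only the off-diagonal blocks contribute to the commutator with $P$.

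The argument is essentially formal; the main point to watch is that the submersion/homogeneous space machinery from \cite{Lee2012smooth} genuinely applies in the pseudo-Riemannian symplectic setting, which it does because the manifold structure was installed in Proposition~\ref{prop:Grassmann_manifold_structure} precisely so that $\phi_P$ becomes smooth and has constant rank. The ``hardest'' concrete step is checking the equivalence between the two definitions of $\sp_P(2n)$ given in the paragraph preceding the proposition, but this is already built into the statement, so within this proof it only remains to construct the explicit $\tilde\Omega$ above to translate between the two descriptions.
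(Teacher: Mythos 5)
Your proposal is correct and takes essentially the same approach as the paper: the first equality comes from differentiating curves generated by the group action, and your choice $\tilde\Omega = P\Omega(I_{2n}-P)+(I_{2n}-P)\Omega P$ expands to exactly the paper's $\tilde\Omega = \Omega P + P\Omega - 2P\Omega P$, with the same verification that $\tilde\Omega \in \sp_P(2n)$ and $[\tilde\Omega,P]=[\Omega,P]$.
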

    \begin{proof}
     The first equality follows by a straightforward calculation, as every tangent vector is the derivative of a curve defined via \eqref{eq:GrassmannGroupAction}. The second equality follows from $\{ [\tilde\Omega,P] \mid \tilde\Omega \in \sp_P(2n) \} \subset \{ [\Omega,P] \mid \Omega \in \sp(2n,\R)\}$ and the fact that for every $\Omega \in \sp(2n,\R)$, it holds that $\tilde\Omega := \Omega P + P \Omega - 2P \Omega P \in \sp_P(2n)$ and $[\Omega, P] = [\tilde\Omega,P]$.
    \end{proof}

\subsection{Pseudo-Riemannian metric on \texorpdfstring{$\SpGr(2n,2k)$}{SpGr(2n,2k)}}

We can connect the real symplectic Stiefel manifold, i.e. the manifold of symplectic bases, with the real symplectic Grassmann manifold in the following way.
\begin{proposition}
 The map
 \begin{equation}
\label{eq:projStGr}
\rho\colon \SpSt(2n,2k) \to \SpGr(2n,2k), \hspace{0.2cm}
U \mapsto \rho(U):=UU^+
\end{equation}
is a surjective submersion. Every tangent space $T_U\SpSt(2n,2k)$ splits into a vertical and horizontal part with respect to $\rho$ and the pseudo-Riemannian metric $h^\SpSt_U$, namely $T_U\SpSt(2n,2k)= \Ver^\rho_U\SpSt(2n,2k) \oplus \Hor^{\rho,h}_U\SpSt(2n,2k)$, where
\begin{equation*}
\label{eq:VerStiefel}
    \Ver^\rho_U\SpSt(2n,2k) := \ker \D \rho_U = \{ UA \mid A \in \sp(2k,\R) \}
\end{equation*}
and
\begin{equation*}
    \Hor^{\rho,h}_U\SpSt(2n,2k) := (\ker \D \rho_U)^{\perp,h^\SpSt_U} = \{ U^sB \mid B \in \R^{2(n-k)\times 2k} \}.
\end{equation*} 
\end{proposition}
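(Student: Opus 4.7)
The plan is to prove the proposition in three steps: surjectivity of $\rho$ as a map, surjectivity of $\D\rho_U$ at every point (the submersion property), and the claimed orthogonal decomposition of $T_U\SpSt(2n,2k)$. Set-theoretic surjectivity of $\rho$ is essentially already in hand: the first part of the proof of Proposition~\ref{prop:Grassmann_manifold_structure} constructs, for every $P\in\SpGr(2n,2k)$, a $U\in\SpSt(2n,2k)$ with $P=UU^+$, which is exactly the statement $\rho(U)=P$.

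For the submersion property, I would differentiate $\rho(U)=UU^+$ along a curve and use the linearity of the symplectic inverse to obtain $\D\rho_U(\Delta)=\Delta U^+ + U\Delta^+$. Plugging in the tangent parameterization $\Delta=UA+U^sB$ from \eqref{eq:TangentSpaceStiefel}, with $A\in\sp(2k,\R)$ (so $A^+=-A$), and using the identities $U^+U=I_{2k}$, $(U^s)^+U^s=I_{2(n-k)}$, $U^+U^s=0$ and $(U^s)^+U=0$ should reduce the expression to
\begin{equation*}
    \D\rho_U(\Delta) = U^s B U^+ + U B^+ (U^s)^+ .
\end{equation*}
Left-multiplying by $(U^s)^+$ and right-multiplying by $U$ then recovers $B$, so $\D\rho_U(\Delta)=0$ forces $B=0$, which gives $\ker\D\rho_U=\{UA \mid A\in\sp(2k,\R)\}$, a subspace of dimension $(2k+1)k$. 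Combined with $\dim T_U\SpSt(2n,2k)=(4n-2k+1)k$, this yields $\dim\mathrm{im}\,\D\rho_U=4(n-k)k=\dim\SpGr(2n,2k)$, so $\D\rho_U$ is surjective at every $U$ and the vertical space is as claimed.

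For the horizontal space I would invoke the metric formula from Proposition~\ref{prop:pRmetricSt}, namely $h^\SpSt_U(UA_1+U^sB_1,\,UA_2+U^sB_2)=\tfrac12\tr(A_1^+A_2)+\tr(B_1^+B_2)$. This makes $\{U^sB\}$ manifestly $h^\SpSt_U$-orthogonal to $\{UA\}$, and \eqref{eq:TangentSpaceStiefel} already tells us it is a linear complement. The main obstacle is that $h^\SpSt_U$ is indefinite, so orthogonal complements need not be complementary. I would handle this by establishing that $h^\SpSt_U$ restricts to a non-degenerate form on the vertical space: on $\{UA\}$ the restriction is $(A_1,A_2)\mapsto \tfrac12\tr(A_1^+A_2)=-\tfrac12\tr(A_1A_2)$, which is non-degenerate on $\sp(2k,\R)$ because $\sp(2k,\R)$ is semi-simple and its Killing form is a non-zero multiple of the trace form $\tr(XY)$. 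From this, $\dim(\ker\D\rho_U)^{\perp,h^\SpSt_U}=\dim T_U\SpSt(2n,2k)-(2k+1)k=4(n-k)k=\dim\{U^sB\}$, and the already-verified inclusion $\{U^sB\}\subset(\ker\D\rho_U)^{\perp,h^\SpSt_U}$ forces equality, completing the decomposition.
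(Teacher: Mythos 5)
Your proposal is correct, and it differs from the paper's proof mainly in how the submersion property is obtained and in how much of the splitting is actually proved. The paper establishes surjectivity of $\D\rho_U$ constructively: given $[\Omega,P]\in T_P\SpGr(2n,2k)$ with $\Omega\in\sp(2n,\R)$, it checks that $\Delta:=\Omega U\in T_U\SpSt(2n,2k)$ and that $\D\rho_U(\Delta)=\Delta U^+ + U\Delta^+=[\Omega,P]$, so every Grassmann tangent vector has an explicit preimage; the vertical/horizontal decomposition is then dismissed as a ``standard construction.'' You instead compute $\ker\D\rho_U$ exactly from the parameterization $\Delta=UA+U^sB$ and deduce surjectivity by rank--nullity against $\dim\SpGr(2n,2k)=4(n-k)k$; this avoids the commutator description of $T_P\SpGr(2n,2k)$ from Proposition~\ref{prop:TangentSpaceGrassmann} but leans on the dimension count of Proposition~\ref{prop:Grassmann_manifold_structure}. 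The paper's preimage construction buys something it reuses immediately afterwards---the horizontal lift $\Gamma^{\hor}_U=\Gamma U$ is precisely that preimage---whereas your route yields the vertical space $\{UA\mid A\in\sp(2k,\R)\}$ as a byproduct rather than an assertion, and you supply the step the paper omits: in the indefinite setting you verify that $\{U^sB\}$ is the $h^{\SpSt}_U$-orthogonal complement and genuinely complementary, using non-degeneracy of the trace form on $\sp(2k,\R)$ (which one can also see directly, since $X\in\sp(2k,\R)$ implies $X^T\in\sp(2k,\R)$, so $\tr(XX^T)=0$ forces $X=0$) together with the matching dimension count. Both arguments are sound; yours is more complete on the splitting, while the paper's is leaner and feeds directly into the later lift formula.
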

\begin{proof}
This is a standard construction. We only show that $\rho$ is a surjective submersion. As $\SpGr(2n,2k)$ is the orbit of $E_0$ under the group action $\phi$ of \eqref{eq:GrassmannGroupAction}, for every $P \in \SpGr(2n,2k)$ there is $M \in \Sp(2n,\R)$  such that $P = MEE^+M^+$. Defining $U = ME \in \SpSt(2n,2k)$ shows that $P = \rho(U)$ and therefore that the map $\rho$ is surjective. To show that $\rho$ is a submersion, we show that the differential $\D \rho_U$ is surjective for every $U \in \SpSt(2n,2k)$: Let $P = UU^+ \in \SpGr(2n,2k)$ and $[\Omega,P] \in T_P\SpGr(2n,2k)$. Then $\Delta := \Omega U \in T_U\SpSt(2n,2k)$ and $\D \rho_U(\Delta) = \Delta U^+ + U\Delta^+ =\Omega UU^+ - UU^+\Omega = [\Omega,P]$.
\end{proof}
Let $M \in \Sp(2n,\R)$ such that $P=ME_0 M^+$ and define $U=ME \in \SpSt(2n,2k)$ and $U^s=ME^s$. Then $P=UU^+$, and it follows from the preceding proposition that $T_P\SpGr(2n,2k)$ can be identified with $\Hor^{\rho,h}_U\SpSt(2n,2k)$ via the horizontal lift. For $\Gamma \in T_P\SpGr(2n,2k)$, this horizontal lift is explicitly given by
\begin{equation*}
 \Gamma^\hor_U =  \Gamma U \in \Hor^{\rho,h}_U\SpSt(2n,2k),
\end{equation*}
as can be seen by the fact that there is $\Omega \in \sp(2n,\R)$ such that $\Gamma = [\Omega, UU^+]$ and
\begin{equation*}
\begin{split}
    \D \rho_U(\Gamma U) &= \Gamma UU^+ + UU^+\Gamma^+ = \Omega UU^+ - UU^+\Omega UU^+ + UU^+\Omega^+ - UU^+\Omega^+ UU^+\\
    &= \Omega UU^+ - UU^+\Omega= [\Omega, UU^+] = \Gamma.
\end{split}
\end{equation*}

By making use of the horizontal lift to $\SpSt(2n,2k)$, we can define a pseudo-Riemannian metric on the real symplectic Grassmann manifold $\SpGr(2n,2k)$.
\begin{proposition}
 Let $\Gamma_1,\Gamma_2 \in T_P\SpGr(2n,2k)$ and $U \in \rho^{-1}(P)$. There is $B_i \in \R^{2(n-k)\times 2k}$ such that ${(\Gamma_i)}^\hor_U=U^s B_i$, i=1,2. The mapping $g^\SpGr_P\colon T_P\SpGr(2n,2k) \times T_P\SpGr(2n,2k) \to \R$,
    \begin{equation}
    \label{eq:pRmetricGr}
        g^\SpGr_P(\Gamma_1,\Gamma_2) := h^\SpSt_U\left((\Gamma_1)^\hor_U,(\Gamma_2)^\hor_U\right)
        = \tr\left(U^+\Gamma_1^+\Gamma_2U\right) = \tr(B_1^+B_2)
    \end{equation} 
 defines point-wise a pseudo-Riemannian metric on $\SpGr(2n,2k)$. 
\end{proposition}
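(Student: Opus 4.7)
The plan is to deduce the pseudo-Riemannian properties of $g^\SpGr_P$ from those of $h^\SpSt_U$ via the horizontal lift. Bilinearity and symmetry descend for free from Proposition \ref{prop:pRmetricSt}, and smoothness in $P$ follows from standard quotient-manifold considerations once well-definedness is settled. The two genuinely nontrivial items are (i) independence of the chosen representative $U \in \rho^{-1}(P)$ in the horizontal lift, and (ii) non-degeneracy.

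I would first verify the two alternative formulas in \eqref{eq:pRmetricGr}, which are the key tools for the remaining steps. Using the identification $(\Gamma_i)^\hor_U = \Gamma_i U$ recorded just before the proposition, together with the general rule $(ABC)^+ = C^+B^+A^+$ (a direct consequence of the definition of the symplectic inverse), the parametrization of $h^\SpSt_U$ from Proposition \ref{prop:pRmetricSt} gives
\begin{equation*}
    h^\SpSt_U(\Gamma_1 U,\Gamma_2 U) = \tr\bigl(U^+\Gamma_1^+(I_{2n}-\tfrac12 UU^+)\Gamma_2 U\bigr).
\end{equation*}
Since $\Gamma_i U = U^sB_i$ lies in $\Hor^{\rho,h}_U\SpSt(2n,2k)$ and $U^+U^s = 0$, the contribution involving $UU^+$ vanishes, leaving $\tr(U^+\Gamma_1^+\Gamma_2 U)$. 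The alternative expression $\tr(B_1^+B_2)$ is then obtained by substituting $A_i = 0$ into the $(A,B)$-form $\tfrac12\tr(A_1^+A_2)+\tr(B_1^+B_2)$ of Proposition \ref{prop:pRmetricSt}.

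The closed formula $\tr(U^+\Gamma_1^+\Gamma_2 U)$ makes well-definedness transparent, which I expect to be the main obstacle in principle. The fibre of $\rho$ through $U$ is the orbit $\{UR\mid R\in\Sp(2k,\R)\}$, and for a second representative $U' = UR$ the cyclic trace together with $RR^+ = I_{2k}$ yield
\begin{equation*}
    \tr\bigl((U')^+\Gamma_1^+\Gamma_2 U'\bigr) = \tr\bigl(R^+U^+\Gamma_1^+\Gamma_2 UR\bigr) = \tr\bigl(U^+\Gamma_1^+\Gamma_2 U\bigr),
\end{equation*}
so $g^\SpGr_P$ depends on $P = UU^+$ alone.

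For non-degeneracy, I would exploit the $(A,B)$-form $\tr(B_1^+B_2)$ together with the horizontal-lift isomorphism $T_P\SpGr(2n,2k)\to\R^{2(n-k)\times 2k}$, $\Gamma\mapsto B$. Given $B_1\neq 0$, I would test against $B_2 := J_{2(n-k)}^T B_1 J_{2k} \in \R^{2(n-k)\times 2k}$; using $J_{2m}J_{2m}^T = I_{2m}$ (which follows from $J^T = -J$ and $J^2 = -I$), a short computation gives $\tr(B_1^+B_2) = \|B_1\|_F^2 > 0$. Hence $g^\SpGr_P(\Gamma_1,\cdot)\not\equiv 0$ whenever $\Gamma_1\neq 0$, completing the verification that $g^\SpGr_P$ is a smooth, symmetric, non-degenerate bilinear form on each tangent space, i.e., a pseudo-Riemannian metric.
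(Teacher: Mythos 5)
Your proof is correct and follows essentially the route the paper intends: the paper's own proof is just the remark ``Similar to Proposition~\ref{prop:pRmetricSt}'', i.e., a direct calculation of the identities in \eqref{eq:pRmetricGr} together with inheriting the metric properties from $h^\SpSt$ via the horizontal lift. Your explicit checks of representative-independence along the fibre $\{UR \mid R \in \Sp(2k,\R)\}$ and of non-degeneracy via the test vector $B_2 = J_{2(n-k)}^T B_1 J_{2k}$ simply fill in details the paper leaves implicit, and both computations are sound.
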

 \begin{proof}
  Similar to Proposition~\ref{prop:pRmetricSt}.
 \end{proof}
In contrast to $\SpSt(2n,2k)$, which is a naturally reductive space, $\SpGr(2n,2k)$ is even \emph{symmetric} with respect to the pseudo-Riemannian metric \eqref{eq:pRmetricGr}. 
To see this, let $X= \diag(-I_k,I_{n-k},-I_k,I_{n-k})$ block-diagonal
and observe that the involutive automorphism $\sigma\colon \Sp(2n,\R) \to \Sp(2n,R)$, $\sigma(M)=XMX$ fulfills \cite[Theorem 11.29]{ONeill1983}.
By \cite[Lemma 11.24]{ONeill1983}, it therefore holds that $\rho \circ \pi$ is a pseudo-Riemannian submersion with respect to \eqref{eq:pRmetricGr}, since any symmetric space is naturally reductive \cite[p. 317]{ONeill1983}.

The connection between $\SpSt(2n,2k)$ and $\SpGr(2n,2k)$ allows us to state the following decomposition of real symplectic Stiefel matrices.
\begin{corollary}
    Every $U \in \SpSt(2n,2k)$ is of the form
    \begin{equation*}
        U = Y\diag(\Sigma,\Sigma)N,
    \end{equation*}
    where $N \in \Sp(2k,\R)$, $\Sigma = \diag(\sigma_1,\dots,\sigma_k)$ with $\sigma_i > 0$ for all $i = 1,\dots, k$ and $Y \in \St(2n,2k)$ fulfills $Y^+Y = \diag(\Sigma,\Sigma)^{-2}$.
\end{corollary}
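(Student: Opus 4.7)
The plan is to leverage the Grassmann decomposition established in the proof of Proposition~\ref{prop:Grassmann_manifold_structure} together with the fact, implicit in the definition of $\rho$ from \eqref{eq:projStGr}, that the fibers of $\rho\colon \SpSt(2n,2k) \to \SpGr(2n,2k)$ are exactly the orbits of the right $\Sp(2k,\R)$-action.

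\textbf{Step 1: Pass to the Grassmann manifold.} Given $U \in \SpSt(2n,2k)$, set $P := UU^+ = \rho(U) \in \SpGr(2n,2k)$. By the argument in the proof of Proposition~\ref{prop:Grassmann_manifold_structure}, the skew-symmetric matrix $PJ_{2n}P^T$ admits a Schur-like decomposition that produces an orthogonal $Q \in \O(2n)$ and positive diagonal entries $\sigma_1,\dots,\sigma_k$ such that
\begin{equation*}
    \tilde U := Q I_{2n,2k}\diag(\Sigma,\Sigma) \in \SpSt(2n,2k), \qquad \tilde U \tilde U^+ = P.
\end{equation*}
Setting $Y := Q I_{2n,2k}$, the columns of $Y$ form the first $2k$ columns of an orthogonal matrix, so $Y \in \St(2n,2k)$, and $\tilde U = Y\diag(\Sigma,\Sigma)$.

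\textbf{Step 2: Verify the $Y^+Y$ condition.} Since $\tilde U \in \SpSt(2n,2k)$, we have $I_{2k} = \tilde U^+ \tilde U = \diag(\Sigma,\Sigma)\, Y^+ Y\, \diag(\Sigma,\Sigma)$, and since $\diag(\Sigma,\Sigma)$ is invertible, this rearranges to $Y^+Y = \diag(\Sigma,\Sigma)^{-2}$, as required.

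\textbf{Step 3: Connect $U$ and $\tilde U$ by a symplectic change of basis.} Because $UU^+ = \tilde U \tilde U^+$, define
\begin{equation*}
    N := \tilde U^+ U \in \R^{2k \times 2k}.
\end{equation*}
Then $N^+ N = U^+ \tilde U \tilde U^+ U = U^+(UU^+)U = I_{2k}$, so $N \in \Sp(2k,\R)$. Moreover, $\tilde U N = \tilde U \tilde U^+ U = UU^+ U = U$. Combining the three steps gives $U = Y\diag(\Sigma,\Sigma)N$ with the claimed properties. The only mildly delicate point is Step~3, where one must carefully distinguish the symplectic inverse from the matrix transpose; otherwise the proof is routine bookkeeping using $(\cdot)^+(\cdot) = I_{2k}$ on $\SpSt(2n,2k)$.
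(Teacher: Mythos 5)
Your proposal is correct and follows essentially the same route as the paper: pass to $P = UU^+$, invoke the Schur-like decomposition from the proof of Proposition~\ref{prop:Grassmann_manifold_structure} to get $\tilde U = Y\diag(\Sigma,\Sigma) \in \SpSt(2n,2k)$ with $\tilde U\tilde U^+ = P$, and conclude $U = \tilde U N$ with $N \in \Sp(2k,\R)$. The only difference is that you make the last step explicit by exhibiting $N = \tilde U^+ U$ and verifying $N^+N = I_{2k}$, a detail the paper leaves implicit, and your verification (using $(\tilde U^+U)^+ = U^+\tilde U$) is correct.
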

\begin{proof}
    As in the proof of Proposition~\ref{prop:Grassmann_manifold_structure}, it follows from the Schur-like decomposition \cite[Equation (5)]{Xu2003} that the matrix $P := UU^+ \in \SpGr(2n,2k)$ is of the form $P = \tilde U \tilde U^+$, where $\tilde U = Q I_{2n,2k}\diag(\Sigma,\Sigma) \in \SpSt(2n,2k)$ and $Q \in \O(2n)$ is orthogonal. Define $Y:= QI_{2n,2k}$. It holds that $Y^TY = I_{2k}$, so $Y \in \St(2n,2k)$, and from $\tilde U^+\tilde U = I_{2k}$ it follows that $Y^+Y = \diag(\Sigma,\Sigma)^{-2}$. The claim now follows from the fact that $U = \tilde U N$ for some $N \in \Sp(2k,\R)$. 
\end{proof}


As the metric $g^\SpGr$ of \eqref{eq:pRmetricGr} is defined via a horizontal lift, we obtain the associated geodesics by projection.
\begin{proposition}
 Let $P \in \SpGr(2n,2k)$ and $\Gamma \in T_P\SpGr(2n,2k)$. Furthermore, let $U \in \rho^{-1}(P) \subset \SpSt(2n,2k)$. The geodesic starting at $P$ in direction $\Gamma$ with respect to the metric \eqref{eq:pRmetricGr} is
 \begin{equation}
 \label{eq:expSpGr}
    \Exp^\SpGr_P(t\Gamma):= \rho(\Exp^{\SpSt,h}_U(t\Gamma^\hor_U))=\expm(t[\Gamma,P])P\expm(-t[\Gamma,P]).
 \end{equation} 
\end{proposition}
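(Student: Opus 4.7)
The plan is to invoke \cite[Corollary 7.46]{ONeill1983}: since it was just noted that $\rho\circ\pi$ is a pseudo-Riemannian submersion with respect to $g^\SpGr$, horizontal geodesics project to geodesics in the base. Because $\pi$ is itself a pseudo-Riemannian submersion (Proposition~\ref{prop:pRmetricSt}), it is equivalent to work one level down: start from the Stiefel geodesic $\Exp^{\SpSt,h}_U(t\Gamma^\hor_U)$ and push it through $\rho$, after first verifying that $\Gamma^\hor_U = \Gamma U$ has a horizontal lift to $\Sp(2n,\R)$ yielding a horizontal geodesic there.

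The first step is to substitute the horizontal lift $\Gamma^\hor_U = \Gamma U$ into the formula $\Exp^{\SpSt,h}_U(t\Delta) = \expm(t\tilde\Omega(U,\Delta))U$ from \eqref{eq:StiefelBigExp}, with $\tilde\Omega(U,\Delta)$ as in \eqref{eq:OmegatildefromDelta}. The key algebraic miracle is that the Grassmann horizontal lift simplifies $\tilde\Omega$ dramatically: every $\Gamma \in T_P\SpGr(2n,2k)$ can be written $\Gamma = [\Omega, P]$ with $\Omega \in \sp(2n,\R)$ and $P^+ = P$, from which one reads off $\Gamma^+ = \Gamma$. Substituting this symmetry together with $UU^+ = P$ into \eqref{eq:OmegatildefromDelta} collapses the expression to
\[
\tilde\Omega(U,\Gamma U) = \bigl(I_{2n}-\tfrac12 P\bigr)\Gamma P - P\Gamma\bigl(I_{2n}-\tfrac12 P\bigr) = \Gamma P - P\Gamma = [\Gamma, P].
\]
A direct check shows $[\Gamma, P] \in \sp(2n,\R)$, so $\expm(t[\Gamma,P]) \in \Sp(2n,\R)$ and hence $(\expm(t[\Gamma,P]))^+ = \expm(-t[\Gamma,P])$.

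Applying $\rho$ to $\gamma(t) = \expm(t[\Gamma,P])\,U$ uses the identity $(MU)^+ = U^+M^+$ for $M \in \Sp(2n,\R)$, which yields immediately
\[
\rho(\gamma(t)) = \gamma(t)\gamma(t)^+ = \expm(t[\Gamma,P])\,P\,\expm(-t[\Gamma,P]),
\]
matching \eqref{eq:expSpGr}. The initial conditions close the argument: $\gamma(0) = P$ is obvious, and $\dot\gamma(0) = [[\Gamma,P],P]$. Choosing the representative $\tilde\Omega \in \sp_P(2n)$ with $\Gamma = [\tilde\Omega, P]$ (available by Proposition~\ref{prop:TangentSpaceGrassmann}), the relation $P\tilde\Omega P = 0$ that follows from $\tilde\Omega = \tilde\Omega P + P\tilde\Omega$ gives $[\Gamma,P] = \tilde\Omega$ and hence $[[\Gamma,P],P] = \Gamma$.

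The main obstacle is justifying the use of O'Neill's corollary, namely that $\gamma$ stays horizontal along its trajectory and not merely at $t=0$. I would handle this by exploiting the symmetric space structure of $\SpGr(2n,2k)$ established before the statement: naturally reductive homogeneous spaces are exactly those for which the projected one-parameter subgroup construction yields full geodesics, so horizontality is automatic. Alternatively, one can check horizontality directly by verifying that $\expm(t[\Gamma,P]) M$ stays in the horizontal distribution of $\Sp(2n,\R)$ with respect to $\rho\circ\pi$, which reduces to a straightforward Lie-bracket computation in the reductive decomposition — this parallels Lemma~\ref{lemma:HorStiefelGeodesic} in the Riemannian setting.
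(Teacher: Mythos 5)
Your proposal is correct, and it reaches the result by a more explicit route than the paper does. The paper's proof is a two-line appeal to \cite[Proposition 11.31]{ONeill1983}: since $\SpGr(2n,2k)$ was just shown to be symmetric (hence naturally reductive) and $\rho\circ\pi$ a pseudo-Riemannian submersion, the geodesics are exactly the projections of one-parameter subgroups of $\Sp(2n,\R)$, and the claimed curve is visibly such a projection; the closed form $\expm(t[\Gamma,P])P\expm(-t[\Gamma,P])$ and the initial conditions are left implicit. You instead go through \cite[Corollary 7.46]{ONeill1983} and do the algebra the paper omits: the computation $\tilde\Omega(U,\Gamma U)=[\Gamma,P]$ from \eqref{eq:OmegatildefromDelta} using $\Gamma^+=\Gamma$ and $UU^+=P$, the identity $[\Gamma,P]\in\sp(2n,\R)$, the evaluation of $\rho$ along the Stiefel geodesic \eqref{eq:StiefelBigExp}, and $[[\Gamma,P],P]=\Gamma$ via $P\tilde\Omega P=0$ — all of which check out. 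The one point you rightly flag as the obstacle, horizontality along the whole curve, is handled adequately: your first option (natural reductivity) is in substance the paper's Proposition 11.31 argument, while your second, direct option is genuinely easy here, because the $h$-orthogonal horizontal distribution of $\rho\circ\pi$ is the left translate of a fixed subspace of $\sp(2n,\R)$ (bi-invariance of $h$), so the one-parameter-subgroup geodesic $\gamma(t)=\expm(t[\Gamma,P])U$ has $\gamma(t)^+\dot\gamma(t)=U^+[\Gamma,P]U=0$ for all $t$ since $P[\Gamma,P]P=0$ — in contrast to the right-invariant Riemannian setting, where the analogous fact required Lemma~\ref{lemma:HorStiefelGeodesic}. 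In short: the paper's argument buys brevity by outsourcing everything to the homogeneous-space machinery, while yours supplies the verification of the stated formula and of $\dot\gamma(0)=\Gamma$ that the paper's ``the claim follows'' glosses over.
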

 \begin{proof}
  By \cite[Proposition 11.31]{ONeill1983}, the pseudo-Riemannian geodesics on the real symplectic Grassmannian $\SpGr(2n,2k)$ are the projections of the one-parameter subgroups in $\Sp(2n,\R)$ under the pseudo-Riemannian submersion $\rho\circ \pi$. Since
  \begin{equation*}
   \rho( \Exp^{\SpSt,h}_U(\Gamma^\hor_U)) = (\rho \circ \pi)\left(\Exp^{\Sp,h}_M\left((\Gamma^\hor_U)^\hor_M\right)\right),
  \end{equation*}
where $M \in \pi(U)^{-1}$, the claim follows.
 \end{proof}
By making use of Proposition~\ref{prop:StiefelReducedExp}, we can reduce the computational complexity of \eqref{eq:expSpGr}. To this end note that $H:=\Gamma^\hor_U \in \Hor^{\rho,h}_U\SpSt(2n,2k)$. Therefore, if $H^+H$ is invertible,
\begin{equation*}
    \Exp^{\SpSt,h}_U(t\Gamma^\hor_U) = \begin{bmatrix}- H & U \end{bmatrix}\expm\left(t\begin{bmatrix} 0 & -I_{2k}\\ H^+H & 0 \end{bmatrix}\right)\begin{bmatrix} 0 \\ I_{2k}\end{bmatrix},
\end{equation*} 
which implies
\begin{equation}\label{eq:GrassmannreducedExp}
\begin{split}
    \Exp^\SpGr_P(t\Gamma) &= \Exp^{\SpSt,h}_U(t\Gamma^\hor_U)(\Exp^{\SpSt,h}_U(t\Gamma^\hor_U))^+\\
    &=\begin{bsmallmatrix}- H & U \end{bsmallmatrix}\expm\left(t\begin{bsmallmatrix} 0 & -I_{2k}\\ H^+H & 0 \end{bsmallmatrix}\right)\begin{bsmallmatrix} 0 & 0\\0 & I_{2k}\end{bsmallmatrix} \expm\left(t\begin{bsmallmatrix} 0 & H^+H\\ -I_{2k} & 0 \end{bsmallmatrix}\right) \begin{bsmallmatrix} -H^+\\ U^+\end{bsmallmatrix}.
\end{split}
\end{equation}
Lifting to another representative $\tilde{U} = UN$ of $P$, where $N \in \Sp(2k,\R)$, implies $\tilde H = \Gamma^\hor_{UN}=HN$, with which one can check that \eqref{eq:GrassmannreducedExp} does not depend on the chosen representative.

Finding the (local) inverse of \eqref{eq:expSpGr}, i.e. given two points $P,F \in \SpGr(2n,2k)$, find the tangent vector $\Gamma \in T_P\SpGr(2n,2k)$ such that $\Exp^\SpGr_P(\Gamma)=F$, is called the geodesic endpoint problem, or also pseudo-Riemannian logarithm. The structure of the real symplectic Grassmann manifold allows us to find it similarly to the case of the standard Grassmann manifold \cite[Theorem 3.3]{BatziesHueperMachadoLeite2015}.
\begin{proposition}\label{prop:GrassmannLog}
    Let $P,F \in \SpGr(2n,2k)$. If 
    \begin{equation}
    \label{eq:SympGrlog}
      \tilde \Omega = \frac12\logm\left((I_{2n}-2F)(I_{2n}-2P)\right)
    \end{equation}
    is well defined and $\tilde \Omega \in \sp_P(2n)$, it holds for $\Gamma := [\tilde \Omega,P] \in T_P\SpGr(2n,2k)$ that $\Exp^\SpGr_P(\Gamma)=F$. 
\end{proposition}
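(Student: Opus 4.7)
The plan is to use the geodesic formula \eqref{eq:expSpGr} at $t=1$, namely $\Exp^\SpGr_P(\Gamma) = \expm([\Gamma,P])P\expm(-[\Gamma,P])$, and to show that plugging in $\Gamma = [\tilde\Omega, P]$ with $\tilde\Omega$ from \eqref{eq:SympGrlog} yields $F$. The work splits naturally into two parts: first simplifying the double commutator $[[\tilde\Omega,P],P]$ using the hypothesis $\tilde\Omega \in \sp_P(2n)$, and second exploiting that $I_{2n}-2P$ is an involution to relate $\expm(\tilde\Omega)$ and $\expm(2\tilde\Omega)$.

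First I would record two algebraic identities that follow directly from $\tilde\Omega = \tilde\Omega P + P\tilde\Omega$ and $P^2 = P$: multiplying by $P$ on the right gives $\tilde\Omega P = \tilde\Omega P + P\tilde\Omega P$, hence $P\tilde\Omega P = 0$. Consequently
\begin{equation*}
[\Gamma,P] = [[\tilde\Omega,P],P] = \tilde\Omega - 2P\tilde\Omega P = \tilde\Omega,
\end{equation*}
so the geodesic formula reduces to $\Exp^\SpGr_P(\Gamma) = \expm(\tilde\Omega)\,P\,\expm(-\tilde\Omega)$. This already collapses the double commutator structure into a pure conjugation, which is the key simplification.

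Next I would observe that $(I_{2n}-2P)^2 = I_{2n}$, so $I_{2n}-2P$ is its own inverse, and that conjugation by $I_{2n}-2P$ sends $\tilde\Omega$ to $-\tilde\Omega$:
\begin{equation*}
(I_{2n}-2P)\,\tilde\Omega\,(I_{2n}-2P) = \tilde\Omega - 2(P\tilde\Omega + \tilde\Omega P) + 4P\tilde\Omega P = \tilde\Omega - 2\tilde\Omega = -\tilde\Omega,
\end{equation*}
again using $\tilde\Omega = \tilde\Omega P + P\tilde\Omega$ and $P\tilde\Omega P = 0$. Applying this relation inside the power series of $\expm$ gives $(I_{2n}-2P)\expm(-\tilde\Omega)(I_{2n}-2P) = \expm(\tilde\Omega)$, i.e. $(I_{2n}-2P)\expm(-\tilde\Omega) = \expm(\tilde\Omega)(I_{2n}-2P)$.

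Finally, I would assemble the pieces. Multiplying the previous identity on the left by $\expm(\tilde\Omega)$ yields $\expm(\tilde\Omega)(I_{2n}-2P)\expm(-\tilde\Omega) = \expm(2\tilde\Omega)(I_{2n}-2P)$. By the definition \eqref{eq:SympGrlog}, $\expm(2\tilde\Omega) = (I_{2n}-2F)(I_{2n}-2P)$, so using the involutive property once more,
\begin{equation*}
\expm(\tilde\Omega)(I_{2n}-2P)\expm(-\tilde\Omega) = (I_{2n}-2F)(I_{2n}-2P)^2 = I_{2n}-2F,
\end{equation*}
which is equivalent to $\expm(\tilde\Omega)\,P\,\expm(-\tilde\Omega) = F$. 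Combined with the reduction from the first paragraph, this gives $\Exp^\SpGr_P(\Gamma) = F$ as claimed. The main obstacle I anticipate is the first step: recognizing that the $\sp_P(2n)$ condition forces $P\tilde\Omega P = 0$ and thereby collapses the double commutator to $\tilde\Omega$; once that simplification is in hand, the involutive structure of $I_{2n}-2P$ makes the rest essentially a one-line verification.
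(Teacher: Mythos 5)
Your proof is correct and follows essentially the same route as the paper: collapse $[[\tilde\Omega,P],P]$ to $\tilde\Omega$ using $\tilde\Omega\in\sp_P(2n)$, establish the anticommutation $(I_{2n}-2P)\tilde\Omega(I_{2n}-2P)=-\tilde\Omega$ so that $(I_{2n}-2P)\expm(-\tilde\Omega)=\expm(\tilde\Omega)(I_{2n}-2P)$, and conclude from $\expm(2\tilde\Omega)=(I_{2n}-2F)(I_{2n}-2P)$ together with the involutivity of $I_{2n}-2P$. The only cosmetic difference is that the paper obtains the anticommutation relation by conjugating the logarithm formula \eqref{eq:SympGrlog} itself, whereas you derive it algebraically from $\tilde\Omega=\tilde\Omega P+P\tilde\Omega$ and $P\tilde\Omega P=0$; both are valid under the stated hypotheses.
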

    \begin{proof}
        We have to show that $F = \expm([\Gamma,P])P\expm(-[\Gamma,P])$. Since by assumption $\tilde \Omega \in \sp_P(2n)$ and therefore $[\Gamma,P] = \tilde \Omega$, this is equivalent to showing $F = \expm(\tilde\Omega) P \expm(-\tilde\Omega)$. The fact that $\Gamma \in T_P\SpGr(2n,2k)$ holds by Proposition~\ref{prop:TangentSpaceGrassmann}.\\
        For $\tilde\Omega$ defined in \eqref{eq:SympGrlog}, it holds that $(I_{2n}-2P)\tilde \Omega (I_{2n}-2P) = \frac12\logm((I_{2n}-2P)(I_{2n}-2F)) = -\tilde \Omega$, since $(I_{2n}-2P)^{-1}= (I_{2n}-2P)$. Therefore $(I_{2n}-2P)\expm(-\tilde\Omega) = (I_{2n}-2P)\expm(-\tilde\Omega)(I_{2n}-2P)^2 = \expm(\tilde\Omega)(I_{2n}-2P)$.
        This leads to $\expm(\tilde\Omega)P\expm(-\tilde\Omega) = \frac12 I_{2n} + \expm(2\tilde\Omega)(-\frac12 I_{2n} + P) = F$, which shows the claim.
    \end{proof}
    
\subsection{Riemannian metric on \texorpdfstring{$\SpGr(2n,2k)$}{SpGr(2n,2k)}}
\label{subsec:SymplGrassRiemannianMetric}
As the real symplectic Grassmann manifold $\SpGr(2n,2k)$ is a quotient of $\SpSt(2n,2k)$ (and of $\Sp(2n,\R)$), we can obtain a Riemannian metric from a right-invariant Riemannian metric on $\SpSt(2n,2k)$.

Again, we split the tangent space $T_U\SpSt(2n,2k)$ at $U \in \SpSt(2n,2k)$ into a vertical part with respect to $\rho$ and a horizontal part with respect to $\rho$ and $g^\SpSt_U$ from \eqref{eq:RiemMetricStiefel}. The former yields \eqref{eq:VerStiefel}, as the vertical space is independent of the metric. The latter gives
\begin{equation*}
\begin{split}
 \Hor^{\rho,g}_U\SpSt(2n,2k) &= (\Ver^\rho_U\SpSt(2n,2k))^{\perp,g}\\
 &= \left\lbrace (UH^+ - HU^+)^TU \ \middle|\ U^+H = 0 \right\rbrace.
\end{split}
\end{equation*}
This follows from $g^\SpSt_U\left(UA,(UH^+ - HU^+)^TU\right) = 0$ for all $UA \in \Ver^\rho_U\SpSt(2n,2k)$ and all $H \in \R^{2n\times 2k}$ with $U^+H = 0$, and by counting degrees of freedom. For any $\Delta = (UH^+ - HU^+)^TU \in \Hor^{\rho,g}_U\SpSt(2n,2k)$, the corresponding $H$ (which is not to be confused with $(I_{2n}-UU^+)\Delta$ here) can by calculated via
\begin{equation}
\label{eq:HfromhorizontalDelta}
    H = (I_{2n}-UU^+)J_{2n}^T\Delta(U^TU)^{-1}J_{2k}.
\end{equation} 

We can identify the horizontal space $\Hor^{\rho,g}_U\SpSt(2n,2k)$ with the tangent space $T_{\rho(U)}\SpGr(2n,2k)$ and define a Riemannian metric on $\SpGr(2n,2k)$ via the restriction of the Riemannian metric $g^\SpSt$
of \eqref{eq:RiemMetricStiefel} to the horizontal spaces.

For $\Gamma_i \in T_{UU^+}\SpGr(2n,2k)$, let $(\Gamma_i)^{\hor,g}_U \in \Hor^{\rho,g}_U\SpSt(2n,2k)$ be the horizontal lift to $\Hor^{\rho,g}_U\SpSt(2n,2k)$, i.e. $\D \rho_U((\Gamma_i)^{\hor,g}_U) = \Gamma_i$. The mapping
\begin{equation*}
 \begin{split}
  g^\SpGr_{UU^+}&\colon T_{UU^+}\SpGr(2n,2k) \times T_{UU^+}\SpGr(2n,2k) \to \R,\\
  g^\SpGr_{UU^+}(\Gamma_1,\Gamma_2)&:=g^\SpSt_U((\Gamma_1)^{\hor,g}_U,(\Gamma_2)^{\hor,g}_U)
 \end{split}
\end{equation*} 
defines pointwise a Riemannian metric. We are not aware of an explicit mapping to calculate the horizontal lift with respect to $g^\SpSt$ for a given $\Gamma \in T_{UU^+}\SpGr(2n,2k)$. Nevertheless, one can directly work with symplectic Stiefel representatives and horizontal tangent vectors, i.e., with $2n \times 2k$-matrices.

\begin{lemma}
 For two horizontal tangent vectors 
 \begin{equation*}
    \Delta_i := (\Gamma_i)^{\hor,g}_U = (UH_i^+ - H_iU^+)^TU \in \Hor^{\rho,g}_U\SpSt(2n,2k),
 \end{equation*}
it holds that
\begin{equation*}
\begin{split}
 g^\SpSt_U((\Gamma_1)^{\hor,g}_U,(\Gamma_2)^{\hor,g}_U) &= \tr\left((U^TU)^{-1}\Delta_1^T(I_{2n}-UU^+)\Delta_2\right)\\
 &= \tr\left(U^TU(H_2^TH_1)^+ - (U^TH_1)^+H_2^TU\right).
\end{split}
 \end{equation*} 
\end{lemma}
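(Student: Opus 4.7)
My approach is organized around the following observation: for a horizontal tangent vector $\Delta_i = (UH_i^+ - H_iU^+)^T U$ with $U^+ H_i = 0$, the ``skew factor'' $\bar\Omega_i := UH_i^+ - H_iU^+$ lies in $\sp(2n,\R)$, and its transpose $\bar\Omega_i^T$ actually sits in the horizontal subspace $\Hor^{\pi,g}\sp(2n,\R)$ described in \eqref{eq:hor_pi_gM}. Granting this, $\bar\Omega_i^T M \in T_M\Sp(2n,\R)$ is horizontal and $\D\pi_M(\bar\Omega_i^T M) = \bar\Omega_i^T U = \Delta_i$, so by uniqueness of horizontal lifts $\bar\Omega(\Delta_i) = \bar\Omega_i^T$. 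By construction of $g^\SpSt$ as a submersion metric and the right-invariance of $g_M$, this reduces the lemma to evaluating
\begin{equation*}
 g^\SpSt_U(\Delta_1,\Delta_2) = g_M(\bar\Omega_1^T M, \bar\Omega_2^T M) = \tfrac12\tr(\bar\Omega_1\bar\Omega_2^T).
\end{equation*}

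\textbf{Step 1 (horizontality).} I would first verify that $\bar\Omega_i^T$ satisfies the characterizing identity from Proposition~\ref{prop:HorOmegaStProperty}-type form appearing in \eqref{eq:hor_pi_gM}, namely $\bar\Omega_i^T = \bar\Omega_i^T P + P\bar\Omega_i^T - P\bar\Omega_i^T P$ with $P = J_{2n}^T UU^+ J_{2n} = J_{2n}^T U J_{2k} U^T$. Writing $\bar\Omega_i^T = J_{2n}^T H_i J_{2k} U^T - J_{2n}^T U J_{2k} H_i^T$ and expanding each product, the Stiefel relation $U^T J_{2n} U = J_{2k}$ collapses two of the terms, while the horizontality $U^+H_i = 0$ (equivalently $U^T J_{2n} H_i = 0$) kills the remaining two. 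What survives is precisely $\bar\Omega_i^T$.

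\textbf{Step 2 (the trace computation).} I would then expand $\tfrac12\tr(\bar\Omega_1 \bar\Omega_2^T)$ by substituting $\bar\Omega_1 = UH_1^+ - H_1 U^+$ and $\bar\Omega_2^T = J_{2n}^T H_2 J_{2k} U^T - J_{2n}^T U J_{2k} H_2^T$, producing four summands. Using $J_{2n} J_{2n}^T = I$ to cancel adjacent $J_{2n}$s, cyclicity of the trace, and rewriting $J_{2k}^T H^T J_{2n} = H^+$, two of the four terms combine into $\tr(U^TU(H_2^TH_1)^+)$ and the other two into $-\tr((U^TH_1)^+ H_2^T U)$, yielding the second claimed equality. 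The first equality then follows by backward-reading \eqref{eq:RiemMetricStiefel}: once we know the metric equals the explicit formula of Step 2, we may equivalently rewrite it as $\tr((U^TU)^{-1}\Delta_1^T(I_{2n} - UU^+)\Delta_2)$ by checking that for horizontal inputs the factor $\tfrac12 J_{2n}^T U(U^TU)^{-1}U^T J_{2n}$ in \eqref{eq:RiemMetricStiefel} may be replaced by $UU^+$ under the trace (the difference annihilates a factor $U^T J_{2n} H_i = 0$).

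\textbf{Main obstacle.} There is no deep difficulty, only careful sign bookkeeping: every $J_{2m}$ satisfies $J_{2m}^T = -J_{2m}$ and $J_{2m}^2 = -I$, and the symplectic inverse wraps factors of $J$ on both sides, so one easily loses a sign across half a dozen manipulations. To keep this under control I would do all expansions in terms of the explicit matrices $(H_i^+)^T = J_{2n}^T H_i J_{2k}$ and $(U^+)^T = J_{2n}^T U J_{2k}$, reserving the $+$-notation only for the final collapse, and I would track the horizontality condition exclusively in the form $U^T J_{2n} H_i = 0$ rather than $U^+ H_i = 0$ so that the cancellations become visually transparent.
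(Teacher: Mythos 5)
Your proposal is correct and, in substance, it is the paper's proof: the paper only says ``direct calculation from the properties of the trace'', and your Steps 1--2 are exactly that calculation, organized around the identification $\bar\Omega(\Delta_i)=\bar\Omega_i^T=(UH_i^+-H_iU^+)^T$, which the paper itself uses (without proof) a few lines later when it writes $\bar\Omega^T=UH^+-HU^+$ before the reduced Grassmann geodesic formula; your horizontality check of $\bar\Omega_i^T$ against \eqref{eq:hor_pi_gM} and the expansion of $\tfrac12\tr(\bar\Omega_1\bar\Omega_2^T)$ into $\tr\bigl(U^TU(H_2^TH_1)^+-(U^TH_1)^+H_2^TU\bigr)$ both go through as you describe.

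One caveat concerns your justification of the first claimed equality. The replacement of $\tfrac12 J_{2n}^TU(U^TU)^{-1}U^TJ_{2n}$ by $UU^+$ under the trace in \eqref{eq:RiemMetricStiefel} is indeed valid for $\rho$-horizontal inputs, but not for the reason in your parenthesis: the difference of the two factors does not annihilate a factor $U^TJ_{2n}H_i$, since e.g.\ $U^TJ_{2n}\Delta_2=U^TH_2J_{2k}U^TU-U^TUJ_{2k}H_2^TU$ is in general nonzero even though $U^TJ_{2n}H_2=0$. The two traces agree only after expanding both into four terms and pairing them via transpose-invariance of the trace (together with $J_{2k}^TXJ_{2k}=J_{2k}XJ_{2k}^T$), so as written that parenthetical argument would stall. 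A cleaner repair, entirely within your setup, is to bypass \eqref{eq:RiemMetricStiefel} and expand the lemma's first expression directly: using $H_i^+U=0$ and $U^+U=I_{2k}$, the cross terms cancel and one finds $\Delta_1^T(I_{2n}-UU^+)\Delta_2=U^TU(H_2^TH_1)^+U^TU-U^TU(U^TH_1)^+H_2^TU$, so that $\tr\bigl((U^TU)^{-1}\Delta_1^T(I_{2n}-UU^+)\Delta_2\bigr)$ equals the second expression, which you have already shown equals $\tfrac12\tr(\bar\Omega_1\bar\Omega_2^T)=g^\SpSt_U(\Delta_1,\Delta_2)$. With that adjustment the proof is complete.
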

\begin{proof}
 This follows by a direct calculation from the properties of the trace.
\end{proof}

Let $f$ be a function on the real symplectic Grassmannian, given on symplectic Stiefel representatives by $f\colon \SpSt(2n,2k) \to \R$, with $f(U)=f(UN)$ for all $N \in \Sp(2k,\R)$. 
We assume that $f$ can (locally) be extended to a smooth function on $2n\times 2k$-matrices, for convenience again denoted by $f$.
The Riemannian gradient of $f$ with respect to $g^\SpGr$ is given by
\begin{equation*}
    \grad^g_f(U) = (UH^+ - HU^+)^TU = J_{2n}^THJ_{2k}U^TU - J_{2n}^TUJ_{2k}H^TU,
\end{equation*} 
with
\begin{equation*}
    H = (I_{2n}-UU^+)J_{2n}^T\nabla f_U J_{2k},
\end{equation*}
where $\nabla f_U$ denotes the Euclidean gradient of a smooth extension of $f$ around $U$ in $\R^{2n\times 2k}$. This follows from \cite[Equation (3.39)]{AbsilMahonySepulchre2008} and $g^\SpSt_U(\grad^g_f(U),\Delta) = \D f_U(\Delta) = \tr((\nabla f_U)^T\Delta)$ for all $\Delta \in \Hor^{\rho,g}_U\SpSt(2n,2k)$, as well as the fact that $\grad^g_f(U) \in \Hor^{\rho,g}_U\SpSt(2n,2k)$.

\begin{proposition}
 Let $U \in \SpSt(2n,2k)$ and $\Delta \in \Hor^{\rho,g}_U\SpSt(2n,2k)$. The lifted symplectic Grassmann geodesic from $U$ in direction $\Delta$ is given by
 \begin{equation}
 \label{eq:RiemExpSpGrBig}
   \Exp^\SpGr_U(t\Delta) = \expm(t(\bar\Omega-\bar\Omega^T))\expm(t\bar\Omega^T)U,
 \end{equation}
where $\bar\Omega$ is given by \eqref{eq:barOmegafromDelta}.
\end{proposition}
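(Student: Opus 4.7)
My plan is to identify the right-hand side of~\eqref{eq:RiemExpSpGrBig} with the Stiefel Riemannian geodesic $\Exp^{\SpSt,g}_U(t\Delta)$ from~\eqref{eq:RiemGeodesicStiefel} and then descend via the Riemannian submersion $\rho$. First, I would note that with $\bar\Omega=\bar\Omega(\Delta)$ as in~\eqref{eq:barOmegafromDelta}, the candidate curve $\gamma(t) := \expm(t(\bar\Omega-\bar\Omega^T))\expm(t\bar\Omega^T)U$ is exactly $\Exp^{\SpSt,g}_U(t\Delta)$, so it is the $g^\SpSt$-Riemannian geodesic on $\SpSt(2n,2k)$ starting at $U$ in direction $\Delta$.

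Next I would exploit that $g^\SpGr$ was defined precisely by requiring that $\rho$ be a Riemannian submersion on the horizontal distribution $\Hor^{\rho,g}\SpSt(2n,2k)$, which together with the right-invariance of $g^\SpSt$ under the $\Sp(2k,\R)$-action makes $\rho\colon (\SpSt(2n,2k),g^\SpSt)\to(\SpGr(2n,2k),g^\SpGr)$ a Riemannian submersion in the classical sense. Invoking the Riemannian analogue of~\cite[Corollary 7.46]{ONeill1983}, which was used earlier to obtain~\eqref{eq:StiefelBigExp} and~\eqref{eq:expSpGr}, a geodesic on the total space whose initial velocity is horizontal stays horizontal throughout and projects under $\rho$ to a geodesic in the base. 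Applied to $\gamma$, whose initial velocity $\Delta$ lies in $\Hor^{\rho,g}_U\SpSt(2n,2k)$ by hypothesis, this shows that $\rho(\gamma(t))$ is the $g^\SpGr$-geodesic starting at $\rho(U)$ with initial velocity $\D\rho_U(\Delta)$, which is the lifted content of~\eqref{eq:RiemExpSpGrBig}.

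The main obstacle will be verifying that $\rho$-horizontality is preserved along $\gamma$, i.e., the Riemannian analogue of Lemma~\ref{lemma:HorStiefelGeodesic} for the projection $\rho$ in place of $\pi$. To handle this, I would use the characterization $\Hor^{\rho,g}_U\SpSt(2n,2k) = \{(UH^+ - HU^+)^TU \mid U^+H = 0\}$ and construct a $t$-dependent $H(t)$ by conjugating the initial $H$ with the two exponential factors of $\gamma(t)$, then check that $\gamma(t)^+ H(t) = 0$ propagates from the condition $U^+H = 0$ at $t=0$. The key identities mirror those in Lemma~\ref{lemma:HorStiefelGeodesic}, namely the conjugation relations $J_{2n}^T\expm(s(\bar\Omega-\bar\Omega^T))J_{2n} = \expm(s(\bar\Omega-\bar\Omega^T))$ and $J_{2n}\expm(s\bar\Omega)J_{2n} = \expm(-s\bar\Omega^T)$ for $\bar\Omega\in\sp(2n,\R)$, with the role of $EE^+$ now played by $J_{2n}UU^+J_{2n}^T$. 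Once this horizontality propagation is in hand, the statement follows directly from the Riemannian submersion theorem and the identification in the first step.
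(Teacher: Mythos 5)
Your proposal is correct and takes essentially the same route as the paper: identify the right-hand side with the $g^{\SpSt}$-geodesic, show that $\rho$-horizontality of its velocity is preserved for all $t$ by a computation analogous to Lemma~\ref{lemma:HorStiefelGeodesic} (the paper encodes this via the equivalent condition $J_{2n}UJ_{2k}^TU^T\bar\Omega(\Delta)J_{2n}UJ_{2k}^TU^T=0$ rather than your $H(t)$ bookkeeping), and then conclude with \cite[Corollary 7.46]{ONeill1983}.
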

\begin{proof}
    We need to show that the tangent vector $\frac{\D}{\D t} \Exp^\SpGr_U(t\Delta)$ is horizontal for every $t$. Then, the claim follows from \cite[Cor. 7.46]{ONeill1983}. Since $\Delta \in \Hor^{\rho,g}_U\SpSt(2n,2k)$ is equivalent to $J_{2n}UJ_{2k}^TU^T\bar\Omega(\Delta)J_{2n}UJ_{2k}^TU^T = 0$, the proof follows in the same fashion as the one of Lemma~\ref{lemma:HorStiefelGeodesic}.
\end{proof}

Since $\bar\Omega^T = UH^+-HU^+$, with $H$ from \eqref{eq:HfromhorizontalDelta}, we can reduce  \eqref{eq:RiemExpSpGrBig} with \eqref{eq:reducedexp} to the matrix exponentials of a $8k \times 8k$ and $4k \times 4k$ matrix, respectively.

\begin{proposition}
    Let $U \in \SpSt(2n,2k)$ and $\Delta \in \Hor^{\rho,g}_U\SpSt(2n,2k)$. Define $H$ as in \eqref{eq:HfromhorizontalDelta},
    \begin{equation*}
        X := \begin{bmatrix} J_{2n}^T H J_{2k} & -J_{2n}^T U J_{2k} & -U & H \end{bmatrix} \in \R^{2n \times 8k}
    \end{equation*}
    and
    \begin{equation*}
        Y := \begin{bmatrix} U & H & J_{2n}^T H J_{2k} & J_{2n}^T U J_{2k} \end{bmatrix} \in \R^{2n \times 8k}.
    \end{equation*}
    Then
    \begin{equation*}
      \Exp^\SpGr_U(t\Delta) = X \expm(tY^TX) \begin{bmatrix} 0_{4k} \\ -I_{4k} \end{bmatrix} \expm\left(t \begin{bmatrix} 0 & -H^+H\\ I_{2k} & 0 \end{bmatrix}\right) \begin{bmatrix} I_{2k}\\ 0 \end{bmatrix}.
    \end{equation*} 
\end{proposition}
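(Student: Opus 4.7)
The plan is to apply the Celledoni--Iserles identity \eqref{eq:reducedexp} to each of the two matrix exponentials in \eqref{eq:RiemExpSpGrBig}, analogously to the preceding reduction \eqref{eq:SpStGeodesicSmall}. Because neither of the small Gramians that will appear is guaranteed invertible here, I use the equivalent entire-function reformulation
\[
    \expm(tAB^T) = I_{2n} + tA\,\psi(tB^TA)\,B^T,\qquad \psi(s) := \frac{e^s-1}{s} = \sum_{j=0}^\infty \frac{s^j}{(j+1)!},
\]
valid for any matrices $A,B$ of matching sizes, together with its immediate consequence $\expm(tAB^T)(Ac) = A\,\expm(tB^TA)\,c$ for every column block $c$.

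\textbf{Inner exponential.} Since $\Delta \in \Hor^{\rho,g}_U\SpSt(2n,2k)$, the horizontal lift reads $\bar\Omega(\Delta)^T = UH^+ - HU^+$, where $H$ is given by \eqref{eq:HfromhorizontalDelta} and satisfies $U^+H = 0$. The identity $(U^+H)^+ = -H^+U$ (direct computation from the definition of the symplectic inverse) then forces $H^+U = 0$ as well. Setting
\[
    X_1 := \begin{bmatrix} U & -H \end{bmatrix},\qquad Y_1 := \begin{bmatrix} J_{2n}^T H J_{2k} & J_{2n}^T U J_{2k} \end{bmatrix} \in \R^{2n\times 4k},
\]
a block multiplication that uses $U^+U = I_{2k}$, $U^+H = 0$, and $H^+U = 0$ yields $\bar\Omega(\Delta)^T = X_1 Y_1^T$ and
\[
    Y_1^T X_1 = \begin{bmatrix} 0 & -H^+H\\ I_{2k} & 0 \end{bmatrix}.
\]
Taking $u_0 := \bigl[\begin{smallmatrix} I_{2k}\\ 0\end{smallmatrix}\bigr]$, one has $X_1 u_0 = U$, so the consequence above gives $\expm\bigl(t\bar\Omega(\Delta)^T\bigr)U = X_1 \expm(tY_1^TX_1)\,u_0$.

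\textbf{Outer exponential.} Expanding $\bar\Omega = J_{2n}^T H J_{2k} U^T - J_{2n}^T U J_{2k} H^T$ and $\bar\Omega^T = UH^+ - HU^+$, block multiplication verifies $\bar\Omega - \bar\Omega^T = XY^T$ with $X,Y$ as defined in the statement. Crucially, $X_1$ equals the last $4k$ columns of $X$ with reversed sign, i.e., $X_1 = X\bigl[\begin{smallmatrix} 0 \\ -I_{4k}\end{smallmatrix}\bigr]$. Hence the intermediate vector factors as $\expm(t\bar\Omega^T)U = X c$ with
\[
    c := \begin{bmatrix} 0_{4k}\\ -I_{4k}\end{bmatrix}\expm(tY_1^TX_1)\,u_0,
\]
and a second application of the consequence yields $\expm(tXY^T)\,Xc = X\expm(tY^TX)\,c$, which is precisely the formula in the proposition.

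\textbf{Main obstacle.} In contrast to Proposition~\ref{prop:StiefelReducedExp}, both $Y_1^TX_1$ and $Y^TX$ may be singular, so \eqref{eq:reducedexp} cannot be applied directly in quotient form. The entire-function reformulation above circumvents this, at the price of having to exhibit, at each stage, a matrix $c$ such that the vector on which the exponential acts equals $A c$ for the ``tall'' factor $A$. Here this is routine, via $X_1 u_0 = U$ and $X_1 = X\bigl[\begin{smallmatrix} 0 \\ -I_{4k}\end{smallmatrix}\bigr]$, and is the only non-routine ingredient in the calculation.
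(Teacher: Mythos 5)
Your proof is correct and takes essentially the same route as the paper: the same rank factorizations $\bar\Omega-\bar\Omega^T = XY^T$ and $\bar\Omega^T = X_1Y_1^T$ with $Y_1^TX_1 = \begin{bsmallmatrix} 0 & -H^+H\\ I_{2k} & 0\end{bsmallmatrix}$, and the same observation that $\begin{bmatrix} U & -H\end{bmatrix} = X\begin{bsmallmatrix} 0_{4k}\\ -I_{4k}\end{bsmallmatrix}$ lets the second exponential be pushed through. The only noteworthy differences are that your entire-function ($\psi$) version of \eqref{eq:reducedexp} dispenses with the invertibility of $Y^TX$ that the paper's proof tacitly assumes — a mild but genuine improvement — and that your auxiliary identity should read $(U^+H)^+ = H^+U$ rather than $-H^+U$, a harmless slip since both sides vanish when $U^+H=0$.
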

\begin{proof}
    It holds that $\bar\Omega - \bar\Omega^T = XY^T$ and $\bar\Omega^T = UH^+-HU^+$. By \eqref{eq:reducedexp},
    \begin{align*}
        \expm(\bar\Omega-\bar\Omega^T)\begin{bmatrix} U & -H\end{bmatrix} &= \expm(XY^T)\begin{bmatrix} U & -H\end{bmatrix}\\
        &= (I_{2n} + X(\expm(Y^TX) - I_{8k})(Y^TX)^{-1}Y^T)\begin{bmatrix} U & -H\end{bmatrix}.
    \end{align*}
    Since
    \begin{align*}
        (Y^TX)^{-1}Y^T\begin{bmatrix} U & -H\end{bmatrix} = (Y^TX)^{-1}Y^TX\begin{bmatrix} 0_{4k} \\ -I_{4k} \end{bmatrix},
    \end{align*}
    it follows that $\expm(\bar\Omega -\bar\Omega^T)\begin{bmatrix} U & -H\end{bmatrix} = X\expm(Y^TX)\begin{bsmallmatrix} 0_{4k} \\ -I_{4k} \end{bsmallmatrix}$. Together with
    \begin{align*}
        \expm(\bar\Omega^T)U = \begin{bmatrix} U & -H\end{bmatrix}\expm\left(t \begin{bmatrix} 0 & -H^+H\\ I_{2k} & 0 \end{bmatrix}\right) \begin{bmatrix} I_{2k}\\ 0 \end{bmatrix}
    \end{align*}
    as in Proposition~\ref{prop:StiefelReducedExp}, this shows the claim.
\end{proof}

\section{Retractions and computational issues}
\label{sec:Retractions}

Calculating the matrix exponential of an $n \times n$ matrix is computationally expensive if $n$ is large. Furthermore, numerical experiments show that even though the matrix exponential of a Hamiltonian matrix is theoretically guaranteed to yield a symplectic matrix as an output, this is not necessarily the case in practice, where one needs to rely on numerical tools to compute the standard matrix exponential. 
While there are specialized algorithms for the matrix exponential of a Hamiltonian  matrix \cite{Kuo2019}, there is another alternative: The Cayley map.
In this section, we propose the use of the Cayley map for approximating the pseudo-Riemannian geodesics in order to define retractions on the symplectic Stiefel and Grassmann manifold. Furthermore, these retractions turn out to be invertible in closed form on both manifolds, which can for example be used for interpolation and optimization purposes and for defining local coordinates.
In the experiments of Section \ref{sec:NumericalTests}, the Cayley-based retraction turns out to be computationally cheaper and to retain the manifold structure 
to a much higher numerical accuracy.

A \emph{retraction} \cite{AbsilMahonySepulchre2008} on a smooth manifold $M$ with tangent bundle $TM$ is a smooth mapping $R \colon TM \to M$ such that for any $x\in M$,
\begin{enumerate}
 \item $R_x(0)=x$,
 \item $\D (R_x)_0=\id$,
\end{enumerate}
where $R_x$ is the restriction of $R$ to $T_xM$.

The Cayley transformation
\begin{equation*}
    \cay(X) := (I_n + X)(I_n-X)^{-1},\quad X \in \R^{n \times n}
\end{equation*} 
is widely used as a standard approximation of the matrix exponential $\expm(2X)$.
Of special interest in the present context is the property that $\cay$ maps from $\sp(2n,\R)$ to $\Sp(2n,\R)$ \cite{Arnold2001}. This was also exploited in \cite{fiori2011}. The inverse of the Cayley transform is given by \cite{Arnold2001}
\begin{equation*}
    \cay^{-1}(M) = (M - I_n)(I_n + M)^{-1}.
\end{equation*}

\subsection{Cayley retraction on the real symplectic Stiefel manifold}
Replacing the matrix exponential in the pseudo-Riemannian exponential \eqref{eq:StiefelBigExp}
on the symplectic Stiefel manifold with the Cayley transform leads to the Cayley retraction defined in \cite[Definition 5.2]{GaoSonAbsilStykel2020symplecticoptimization}. 
Yet note that the Cayley retraction in the aforementioned reference was found unaware of the pseudo-Riemannian geodesics by transferring the Cayley retraction on the classical Stiefel manifold $\St(n,k)$ to the symplectic case.
\begin{proposition}
    Let $U \in \SpSt(2n,2k)$ and $\Delta \in T_U\SpSt(2n,2k)$. For $\Delta=\tilde\Omega(U,\Delta) U$, with $\tilde\Omega(U,\Delta)$ as in \eqref{eq:OmegatildefromDelta}, the map
    \begin{equation}\label{eq:RSpStBig}
        \Ret^\SpSt_U(\Delta) := \cay\left(\frac12 \tilde\Omega(U,\Delta)\right)U
    \end{equation}
    is a retraction. The derivative of the curve $\gamma(t):= \Ret^\SpSt_U(t\Delta)=\cay(\frac{t}{2}\tilde\Omega(U,\Delta))U$ is given by
    \begin{equation*}
        \dot{\gamma}(t)= \frac12\left(\left(I_{2n}+\frac{t}{2}\tilde\Omega(U,\Delta)\right)^{-1} + \left(I_{2n}-\frac{t}{2}\tilde\Omega(U,\Delta)\right)^{-1}\right)\tilde\Omega(U,\Delta) \gamma(t)
    \end{equation*} 
\end{proposition}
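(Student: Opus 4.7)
The plan is to verify the three defining properties of a retraction, namely (i) $\Ret_U^\SpSt(\Delta) \in \SpSt(2n,2k)$ with $\Ret^\SpSt$ smooth near the zero section, (ii) $\Ret_U^\SpSt(0) = U$, and (iii) $\D(\Ret_U^\SpSt)_0 = \id_{T_U\SpSt(2n,2k)}$. The derivative formula for $\dot\gamma(t)$ will drop out of a direct product-rule calculation, and evaluating it at $t=0$ closes item (iii).

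For (i), I would exploit that $\tilde\Omega(U,\Delta) \in \Hor^{\pi,h}\sp(2n,\R) \subset \sp(2n,\R)$ is Hamiltonian. Since the Cayley transform maps $\sp(2n,\R)$ into $\Sp(2n,\R)$ (as already recalled in this section), the matrix $M_t := \cay\bigl(\tfrac{t}{2}\tilde\Omega(U,\Delta)\bigr)$ lies in $\Sp(2n,\R)$ whenever $I_{2n} - \tfrac{t}{2}\tilde\Omega(U,\Delta)$ is invertible, which is the case for sufficiently small $|t|$. Left multiplication by a symplectic matrix preserves $\SpSt(2n,2k)$, since $(M_tU)^+(M_tU) = U^+M_t^+ M_t U = U^+U = I_{2k}$. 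Smoothness of $\Ret^\SpSt$ is inherited from smoothness of $\tilde\Omega(U,\cdot)$ and of $\cay$ on its domain, and (ii) is immediate from $\cay(0)=I_{2n}$.

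For the derivative formula, I would set $A := \tfrac{1}{2}\tilde\Omega(U,\Delta)$, constant in $t$. Using that $A$ commutes with $I_{2n}\pm tA$, the standard identity $\tfrac{d}{dt}(I_{2n}-tA)^{-1} = A(I_{2n}-tA)^{-2}$ and the product rule applied to $\gamma(t) = (I_{2n}+tA)(I_{2n}-tA)^{-1}U$ yield
\begin{equation*}
    \dot\gamma(t) = A(I_{2n}-tA)^{-1}U + (I_{2n}+tA)A(I_{2n}-tA)^{-2}U = 2A(I_{2n}-tA)^{-2}U.
\end{equation*}
Next, using $\gamma(t) = (I_{2n}+tA)(I_{2n}-tA)^{-1}U$ one rewrites $(I_{2n}-tA)^{-2}U = (I_{2n}-tA)^{-1}(I_{2n}+tA)^{-1}\gamma(t)$, and then invokes the algebraic identity
\begin{equation*}
    (I_{2n}+tA)^{-1} + (I_{2n}-tA)^{-1} = 2(I_{2n}-t^2A^2)^{-1} = 2(I_{2n}-tA)^{-1}(I_{2n}+tA)^{-1}
\end{equation*}
to obtain $\dot\gamma(t) = \tfrac{1}{2}\bigl[(I_{2n}+tA)^{-1}+(I_{2n}-tA)^{-1}\bigr]\cdot 2A\cdot\gamma(t)$. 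Substituting back $A = \tfrac12\tilde\Omega(U,\Delta)$ produces the claimed formula. Finally, evaluating at $t=0$ gives $\dot\gamma(0) = \tfrac{1}{2}(I_{2n}+I_{2n})\tilde\Omega(U,\Delta)U = \tilde\Omega(U,\Delta)U = \Delta$, which establishes (iii).

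The main bit of care will be the algebraic re-expression that turns $2A(I_{2n}-tA)^{-2}$ into the symmetric form with $(I_{2n}\pm\tfrac{t}{2}\tilde\Omega)^{-1}$ flanking $\tilde\Omega\,\gamma(t)$; each individual step is routine, but it relies on repeated, clean use of the commutativity of $A$ with $I_{2n}\pm tA$. Beyond that, the proof only needs the Lie-algebra containment of $\tilde\Omega(U,\Delta)$ and the standard mapping property $\cay(\sp(2n,\R)) \subset \Sp(2n,\R)$ already established in the paper.
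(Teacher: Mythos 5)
Your proposal is correct. The derivative computation is exactly the ``straightforward calculation'' the paper alludes to: writing $\gamma(t)=(I_{2n}+tA)(I_{2n}-tA)^{-1}U$ with $A=\tfrac12\tilde\Omega(U,\Delta)$, using the product rule and the commutativity of $A$ with $(I_{2n}\pm tA)^{-1}$, and the identity $(I_{2n}+tA)^{-1}+(I_{2n}-tA)^{-1}=2(I_{2n}-tA)^{-1}(I_{2n}+tA)^{-1}$; each algebraic step you outline checks out, and evaluating at $t=0$ indeed yields $\dot\gamma(0)=\tilde\Omega(U,\Delta)U=\Delta$. Where you genuinely diverge is in the retraction property itself: the paper simply cites \cite[Prop.~5.3]{GaoSonAbsilStykel2020symplecticoptimization} for this, whereas you verify it from scratch — $\tilde\Omega(U,\Delta)\in\sp(2n,\R)$, hence $\cay\bigl(\tfrac12\tilde\Omega(U,\Delta)\bigr)\in\Sp(2n,\R)$ wherever the Cayley transform is defined (i.e.\ near the zero section), left multiplication by a symplectic matrix preserves $\SpSt(2n,2k)$ since $(MU)^+MU=U^+M^+MU=I_{2k}$, smoothness follows from smoothness of $\tilde\Omega(U,\cdot)$ and of $\cay$, and the local rigidity condition $\D(\Ret^\SpSt_U)_0=\id$ drops out of your derivative formula. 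This self-contained route buys independence from the external reference at the cost of a few extra lines, and it makes explicit the domain caveat (invertibility of $I_{2n}-\tfrac{t}{2}\tilde\Omega(U,\Delta)$ for small $|t|$) that is implicit in both the paper and the cited work; the paper's citation-based proof is shorter but relies on the reader tracking down the argument elsewhere.
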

\begin{proof}
     The fact that $\Ret^\SpSt$ is a retraction is shown in \cite[Prop. 5.3]{GaoSonAbsilStykel2020symplecticoptimization}. The formula for $\dot\gamma(t)$ follows from a straightforward calculation, making use of the fact that $\tilde\Omega(U,\Delta)$ commutes with $\cay(\frac{t}{2}\tilde\Omega(U,\Delta))$.
\end{proof}

In \cite[Proposition 5.5]{GaoSonAbsilStykel2020symplecticoptimization}, it was proposed to use the Sherman-Morrison-Woodbury formula
\begin{equation*}
    (A + XY^T)^{-1} = A^{-1} - A^{-1}X(I+Y^TA^{-1}X)^{-1}Y^TA^{-1},
\end{equation*}
where $A \in \R^{n\times n}, X,Y \in \R^{n \times k}$, to reduce the matrix inverse in \eqref{eq:RSpStBig} from $2n \times 2n$ to $4k \times 4k$. We show that we can even reduce it to a matrix inversion of dimensions  $2k \times 2k$.
\begin{proposition}
    Let $U \in \SpSt(2n,2k)$ and $\Delta \in T_U\SpSt(2n,2k)$. Define $A := U^+\Delta$ and $H := \Delta - UA$. Then
    \begin{equation}\label{eq:RSpStSmall}
        \Ret^\SpSt_U(t\Delta) = -U + (tH+2U)\left(\frac{t^2}{4}H^+H -\frac{t}{2}A + I_{2k}\right)^{-1}.
    \end{equation} 
\end{proposition}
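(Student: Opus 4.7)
Proof plan. The idea is to avoid Sherman--Morrison--Woodbury altogether and instead guess the explicit form of $(I_{2n}-\tfrac{t}{2}\tilde\Omega)^{-1}U$ from the structure of $\tilde\Omega(U,\Delta)$. The starting point is the Möbius identity
\begin{equation*}
    \cay\!\left(\tfrac{t}{2}\tilde\Omega\right) = \left(I_{2n}+\tfrac{t}{2}\tilde\Omega\right)\left(I_{2n}-\tfrac{t}{2}\tilde\Omega\right)^{-1} = -I_{2n} + 2\left(I_{2n}-\tfrac{t}{2}\tilde\Omega\right)^{-1},
\end{equation*}
so that $\Ret^\SpSt_U(t\Delta) = -U + 2V$ with $V := (I_{2n}-\tfrac{t}{2}\tilde\Omega)^{-1}U$. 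It therefore suffices to show
\begin{equation*}
    V = \left(\tfrac{t}{2}H+U\right)\left(\tfrac{t^2}{4}H^+H-\tfrac{t}{2}A+I_{2k}\right)^{-1}.
\end{equation*}

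Set $C := (\tfrac{t^2}{4}H^+H-\tfrac{t}{2}A+I_{2k})^{-1}$ and take $V:=(\tfrac{t}{2}H+U)C$ as an ansatz. I will verify the defining equation $V = U + \tfrac{t}{2}\tilde\Omega V$. The key algebraic facts that make the verification collapse are the two orthogonality-type identities
\begin{equation*}
    U^+H = U^+\Delta - U^+UA = A - A = 0, \qquad H^+U = \Delta^+U - A^+U^+U = -A - A^+ = 0,
\end{equation*}
where $A^+=-A$ because $\Delta\in T_U\SpSt(2n,2k)$ implies $U^+\Delta \in \sp(2k,\R)$, and $\Delta^+U=-U^+\Delta=-A$ from the same tangency condition.

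With these in hand, $U^+V = C$ (since $U^+H=0$), and $(I_{2n}-\tfrac12 UU^+)V = \tfrac{t}{2}HC + \tfrac12 UC$. Writing $\Delta=UA+H$ gives $(I_{2n}-\tfrac12 UU^+)\Delta = \tfrac12 UA + H$, so that the two terms in $\tilde\Omega(U,\Delta)$ of \eqref{eq:OmegatildefromDelta} applied to $V$ become
\begin{equation*}
    \tilde\Omega V = \left(\tfrac12 UA+H\right)C - U\left(\tfrac12 A^+ + \tfrac{t}{2}H^+H\right)C = UAC + HC - \tfrac{t}{2}UH^+HC,
\end{equation*}
using $A^+=-A$ and $H^+U=0$ once more. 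Multiplying by $\tfrac{t}{2}$ and grouping,
\begin{equation*}
    \tfrac{t}{2}\tilde\Omega V = \tfrac{t}{2}HC + U\left(\tfrac{t}{2}A - \tfrac{t^2}{4}H^+H\right)C.
\end{equation*}
By the very definition of $C$, the bracketed factor satisfies $(\tfrac{t}{2}A-\tfrac{t^2}{4}H^+H)C = C - I_{2k}$, hence $\tfrac{t}{2}\tilde\Omega V = \tfrac{t}{2}HC + UC - U = V - U$, which is precisely what had to be shown. Substituting into $\Ret^\SpSt_U(t\Delta) = -U+2V$ yields \eqref{eq:RSpStSmall}.

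The main obstacle I anticipate is not the computation but guessing the rank-$2k$ structure of $V$; once the ansatz is posed, the orthogonalities $U^+H=H^+U=0$ (which rely decisively on the tangency condition $A\in\sp(2k,\R)$) make every intermediate quantity collapse to a $2k\times 2k$ expression, and the identification $(\tfrac{t}{2}A-\tfrac{t^2}{4}H^+H)C = C-I_{2k}$ closes the argument. The invertibility of $C^{-1}$ for sufficiently small $t$ follows because $C^{-1}\to I_{2k}$ as $t\to 0$, which also gives $\Ret^\SpSt_U(0)=U$ and confirms the expression is well defined in a neighbourhood of the zero section.
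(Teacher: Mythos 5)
Your proof is correct, and it takes a genuinely different route from the paper. The paper derives \eqref{eq:RSpStSmall} constructively: it writes $\tilde\Omega(U,\Delta)=XY^T$ with $2n\times 4k$ factors, applies the Sherman--Morrison--Woodbury identity to pull the inverse down to $I_{4k}-\tfrac{t}{2}Y^TX$, inverts that block matrix explicitly via a Schur complement (which is where $\Theta=\tfrac{t^2}{4}H^+H-\tfrac{t}{2}A+I_{2k}$ first appears), and then simplifies. You instead use the M\"obius form $\cay(\tfrac{t}{2}\tilde\Omega)=-I_{2n}+2(I_{2n}-\tfrac{t}{2}\tilde\Omega)^{-1}$, guess the rank-structured ansatz $V=(\tfrac{t}{2}H+U)C$, and verify $(I_{2n}-\tfrac{t}{2}\tilde\Omega)V=U$ directly, with the tangency consequences $U^+H=0=H^+U$ (and $A^+=-A$, $\Delta^+U=-A$) doing all the work; uniqueness of the solution then gives the claim. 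What your approach buys is brevity and the complete avoidance of any $4k\times 4k$ bookkeeping; what it gives up is that it only verifies a formula one must already know, whereas the paper's computation produces it. Your invertibility discussion is adequate and in fact no weaker than the paper's (which applies SMW and the Schur-complement inversion without comment); if you want to tighten it, note that for $t\neq 0$ one has $\det\bigl(I_{2n}-\tfrac{t}{2}\tilde\Omega\bigr)=\det\bigl(I_{4k}-\tfrac{t}{2}Y^TX\bigr)=\pm\det\bigl(\tfrac{t^2}{4}H^+H-\tfrac{t}{2}A+I_{2k}\bigr)$, so whenever the Cayley retraction on the left is defined, the $2k\times 2k$ inverse on the right exists as well, not merely for small $t$.
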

\begin{proof}
    For $t=0$, the equality holds. In the following, assume $t\neq 0$. Similarly to the proof of Proposition~\ref{prop:StiefelReducedExp}, define $X =  \begin{bmatrix} \frac12 UA + H & -U\end{bmatrix} \in \R^{2n \times 4k}$ and $Y^T = \begin{bsmallmatrix} U^+ \\ \Delta^+(I_{2n}-\frac12 UU^+) \end{bsmallmatrix} \in \R^{4k \times 2n}$. Then  again $\tilde\Omega (U,\Delta)= XY^T$ as in \eqref{eq:OmegatildefromDelta}, and
    \begin{align*}
        Y^TX = \begin{bmatrix} \frac12 A & -I_{2k}\\ H^+H -\frac14A^2 & \frac12 A \end{bmatrix}.
    \end{align*}
    By definition
    \begin{align*}
        \Ret^\SpSt_U(t\Delta) &= \cay\left(\frac{t}{2}\tilde\Omega(U,\Delta)\right)U = \left(I_{2n}+\frac{t}{2}XY^T\right)\left(I_{2n}-\frac{t}{2}XY^T\right)^{-1}U\\
        &=\left(I_{2n}+\frac{t}{2}XY^T\right)\left(I_{2n} + \frac{t}{2}X(I_{4k}-\frac{t}{2}Y^TX)^{-1}Y^T\right)U\\
        &= U + tX(I_{4k}-\frac{t}{2}Y^TX)^{-1}Y^TU.
    \end{align*}
    It holds that
    \begin{align*}
        I_{4k}-\frac{t}{2}Y^TX = \begin{bmatrix} I_{2k} - \frac{t}{4}A & \frac{t}{2}I_{2k}\\-\frac{t}{2}(H^+H-\frac14A^2) & I_{2k}-\frac{t}{4}A \end{bmatrix}.
    \end{align*}
    Block-matrix inversion via the Schur complement yields
    \begin{align*}
        (I_{4k}-\frac{t}{2}Y^TX)^{-1} =
        \begin{scriptstyle}
        \begin{bmatrix}
            -\frac{1}{2}\Theta^{-1}(\frac{t}{2}A-2I_{2k}) & -\frac{t}{2}\Theta^{-1}\\
            -\frac{1}{2t}(\frac{t}{2}A-2I_{2k})\Theta^{-1}(\frac{t}{2}A - 2I_{2k})+\frac{2}{t}I_{2k} & -\frac{1}{2}(\frac{t}{2}A-2I_{2k})\Theta^{-1}
        \end{bmatrix}
        \end{scriptstyle}
    \end{align*}
    with $\Theta = \frac{t^2}{4}H^+H - \frac{t}{2}A + I_{2k}\in\R^{2k\times 2k}$.  
    Writing $Y^TU = \begin{bmatrix}I_{2k}\\ -\frac12 A\end{bmatrix}$ it follows that
    \begin{align*}
        (I_{4k}-\frac{t}{2}Y^TX)^{-1}Y^TU =
        \begin{bmatrix}
            \Theta^{-1}\\
            \frac{1}{t}(\frac{t}{2}A-2I_{2k})\Theta^{-1} + \frac{2}{t}I_{2k}
        \end{bmatrix}.
    \end{align*}
    Putting everything together, we obtain
    \begin{align*}
        \Ret^\SpSt_U(t\Delta) &= U + t(\frac{1}{2}UA+H - \frac{1}{t}U(\frac{t}{2}A - 2I_{2k} + \frac{t^2}{2}H^+H -tA + 2I_{2k}))\Theta^{-1}\\
        &= -U + (tH + 2U)(\frac{t^2}{4}H^+H - \frac{t}{2}A + I_{2k})^{-1},
    \end{align*}
    which shows the claim.
\end{proof}

Unlike the pseudo-Riemannian exponential \eqref{eq:StiefelReducedExp1} or the Riemannian exponential \eqref{eq:SpStGeodesicSmall}, we can invert the Cayley retraction \eqref{eq:RSpStSmall} in closed form. Apart from interpolation, this facilitates the calculation of local coordinates on $\SpSt(2n,2k)$.

\begin{proposition}\label{prop:Inv_Stiefel_Retraction}
    Let $U, V \in \SpSt(2n,2k)$. If $(I_{2k} + U^+V)^{-1}$ and $(I_{2k} + V^+U)^{-1}$ exist, it holds for
    \begin{equation*}
        A = 2((I_{2k} + V^+U)^{-1} - (I_{2k} + U^+V)^{-1}) \in \sp(2k,\R)
    \end{equation*}
    and
    \begin{equation*}
        H = 2((V + U)(I_{2k} + U^+V)^{-1} - U) \in \Hor^{\rho,h}_U\SpSt(2n,2k),
    \end{equation*}
    that
    \begin{equation}
    \label{eq:InverseStiefelRetraction}
        \L^\SpSt_U(V) := UA + H  \in T_U\SpSt(2n,2k)
    \end{equation}
    fulfills $\Ret^\SpSt_U(\L^\SpSt_U(V))= V$.
\end{proposition}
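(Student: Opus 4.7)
The plan is to prove the identity $\Ret^\SpSt_U(\L^\SpSt_U(V)) = V$ by direct substitution into the compact retraction formula \eqref{eq:RSpStSmall} with $t = 1$, namely
\begin{equation*}
    \Ret^\SpSt_U(\Delta) = -U + (H + 2U)\Theta^{-1}, \qquad \Theta := \tfrac14 H^+H - \tfrac12 A + I_{2k},
\end{equation*}
where $\Delta = UA + H$ with $A = U^+\Delta$ and $H = \Delta - UA$ satisfies $U^+H = 0$. So I first need to check that the proposed $A$ and $H$ satisfy $A \in \sp(2k,\R)$ and $U^+H = 0$, which ensures $\L^\SpSt_U(V) \in T_U\SpSt(2n,2k)$ with the right splitting. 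Writing $K := I_{2k} + U^+V$, the identity $(U^+V)^+ = V^+U$ (which follows from $A^{++}=A$ and $(AB)^+ = B^+A^+$) gives $K^+ = I_{2k} + V^+U$. Then $A = 2(K^{+,-1} - K^{-1})$ satisfies $A^+ = 2(K^{-1} - K^{+,-1}) = -A$, and $U^+H = 2(U^+V + I_{2k})K^{-1} - 2 I_{2k} = 2K K^{-1} - 2 I_{2k} = 0$.

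The main computation is to evaluate $\Theta$ in terms of $K$ and $K^+$. Using $(V+U)^+(V+U) = V^+V + V^+U + U^+V + U^+U = K + K^+$ together with $U^+(V+U) = K$ and $(V+U)^+ U = K^+$, a direct expansion of $H = 2(V+U)K^{-1} - 2U$ gives
\begin{equation*}
    H^+H = 4 K^{+,-1}(K + K^+)K^{-1} - 4 K K^{-1} - 4 K^{+,-1} K^+ + 4 I_{2k} = 4(K^{+,-1} + K^{-1}) - 4 I_{2k},
\end{equation*}
where the simplification uses $K^{+,-1}K^+ = I_{2k}$. Plugging this into $\Theta$, the terms $K^{+,-1}$ cancel and one obtains
\begin{equation*}
    \Theta = \bigl(K^{+,-1} + K^{-1} - I_{2k}\bigr) + \bigl(K^{-1} - K^{+,-1}\bigr) + I_{2k} = 2K^{-1}.
\end{equation*}

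Finally, since $H + 2U = 2(V+U)K^{-1}$, we get $(H + 2U)\Theta^{-1} = 2(V+U)K^{-1} \cdot \tfrac12 K = V + U$, and therefore $\Ret^\SpSt_U(\L^\SpSt_U(V)) = -U + (V+U) = V$. The only real obstacle is bookkeeping with the symplectic inverse—in particular, remembering that $(AB)^+ = B^+A^+$ and $(X^+)^{-1} = (X^{-1})^+$—so that the cancellations in the computations of $H^+H$ and $\Theta$ happen cleanly; once these identities are applied the verification is mechanical. The invertibility hypotheses on $I_{2k}+U^+V$ and $I_{2k}+V^+U$ are exactly what is needed so that $K$, $K^+$, and $\Theta = 2K^{-1}$ are well defined throughout.
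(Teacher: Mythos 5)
Your proposal is correct and follows essentially the same route as the paper's proof: verify $A^+=-A$ and $U^+H=0$, compute $\tfrac14 H^+H = (I_{2k}+U^+V)^{-1}+(I_{2k}+V^+U)^{-1}-I_{2k}$, conclude $\Theta = 2(I_{2k}+U^+V)^{-1}$, and substitute into the compact Cayley-retraction formula \eqref{eq:RSpStSmall} at $t=1$. The only difference is cosmetic (your abbreviation $K=I_{2k}+U^+V$ and the final cancellation written via $H+2U = 2(V+U)K^{-1}$ instead of expanding $\tfrac12 HK + UK$), so nothing further is needed.
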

\begin{proof}
    Since $A^+=-A$, it holds that $A \in \sp(2k,\R)$ and $U^+H = 2(U^+V + I_{2k})(I_{2k} + U^+V)^{-1} - I_{2k}) = 0$ implies $H \in \Hor^{\rho,h}_U\SpSt(2n,2k)$. Therefore, $\L^\SpSt_U(V)$ is a valid tangent vector. Since
    \begin{align*}
        \frac14 H^+H = (U^+V+I_{2k})^{-1} + (V^+U+I_{2k})^{-1} - I_{2k},
    \end{align*}
    it holds that
    \begin{align*}
     \frac14 H^+H - \frac12 A + I_{2k} = 2(U^+V+I_{2k})^{-1}.
    \end{align*}
    Therefore
    \begin{align*}
        \Ret^\SpSt_U(\L^\SpSt_U(V)) &= -U + (H+2U)(\frac14 H^+H - \frac12 A + I_{2k})^{-1}\\
        &= -U +\frac12 H(U^+V + I_{2k}) + U(U^+V + I_{2k}) = V,
    \end{align*}
    which shows the claim.
\end{proof}

\subsection{Cayley retraction on the real symplectic Grassmann manifold}
With the quotient manifold approach to the symplectic Stiefel manifold and the definition of the symplectic Grassmann manifold, we can show an additional property of $\Ret^\SpSt$: It maps horizontal tangent vectors (with respect to the pseudo-Riemannian metric $h^\SpSt$ from \eqref{eq:pRmetricSt}) to curves with horizontal tangent vectors everywhere. We can therefore use it to calculate approximations of the pseudo-Riemannian symplectic Grassmann geodesics lifted to the symplectic Stiefel manifold.
\begin{proposition}\label{prop:horizontalretraction}
    Let $U \in \SpSt(2n,2k)$ and $\Delta \in \Hor^{\rho,h}_U\SpSt(2n,2k)$. Furthermore, let $\tilde\Omega(U,\Delta)$ is as in \eqref{eq:OmegatildefromDelta}. For
    \begin{equation*}
        \gamma(t):= \Ret^\SpSt_U(t\Delta)=\cay\left(\frac{t}{2}\tilde\Omega(U,\Delta)\right)U,
    \end{equation*}
    it holds that $\dot\gamma(t) \in \Hor^{\rho,h}_{\gamma(t)}\SpSt(2n,2k)$ for all $t$.
    \begin{proof}
        We suppress the dependence of $\tilde\Omega$ on $U$ and $\Delta$ for better legibility. We have to show that $\gamma(t)^+\dot\gamma(t)=0$ for all $t$. It holds that $\cay(\frac{t}{2}\tilde\Omega)$ commutes with $\tilde\Omega$ and with $\left(I_{2n}\pm\frac{t}{2}\tilde\Omega\right)^{-1}$, respectively, and $\cay(-\frac{t}{2}\tilde\Omega)\cay(\frac{t}{2}\tilde\Omega)=I_{2n}$. Since $\gamma(t)^+=U^+\cay(-\frac{t}{2}\tilde\Omega)$, it follows that
        \begin{displaymath}
            \gamma(t)^+\dot\gamma(t)=\frac12 U^+\left(\left(I_{2n}+\frac{t}{2}\tilde\Omega\right)^{-1} + \left(I_{2n}-\frac{t}{2}\tilde\Omega\right)^{-1}\right)\tilde\Omega U.
        \end{displaymath}
    Furthermore $\gamma(t)^+\dot\gamma(t)=0$ is equivalent to $U\gamma(t)^+\dot\gamma(t)=0$.
   It holds that $\tilde \Omega \in \sp_{UU^+}(2n,\R)$,  since $\Delta \in \Hor^{\rho,h}_U\SpSt(2n,2k)$, which means $\tilde \Omega = \tilde \Omega UU^+ + UU^+ \tilde \Omega$. Then
    \begin{equation*}
        UU^+(I_{2n} \pm \frac{t}{2}\tilde\Omega)= (I_{2n}\mp \frac{t}{2}\tilde\Omega)UU^+
        \pm \frac{t}{2}\tilde\Omega,
    \end{equation*}
    which implies
    \begin{equation*}
        UU^+(I_{2n} \pm \frac{t}{2}\tilde\Omega)^{-1} = (I_{2n}\mp \frac{t}{2}\tilde\Omega)^{-1}UU^+ \mp \frac{t}{2} (I_{2n}\mp \frac{t}{2}\tilde\Omega)^{-1}\tilde\Omega(I_{2n}\pm\frac{t}{2}\tilde\Omega)^{-1}. 
    \end{equation*}
    Therefore 
    \begin{align*}
            U\gamma(t)^+\dot\gamma(t)
            &=\frac12 UU^+\left(\left(I_{2n}+\frac{t}{2}\tilde\Omega\right)^{-1} + \left(I_{2n}-\frac{t}{2}\tilde\Omega\right)^{-1}\right)\tilde\Omega U\\
            &= \frac12 \left(\left(I_{2n}-\frac{t}{2}\tilde\Omega\right)^{-1} + \left(I_{2n}+\frac{t}{2}\tilde\Omega\right)^{-1}\right)UU^+\tilde\Omega U
            =0,
    \end{align*}
    because  $UU^+\tilde\Omega U  = \tilde\Omega(I_{2n}-UU^+)U = 0$.
    \end{proof}
\end{proposition}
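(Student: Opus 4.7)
The plan is to verify the horizontality condition $\gamma(t)^+\dot\gamma(t) = 0$ directly, which (by the decomposition $T_U\SpSt(2n,2k) = \{UA + U^sB\}$ from \eqref{eq:TangentSpaceStiefel}) exactly isolates the vertical $\sp(2k,\R)$-component $A = U^+\dot\gamma(t)$. First, I would combine the closed-form expression for $\dot\gamma(t)$ from the preceding proposition with the identity $\gamma(t)^+ = U^+\cay(-\tfrac{t}{2}\tilde\Omega)$, which holds because $\cay$ carries $\sp(2n,\R)$ into $\Sp(2n,\R)$ and satisfies $\cay(X)^+ = \cay(X)^{-1} = \cay(-X)$ for every $X \in \sp(2n,\R)$. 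Since $\cay(-\tfrac{t}{2}\tilde\Omega)$ is a rational function of $\tilde\Omega$ and cancels the factor $\cay(\tfrac{t}{2}\tilde\Omega)$ hidden inside $\gamma(t)$, the task collapses to showing
\[
 U^+\left[\left(I_{2n}+\tfrac{t}{2}\tilde\Omega\right)^{-1} + \left(I_{2n}-\tfrac{t}{2}\tilde\Omega\right)^{-1}\right]\tilde\Omega\, U \;=\; 0,
\]
where $\tilde\Omega = \tilde\Omega(U,\Delta)$.

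Because $U^+U = I_{2k}$, this vanishing is equivalent to vanishing after premultiplication by $U$, i.e.\ with $U^+$ turned into the symplectic projector $P := UU^+$. The crucial input here is that $\Delta \in \Hor^{\rho,h}_U\SpSt(2n,2k)$ forces $U^+\Delta = 0$ and hence $\Delta^+U = 0$, which, substituted into \eqref{eq:OmegatildefromDelta}, yields $\tilde\Omega \in \sp_P(2n)$, that is,
\[
 \tilde\Omega \;=\; \tilde\Omega P + P\tilde\Omega.
\]
I expect this identity to be the workhorse of the entire manipulation.

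The main obstacle is the algebraic step of commuting $P$ past the Cayley-type resolvents $(I_{2n}\pm\tfrac{t}{2}\tilde\Omega)^{-1}$. My approach is to start from the cheap identity
\[
 P\left(I_{2n}\pm\tfrac{t}{2}\tilde\Omega\right) \;=\; \left(I_{2n}\mp\tfrac{t}{2}\tilde\Omega\right)P \;\pm\; \tfrac{t}{2}\tilde\Omega,
\]
which is precisely the point at which $\tilde\Omega = \tilde\Omega P + P\tilde\Omega$ enters. Inverting on both sides gives the swap rule
\[
 P\left(I_{2n}\pm\tfrac{t}{2}\tilde\Omega\right)^{-1} \;=\; \left(I_{2n}\mp\tfrac{t}{2}\tilde\Omega\right)^{-1}P \;\mp\; \tfrac{t}{2}\left(I_{2n}\mp\tfrac{t}{2}\tilde\Omega\right)^{-1}\tilde\Omega\left(I_{2n}\pm\tfrac{t}{2}\tilde\Omega\right)^{-1}.
\]
Adding the two sign choices, the two leftover $\tilde\Omega$-pieces cancel by commutativity of $\tilde\Omega$ with its own resolvents, so $P$ slides through the symmetric bracket intact (up to a harmless exchange of $\pm$ inside).

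Once $P$ has been moved to the right, the whole expression becomes a product of the same bracket with $P\tilde\Omega U$. Applying $\tilde\Omega = \tilde\Omega P + P\tilde\Omega$ one last time, together with $PU = UU^+U = U$, yields $P\tilde\Omega U = \tilde\Omega U - \tilde\Omega PU = \tilde\Omega U - \tilde\Omega U = 0$, closing the argument.
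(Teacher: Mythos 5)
Your proposal is correct and follows essentially the same route as the paper's proof: reduce to $\gamma(t)^+\dot\gamma(t)=0$ via $\gamma(t)^+=U^+\cay(-\tfrac{t}{2}\tilde\Omega)$, premultiply by $U$ to work with $P=UU^+$, use $\tilde\Omega=\tilde\Omega P+P\tilde\Omega$ (from horizontality of $\Delta$) to derive the same resolvent swap rule, cancel the leftover terms by commutativity, and finish with $P\tilde\Omega U=0$. The only blemish is the opening parenthetical identifying the vertical component as $A=U^+\dot\gamma(t)$ rather than $\gamma(t)^+\dot\gamma(t)$ (the base point is $\gamma(t)$, not $U$), but your actual computation targets the correct condition, so this is harmless.
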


Projecting the retraction from Proposition \ref{prop:horizontalretraction} to the symplectic Grassmann manifold leads to a retraction on $\SpGr(2n,2k)$.
\begin{proposition}\label{prop:Grassmann_Retraction}
    Let $P \in \SpGr(2n,2k)$ and $\Gamma \in T_P\SpGr(2n,2k)$. Then
    \begin{equation*}
        \Ret^\SpGr_P(\Gamma):= \cay\left(\frac12[\Gamma,P]\right)P\cay\left(-\frac12[\Gamma,P]\right)
    \end{equation*}
    defines a retraction on $\SpGr(2n,2k)$. For $U \in \rho^{-1}(P) \subset \SpSt(2n,2k)$, it fulfills $\Ret^\SpGr_P(\Gamma) = \rho(\Ret^\SpSt_U(\Gamma^\hor_U))$. The curve $\gamma(t):=\Ret^\SpGr_P(t\Gamma)$ fulfills
    \begin{equation*}
        \dot \gamma(t) = \left[\frac12[\Gamma,P]\left(\left(I_{2n}-\frac{t}{2}[\Gamma,P]\right)^{-1} + \left(I_{2n}+\frac{t}{2}[\Gamma,P]\right)^{-1}\right), \gamma(t)\right].
    \end{equation*}
\end{proposition}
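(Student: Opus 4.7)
The plan is to reduce everything to the already-proven Stiefel retraction $\Ret^\SpSt$ via the submersion $\rho$, by first establishing the lifting identity stated in the proposition. To do this, I would use Proposition~\ref{prop:TangentSpaceGrassmann} to write $\Gamma = [\tilde\Omega, P]$ with $\tilde\Omega \in \sp_P(2n)$. A short calculation using $\tilde\Omega^+ = -\tilde\Omega$ and $P^+ = P$ gives $\Gamma^+ = \Gamma$, so the horizontal lift to the Stiefel manifold is simply $\Gamma^\hor_U = \Gamma U$. Substituting $\Delta = \Gamma U$ into \eqref{eq:OmegatildefromDelta} and using $P = UU^+$, the $\tfrac12 P\Gamma P$ terms cancel and one obtains $\tilde\Omega(U, \Gamma U) = \Gamma P - P\Gamma = [\Gamma, P]$. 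Therefore $\Ret^\SpSt_U(\Gamma^\hor_U) = \cay(\tfrac12[\Gamma, P])\,U$ by \eqref{eq:RSpStBig}, and applying $\rho$ together with the identity $\cay(X)^+ = \cay(-X)$ for $X \in \sp(2n,\R)$ (which follows from $X^+ = -X$ and the commutativity of $I_{2n} \pm X$) yields $\Ret^\SpGr_P(\Gamma) = \cay(\tfrac12[\Gamma,P])\,P\,\cay(-\tfrac12[\Gamma,P])$, as claimed.

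The retraction axioms follow with little additional effort. Mapping into $\SpGr(2n,2k)$ is automatic from the lifting identity, and $\Ret^\SpGr_P(0) = P$ is immediate from $\cay(0) = I_{2n}$. For the differential at $t=0$, the chain rule together with $\tfrac{d}{ds}\cay(sX)\big\vert_{s=0} = 2X$ yields
\[
    \tfrac{d}{dt}\Ret^\SpGr_P(t\Gamma)\big\vert_{t=0} = [\Gamma,P]\,P - P\,[\Gamma,P] = [[\Gamma,P], P],
\]
and this equals $\Gamma$ after expanding the iterated bracket and using the anticommutator relation $\Gamma P + P\Gamma = \Gamma$ (which follows from $\tilde\Omega \in \sp_P(2n)$) together with $P\Gamma P = 0$ (from $P^2 = P$).

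For the velocity formula, set $A := \tfrac12[\Gamma,P] \in \sp_P(2n)$ and $\tilde\gamma(t) := \Ret^\SpSt_U(t\Gamma^\hor_U) = \cay(tA)\,U$, so that $\gamma(t) = \tilde\gamma(t)\tilde\gamma(t)^+$. Proposition~\ref{prop:horizontalretraction} supplies the explicit expression $\dot{\tilde\gamma}(t) = \bigl((I_{2n}+tA)^{-1} + (I_{2n}-tA)^{-1}\bigr)A\,\tilde\gamma(t)$. Leibniz then gives $\dot\gamma(t) = \dot{\tilde\gamma}(t)\tilde\gamma(t)^+ + \tilde\gamma(t)\dot{\tilde\gamma}(t)^+$. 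Using $A^+ = -A$ and $\bigl((I_{2n} \pm tA)^{-1}\bigr)^+ = (I_{2n} \mp tA)^{-1}$, the second term rearranges to $-\gamma(t)\,A\bigl((I_{2n}-tA)^{-1} + (I_{2n}+tA)^{-1}\bigr)$, and since $A$ commutes with its resolvents, collecting terms assembles the right-hand side into the commutator $\bigl[A\bigl((I_{2n}-tA)^{-1} + (I_{2n}+tA)^{-1}\bigr), \gamma(t)\bigr]$, which is exactly the claimed formula. The one slightly delicate point is the bookkeeping of the symplectic inverse under products (namely $(XY)^+ = Y^+ X^+$) in the Leibniz step; once this is handled correctly the formula drops out.
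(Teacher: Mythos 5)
Your proposal is correct and follows essentially the same route as the paper: the paper also obtains the retraction by projecting the symplectic Stiefel Cayley retraction through $\rho$, checks $\Ret^\SpGr_P(0)=P$, and gets the second retraction axiom from the velocity formula via $\dot\gamma(0)=[[\Gamma,P],P]=\Gamma$; you merely make explicit the details the paper leaves as ``direct calculation'' (in particular $\tilde\Omega(U,\Gamma U)=[\Gamma,P]$, $\cay(X)^+=\cay(-X)$, and the Leibniz computation for $\dot\gamma$). One cosmetic slip: the derivative formula $\dot{\tilde\gamma}(t)$ you invoke is stated in the proposition containing \eqref{eq:RSpStBig}, not in Proposition~\ref{prop:horizontalretraction}, which only establishes horizontality of that curve.
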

\begin{proof}
    The first retraction property $\Ret^\SpGr_P(0)=P$ is immediate. The formula for $\dot\gamma(t)$ follows from a direct calculation, whence $\D (\Ret^\SpGr_P)_0(\Gamma)=\frac{\D}{\D t}\Ret^\SpGr_P(t\Gamma)\vert_{t=0}=\dot\gamma(0)=[[\Gamma,P],P]=\Gamma$. This implies the second retraction property. 
\end{proof}

Similarly to Proposition \ref{prop:GrassmannLog}, we can invert the retraction on the symplectic Grassmann manifold in closed form. As in the symplectic Stiefel case, this defines local coordinates on $\SpGr(2n,2k)$.
In the following results, let $\sqrtm(\cdot)$ denote the principal matrix square root.

\begin{proposition}\label{prop:Inv_Grassmann_Retraction}
    Let $P,F \in \SpGr(2n,2k)$. If for
    \begin{equation}
    \label{eq:Inv_Grassmann_Retraction}
        \tilde\Omega := 2 \cay^{-1}\left(\sqrtm\left((I_{2n}-2F)(I_{2n}-2P)\right)\right),
    \end{equation}
    it holds that $\tilde\Omega \in \sp_P(2n)$, then $F = \Ret^\SpGr_P([\tilde\Omega,P])$.
\end{proposition}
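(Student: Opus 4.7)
My plan is to mimic the structure of Proposition~\ref{prop:GrassmannLog}, replacing the matrix exponential there by the Cayley transform and exploiting the fact that $I_{2n}-2P$ is an involution that anti-commutes with every element of $\sp_P(2n)$.

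First, I would rewrite the retraction at the nested bracket $[\tilde\Omega,P]$ in a more tractable form. Since $\tilde\Omega\in\sp_P(2n)$ means $\tilde\Omega = \tilde\Omega P + P\tilde\Omega$, multiplying this relation by $P$ on either side and using $P^2 = P$ forces $P\tilde\Omega P = 0$. Expanding the iterated commutator then gives
\begin{equation*}
    [[\tilde\Omega,P],P] = \tilde\Omega P - 2P\tilde\Omega P + P\tilde\Omega = \tilde\Omega P + P\tilde\Omega = \tilde\Omega,
\end{equation*}
so that $\Ret^\SpGr_P([\tilde\Omega,P]) = \cay(\tfrac12\tilde\Omega)\, P\,\cay(-\tfrac12\tilde\Omega)$.

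Next, the decisive structural fact is that $\tilde\Omega = \tilde\Omega P + P\tilde\Omega$ is equivalent to $(I_{2n}-2P)\tilde\Omega = -\tilde\Omega(I_{2n}-2P)$, i.e., conjugation by the involution $I_{2n}-2P$ sends $\tilde\Omega$ to $-\tilde\Omega$. Because $\cay$ is a rational function of $\tilde\Omega$, this anti-commutation propagates to
\begin{equation*}
    (I_{2n}-2P)\,\cay\!\left(-\tfrac12\tilde\Omega\right) = \cay\!\left(\tfrac12\tilde\Omega\right)(I_{2n}-2P),
\end{equation*}
which I would verify directly by writing $\cay(\pm\tfrac12\tilde\Omega) = (I_{2n}\pm\tfrac12\tilde\Omega)(I_{2n}\mp\tfrac12\tilde\Omega)^{-1}$ and pushing $I_{2n}-2P$ through each factor.

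Combining these two ingredients, the proof concludes quickly:
\begin{equation*}
    I_{2n} - 2\Ret^\SpGr_P([\tilde\Omega,P]) = \cay\!\left(\tfrac12\tilde\Omega\right)(I_{2n}-2P)\cay\!\left(-\tfrac12\tilde\Omega\right) = \cay\!\left(\tfrac12\tilde\Omega\right)^{2}(I_{2n}-2P).
\end{equation*}
By the defining equation \eqref{eq:Inv_Grassmann_Retraction}, $\cay(\tfrac12\tilde\Omega)^{2} = (I_{2n}-2F)(I_{2n}-2P)$, and since $(I_{2n}-2P)^{2} = I_{2n}$ the right-hand side equals $I_{2n}-2F$, yielding $\Ret^\SpGr_P([\tilde\Omega,P]) = F$.

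The only delicate point is the braiding identity in the second paragraph; everything else is purely algebraic once one observes that the assumption $\tilde\Omega\in\sp_P(2n)$ is exactly the statement that $\tilde\Omega$ is anti-invariant under conjugation by $I_{2n}-2P$. Well-definedness of $\cay^{-1}$ and of the principal square root in~\eqref{eq:Inv_Grassmann_Retraction} is built into the hypothesis, so no further spectral analysis is needed.
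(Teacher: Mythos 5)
Your proposal is correct and follows essentially the same route as the paper: both rest on $\cay\bigl(\tfrac12\tilde\Omega\bigr)^2=(I_{2n}-2F)(I_{2n}-2P)$, the anti-commutation of $\tilde\Omega$ with the involution $I_{2n}-2P$ coming from $\tilde\Omega\in\sp_P(2n)$, and $(I_{2n}-2P)^2=I_{2n}$. The only difference is that you spell out explicitly the reduction $[[\tilde\Omega,P],P]=\tilde\Omega$ (via $P\tilde\Omega P=0$), which the paper leaves implicit.
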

\begin{proof}
    It holds that $\cay\left(\frac12\tilde\Omega\right)^2 = (I_{2n}-2F)(I_{2n}-2P)$.
    Since $(I_{2n}-2P)^2=I_{2n}$ and 
    \begin{equation*}
        (I_{2n}-2P)\cay\left(\frac12\tilde\Omega\right)(I_{2n}-2P)=\cay\left(-\frac12\tilde\Omega\right),
    \end{equation*}
    it follows that
    \begin{equation*}
    \begin{split}
        I_{2n}-2F &= \cay\left(\frac12\tilde\Omega\right)^2(I_{2n}-2P) = \cay\left(\frac12 \tilde\Omega\right)(I_{2n}-2P)\cay\left(-\frac12 \tilde\Omega\right)\\
        &=I_{2n}-2\cay\left(\frac12 \tilde\Omega\right)P\cay\left(-\frac12 \tilde\Omega\right),
    \end{split}
    \end{equation*}
    which implies the claimed result.
\end{proof}

We can also directly invert $\Ret^\SpGr_P(\Gamma)$ on symplectic Stiefel representatives. 

\begin{proposition}\label{prop:Inv_Grassmann_Retraction_lifted}
    Let $U, V \in \SpSt(2n,2k)$. If
    \begin{equation*}
        N:= (U^+V)^{-1}\sqrtm(U^+VV^+U) \in \Sp(2k,\R)
    \end{equation*}
    and
    \begin{equation*}
        H := 2(VN+U)(U^+VN + I_{2k})^{-1} - 2U \in \Hor^{\rho,h}_U\SpSt(2n,2k)
    \end{equation*}
    are well-defined, it holds that
    \begin{equation*}
        \Ret^\SpSt_U(H) = VN.
    \end{equation*}
\end{proposition}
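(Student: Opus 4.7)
The plan is to reduce this to the closed-form inverse $\L^\SpSt_U$ of the Cayley retraction that was established in Proposition~\ref{prop:Inv_Stiefel_Retraction}, applied not to $V$ itself but to the rotated target $V':=VN\in\SpSt(2n,2k)$. The intuition is that $N\in\Sp(2k,\R)$ represents a change of symplectic basis within the same equivalence class in $\SpGr(2n,2k)$, and the particular $N$ of the statement should be chosen exactly so that the skew component $A$ in the formula for $\L^\SpSt_U(V')$ drops out. The horizontal part then matches the $H$ of the statement, and Proposition~\ref{prop:Inv_Stiefel_Retraction} delivers the claim at once.

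First I would verify $N\in\Sp(2k,\R)$. Writing $M:=U^+V$, so that $N=M^{-1}\sqrtm(MM^+)$, and using that the symplectic inverse is an involutive algebra anti-homomorphism so that $(MM^+)^+=MM^+$, one obtains $N^+=\sqrtm(MM^+)(M^+)^{-1}$ and hence $NN^+=M^{-1}MM^+(M^+)^{-1}=I_{2k}$. Next I would establish the pivotal identity $U^+V'=U^+VN=\sqrtm(MM^+)$ and, crucially, that this matrix satisfies $(U^+V')^+=U^+V'$.

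Granting these two facts, I plug $V'=VN$ into Proposition~\ref{prop:Inv_Stiefel_Retraction}. The $^+$-symmetry of $U^+V'$ implies $(V'^+U)^+=U^+V'$, hence $(I_{2k}+V'^+U)^{-1}=((I_{2k}+U^+V')^+)^{-1}$ coincides under $^+$ with $(I_{2k}+U^+V')^{-1}$, so
\[
A=2\bigl((I_{2k}+V'^+U)^{-1}-(I_{2k}+U^+V')^{-1}\bigr)=0,
\]
while the horizontal component
\[
H=2\bigl((V'+U)(I_{2k}+U^+V')^{-1}-U\bigr)=2(VN+U)(I_{2k}+U^+VN)^{-1}-2U
\]
matches the statement verbatim. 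Thus $\L^\SpSt_U(VN)=UA+H=H$, and Proposition~\ref{prop:Inv_Stiefel_Retraction} yields $\Ret^\SpSt_U(H)=\Ret^\SpSt_U(\L^\SpSt_U(VN))=VN$.

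The main obstacle is justifying $(\sqrtm(MM^+))^+=\sqrtm(MM^+)$. I would argue this by observing that the symplectic inverse factors as $A^+=J_{2k}^{-1}A^TJ_{2k}$, so that for any analytic function $f$ one has $f(A^+)=J_{2k}^{-1}f(A^T)J_{2k}=J_{2k}^{-1}f(A)^TJ_{2k}=(f(A))^+$; since $MM^+$ is $^+$-symmetric and, by the well-definedness assumption, its spectrum avoids the negative real axis, this commutation property applied with $f=\sqrtm$ gives the desired identity. As a sanity check I would verify that $U^+H=2(U^+VN+I_{2k})(I_{2k}+U^+VN)^{-1}-2I_{2k}=0$, confirming that $H$ indeed lies in $\Hor^{\rho,h}_U\SpSt(2n,2k)$ as claimed.
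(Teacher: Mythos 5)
Your proof is correct, and it takes a slightly different route than the paper. The paper proves the claim by direct computation: it verifies $N\in\Sp(2k,\R)$, shows that $U^+VN=\sqrtm(U^+VV^+U)$ is fixed by the symplectic inverse, computes $\tfrac14 H^+H = 2(I_{2k}+U^+VN)^{-1}-I_{2k}$, and substitutes into the reduced Cayley formula \eqref{eq:RSpStSmall} (with $A=U^+H=0$) to obtain $\Ret^\SpSt_U(H)=VN$. You instead reduce the statement to Proposition~\ref{prop:Inv_Stiefel_Retraction} applied to the rotated representative $V'=VN$: since $U^+V'=\sqrtm(U^+VV^+U)$ satisfies $(U^+V')^+=U^+V'$, the vertical component $A$ in $\L^\SpSt_U(V')$ vanishes and the horizontal component is exactly the $H$ of the statement, so $\Ret^\SpSt_U(H)=\Ret^\SpSt_U(\L^\SpSt_U(VN))=VN$. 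Both arguments hinge on the same key fact (the $^+$-symmetry of $U^+VN$, for which you supply more justification than the paper via $f(A^+)=(f(A))^+$ for primary matrix functions), but your organization makes the conceptual point explicit that $N$ is precisely the $\Sp(2k,\R)$-rotation that kills the vertical part of the inverse Stiefel retraction, whereas the paper's computation is self-contained and avoids re-invoking the earlier proposition. Two cosmetic remarks: the identity $(V'^+U)^+=U^+V'$ holds unconditionally (it is the analogue of $(A^TB)^T=B^TA$), so the cleaner statement of your middle step is simply $V'^+U=(U^+V')^+=U^+V'$, which makes the two terms in $A$ equal; and you should note explicitly that the invertibility of $I_{2k}+V'^+U$ required by Proposition~\ref{prop:Inv_Stiefel_Retraction} follows from that of $I_{2k}+U^+VN$ (assumed through the well-definedness of $H$) together with this equality.
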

\begin{proof}
    Since $NN^+ = (U^+V)^{-1}(U^+VV^+U)(V^+U)^{-1}=I_{2k}$, it holds that $N \in \Sp(2k,\R)$. Furthermore
    \begin{align*}
        N^+V^+U = (U^+VN)^+ = (\sqrtm(U^+VV^+U))^+ = \sqrtm(U^+VV^+U) = U^+VN.
    \end{align*}
    Therefore $\frac14 H^+H = 2(I_{2k}+U^+VN)^{-1} -I_{2k}$, which implies
    \begin{align*}
        \Ret^\SpSt_U(H) = -U + (H+2U)(\frac14 H^+H + I_{2k})^{-1} = -U + VN + U = VN.
    \end{align*}
\end{proof}

The difference between the connecting curves from Proposition~\ref{prop:Inv_Stiefel_Retraction} and Proposition~\ref{prop:Inv_Grassmann_Retraction_lifted} is visualized in Figure~\ref{fig:connectingCurves}.

\begin{figure}
 \centering
 \includegraphics{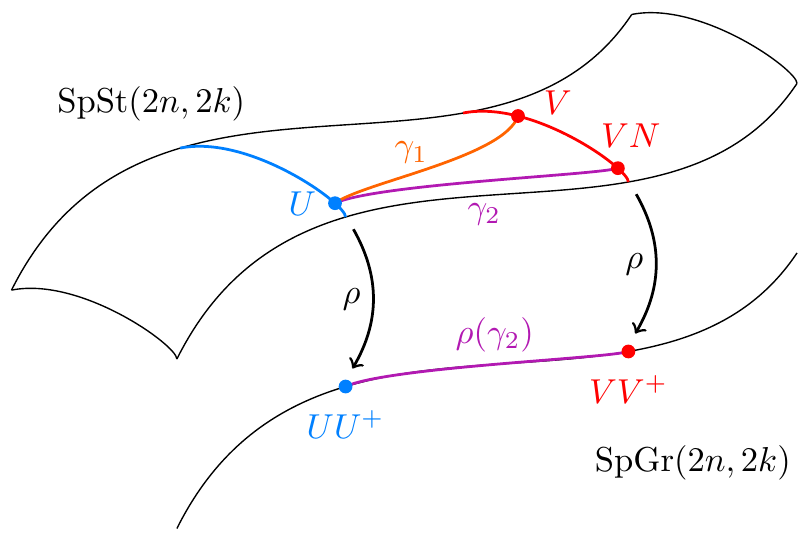}
 \caption{Connecting curves according to the inverse retractions Proposition~\ref{prop:Inv_Stiefel_Retraction} ($\gamma_1$) and Proposition~\ref{prop:Inv_Grassmann_Retraction_lifted} ($\gamma_2$).}
 \label{fig:connectingCurves}
\end{figure}

\section{Numerical Experiments}
\label{sec:NumericalTests}
In this section, we study the feasibility of different retractions on $\SpSt(2n,2k)$ and investigate optimization problems via gradient descent on $\SpSt(2n,2k)$ and $\SpGr(2n,2k)$, respectively.
All experiments are conducted with MATLAB version R2021a on a laptop with Ubuntu 18.04, an Intel® Core™ i7-8850H CPU and 16GB RAM.
We generate random Hamiltonian matrices via $\Omega = \begin{bsmallmatrix} A & B\\ C & -A^T \end{bsmallmatrix}$, where $A, B$ and $C$ are generated by \verb|randn(n,n)|, and then $B$ and $C$ are symmetrized.
For reproducability, all random matrices are constructed with the random stream \verb|s = RandStream('mt19937ar')|.

\subsection{Feasibility of different retractions on the symplectic Stiefel manifold}
\label{subsec:feasibility}

We compare the numerical feasibility of the Riemannian geodesic \eqref{eq:SpStGeodesicSmall} with respect to $g^\SpSt$, the Cayley-retraction \eqref{eq:RSpStSmall}, the pseudo-Riemannian geodesic \eqref{eq:StiefelReducedExp1} and the quasi-geodesic retraction \eqref{eq:QuasiGeodesicRetraction} from \cite{GaoSonAbsilStykel2020symplecticoptimization}. To this end, we generate a (pseudo) random point on $\SpSt(2n,2k)$ via $U = \cay(\Omega)E$, where $\Omega \in \sp(2n,\R)$ is scaled to $||\Omega||_F=1$. 
We furthermore generate a (pseudo) random tangent vector $\Delta \in T_U\SpSt(2n,2k)$, also scaled to $||\Delta||_F = 1$. For the chosen retractions $\Ret$, we calculate $U(t) = \Ret_U(t\Delta)$ with $t \in [0, 10^3]$ and plot the feasibility $||U(t)^+U(t) - I_{2n}||_F$. The average over $10$ runs is shown in Figure~\ref{fig:RetractionComparisonStiefel} for $n = 1000, k = 20$ (left) and $n = 1000, k = 200$ (right). It can be seen that the Riemannian geodesic, the pseudo-Riemannian geodesic and the quasi-geodesic retraction, which all rely on the matrix exponential, fail numerically to stay on $\SpSt(2n,2k)$ for tangent vectors of Frobenius-norm larger than $\mathcal{O}(10^2)$. The Cayley-Retraction, while less feasible at some points, fulfills the manifold condition up to an error of about $10^{-8}$ for tangent vectors of any tested size on $\SpSt(2000,40)$ and up to an error of about $10^{-4}$ on $\SpSt(2000,400)$.

\begin{figure}
\label{fig:RetractionComparisonStiefel}
\centering
 \includegraphics[width=.49\textwidth]{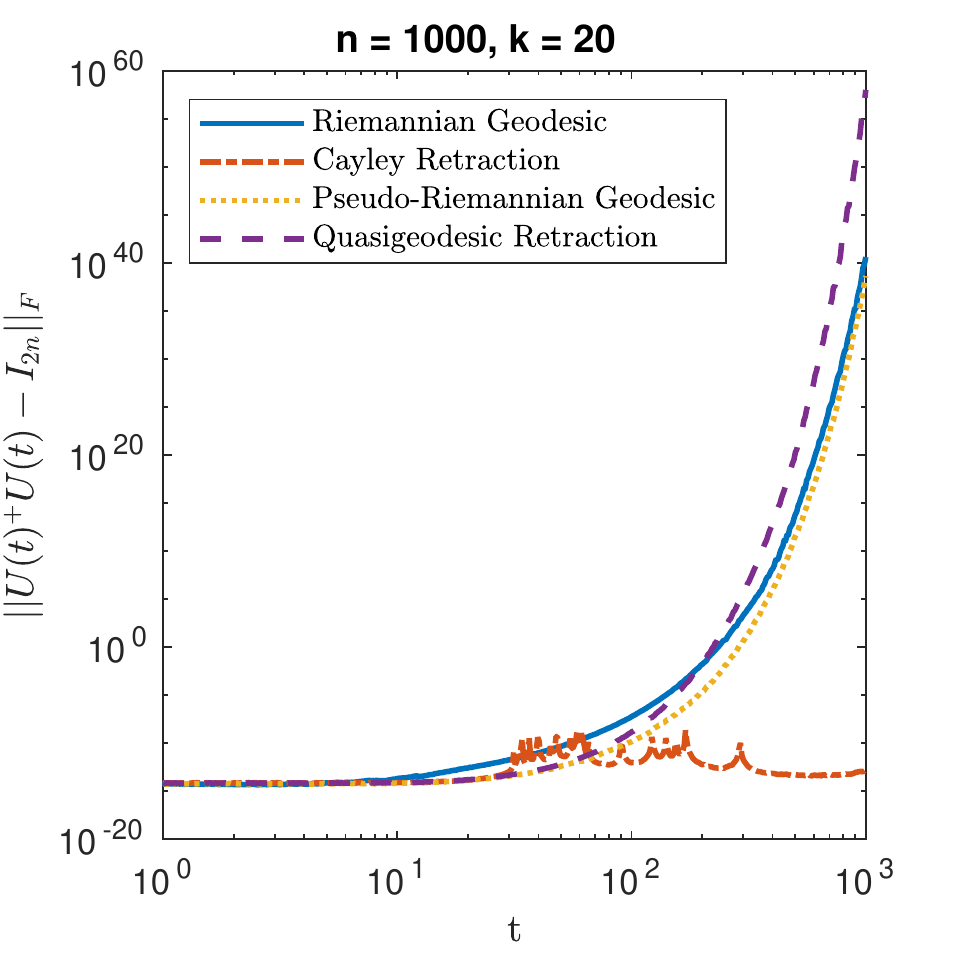}
 \includegraphics[width=.49\textwidth]{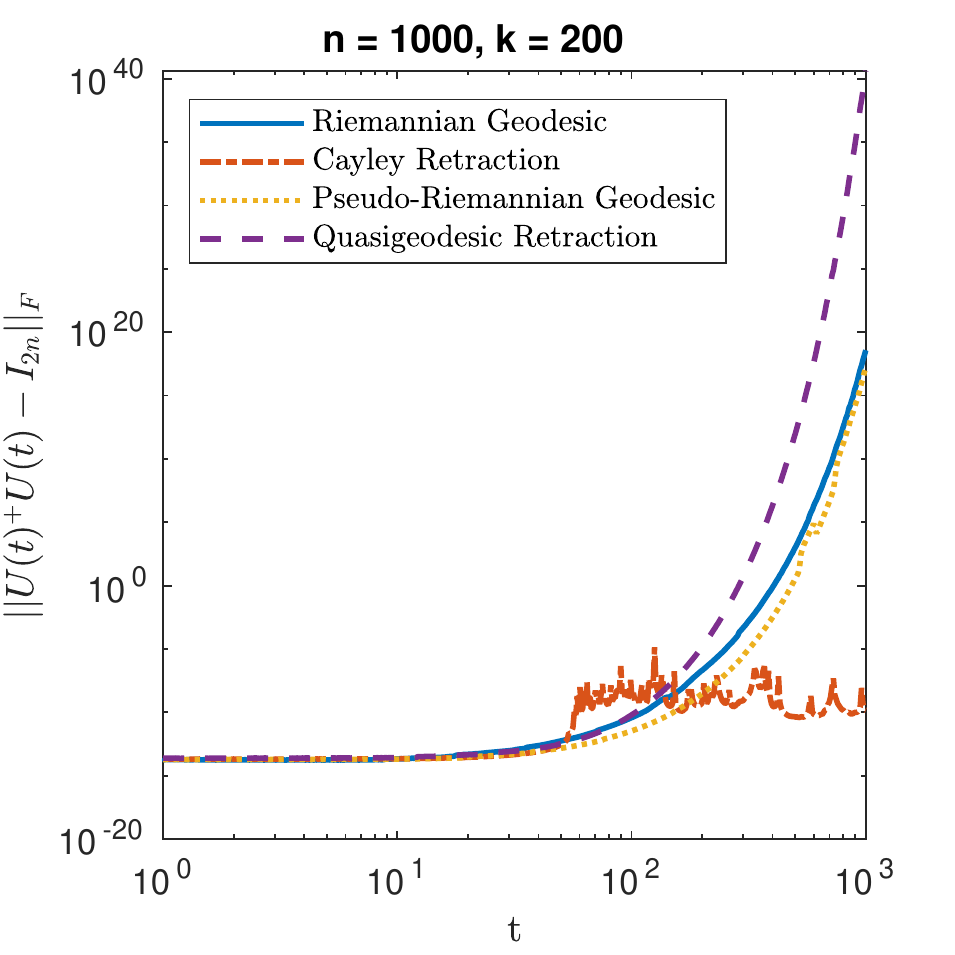}
 \caption{(cf. Subsection \ref{subsec:feasibility}) Comparison of the feasibility $\|U(t)^+U(t) - I_{2n}\|_F$ of different retractions $\Ret$ on $\SpSt(2000,40)$ and $\SpSt(2000,400)$, where $U(t) = \Ret_U(t\Delta)$ and $\|\Delta\|_F = 1$.  The data represent an average over $10$ runs, $U(t)$ is evaluated at $500$ logarithmically spaced steps.}
\end{figure}

\subsection{Gradient descent on the real symplectic Stiefel manifold}
\label{subsec:GradientDescentStiefel}
We tackle an academic instance of the `nearest symplectic matrix' problem
\begin{equation*}
    \min_{U \in \SpSt(2n,2k)}||U-A||_F^2
\end{equation*}
 via a Riemannian gradient descent. For this, we set \verb|A = randn(2*n,2*k)| and then normalize \verb|A = A/norm(A,2)|, as in \cite{GaoSonAbsilStykel2020symplecticoptimization}. The initial point for starting the optimization procedure is set to be \verb|U0 = cay(X/2)E|, where $X \in \sp(2n,\R)$ is a random $2n \times 2n$ Hamiltonian matrix, scaled by \verb|X = X/norm(X,'fro')|.
 As gradient descent algorithm we use \cite[Alg. 1]{GaoSonAbsilStykel2020symplecticoptimization} with monotone line search and stopping criterion \cite[Eqs. (6.1) and (6.2)]{GaoSonAbsilStykel2020symplecticoptimization}. For the reader's convenience, we restate the procedure here as Algorithm~\ref{alg:GradientDescent} in the precise form in which we use it.
\begin{algorithm}
    \caption{Gradient descent algorithm \cite[Alg. 1]{GaoSonAbsilStykel2020symplecticoptimization}}
    \label{alg:simple_quasigeodesic}
    \begin{algorithmic}[1]
        \Require $U_0 \in \SpSt(2n,2k)$, $f\colon \SpSt(2n,2k) \to \R$, retraction $\Ret$, $\beta,\delta \in (0,1)$, $0<\gamma_{\min}<\gamma_{\max}$, initial step size $\gamma_0^{ABB} = f(U_0)$, maximal iterations $N \in \N$, Riemannian metric $\SP{\cdot,\cdot}_U$ with gradient $\grad_f$, step parameters $h_{\min} < h_{\max} \in \Z$, tolerance parameters $\epsilon, \epsilon_x, \epsilon_f > 0$
        \For{$0\leq k \leq N$}
            \State $\Delta_k = -\grad_f(U_k)$
            \If{$k>0$}
            \State $S_k = U_k-U_{k-1}$ and $Y_k = \grad_f(U_k)-\grad_f(U_{k-1})$
            \If{$k$ is odd}
            \State $\gamma_k^{ABB} = \frac{\SP{S_k,S_k}}{|\SP{S_k,Y_k}|}$, where $\SP{\cdot,
            \cdot}$ denotes the Euclidean inner product.
            \Else
            \State $\gamma_k^{ABB} = \frac{|\SP{S_k,Y_k}|}{\SP{Y_k,Y_k}}$, where $\SP{\cdot,
            \cdot}$ denotes the Euclidean inner product.
            \EndIf
            \EndIf
            \State $\gamma_k = \max(\gamma_{\min},\min(\gamma_k^{ABB},\gamma_{\max}))$.
            \For{$h_{\min} \leq h \leq h_{\max}$}
                \State $t_k= \gamma_k \delta^h$
                \If{$f(\Ret_{U_k}(t_k\Delta_k))\leq f(U_k) - \beta t_k \SP{\Delta_k,\Delta_k}_{U_k}$}
                \State Break
                \EndIf
            \EndFor
            \State $U_{k+1} = \Ret_{U_k}(t_k\Delta_k)$
            \If{$||\grad_f(U_k)||_F < \epsilon$}
                \If{$\frac{|f(U_k) - f(U_{k+1})|}{|f(U_k)|+1} < \epsilon_f$ and $\frac{||U_k - U_{k+1}||_F}{\sqrt{2n}} < \epsilon_x$}
                \State Break as converged.
                \EndIf
            \EndIf
        \EndFor
        \Ensure Iterates $U_k$.
    \end{algorithmic}
    \label{alg:GradientDescent}
\end{algorithm}
 As the trial step size $\gamma_k$, we use the alternating BB method $\gamma_k^{ABB}$  \cite[Equation (6.4)]{GaoSonAbsilStykel2020symplecticoptimization}, with the respective gradient. The other method parameters are set to $\delta = 0.1, \beta = 10^{-4}, \gamma_{\text{min}}=10^{-15}$ and $\gamma_{\text{max}}=10^{15}$, as in \cite[Subsection 6.1]{GaoSonAbsilStykel2020symplecticoptimization}. The step parameters are set to $h_{\min} = 0$ and $h_{\max} = 5$, and the tolerance parameters to $\epsilon = 10^{-6}$, $\epsilon_x = 10^{-6}$ and $\epsilon_f = 10^{-12}$, respectively.

We compare gradient descent for the Riemannian metric $g^\SpSt$ with geodesic stepping and Cayley stepping, respectively, with gradient descent from \cite{GaoSonAbsilStykel2020symplecticoptimization} with Cayley stepping. For the gradient descent according to \cite{GaoSonAbsilStykel2020symplecticoptimization}, we choose the optimal settings stated in this reference, i.e., the canonical-like metric $g_\rho$ with $\rho = \frac12$ and gradient (I), according to \cite[Subsection 6.2.2]{GaoSonAbsilStykel2020symplecticoptimization}.
In the actual implementation of all methods included in this comparison, care has been taken that the action of large matrices like $J_{2n}$ and $I_{2n}$ is applied directly,
so that these matrices are never formed explicitly.

Figure \ref{fig:GradientDescentStiefel} displays the objective function value versus the iteration count (left) and the convergence history according to the gradient norm (right), respectively. For comparison purposes, all methods are run for a fixed number of $60$ iterations. It can be seen that the algorithms deliver similar results in regard of the convergence by iterations, depending on the chosen tolerance. The run time however differs: In Table~\ref{tab:numEx1}, we compare the three methods and state the average iterations and run time until numerical convergence over $10$ runs. We furthermore denote the relative deviation from the respective minimum over all three methods after convergence. It can be seen that for $\SpSt(2000,40)$ and $\SpSt(2000,400)$, gradient descent with Cayley stepping is the fastest method regarding run time, while
Geodesic descent is the slowest. For $\SpSt(2000,400)$, the run time for geodesic stepping increases drastically, since \eqref{eq:SpStGeodesicSmall} requires the matrix exponential of both a $8k \times 8k$ and a $4k \times 4k$ matrix.
\begin{table}[!ht]
\renewcommand{\arraystretch}{1.3}
\begin{small}
\begin{center}
\caption{Numerical performance for the cases considered in Section \ref{subsec:GradientDescentStiefel}, taking averages over $10$ runs. The (pseudo-)random data are generated for $n=1000$, and either $k=20$ or $k=200$, respectively. The minimum is the respective minimum over all three methods after convergence.
}
\begin{tabular}{l l >{\columncolor{gray!10}}l l >{\columncolor{gray!10}}l l >{\columncolor{gray!10}}l}
 \toprule
 & $k=20$ & $200$ & $20$ & $200$ & $20$ & $200$\\
  \cmidrule{2-3} \cmidrule{4-5} \cmidrule{6-7} Method & \multicolumn{2}{l}{rel. deviation from minimum} & \multicolumn{2}{l}{iterations} & \multicolumn{2}{l}{run time (s)}  \\
 \midrule
 $g^{SpSt}$, Geodesic &     $1.6614\cdot 10^{-15}$ & $5.4229 \cdot 10^{-16}$ & $25.5$ & $46.5$ & $0.4091$ s & $46.4378$ s \\
 $g^{SpSt}$, Cayley   &     $3.8021\cdot 10^{-15}$ & $1.1783 \cdot 10^{-15}$ & $25.4$ & $48.6$ & $0.2008$ s & $9.1229$ s\\
 $g_\rho$ from \cite{GaoSonAbsilStykel2020symplecticoptimization}           &     $6.8733\cdot 10^{-14}$ & $1.1445 \cdot 10^{-14}$ & $32.1$ & $42.5$ & $0.3098$ s & $11.3064$ s \\
 \bottomrule
%
\end{tabular}
\label{tab:numEx1}
\end{center}
\end{small}
\end{table}

In Figure~\ref{fig:GradientDescentStiefeltime}, we compare the convergence over time for one optimizer run on $\SpSt(2000,40)$. For each step, the run time is measured over one full iteration of the outer for-loop in lines 1 to 24 in Algorithm~\ref{alg:GradientDescent}. It can be seen that gradient descent with respect to the Riemannian metric $g^\SpSt$ with Cayley stepping converges the fastest in terms of the  run time. The iteration count for Cayley and geodesic stepping with respect to $g^\SpSt$ are comparable.

For Figure~\ref{fig:GradientDescentStiefeltime2A}, we repeat the experiment from Figure~\ref{fig:GradientDescentStiefeltime} with the setting
featured in \cite[Figure 6]{GaoSonAbsilStykel2020symplecticoptimization}, i.e., we
scale $A$ to \verb|A = 2*A/norm(A,2)|. In this case, the iteration count until convergence stays approximately the same for gradient descent with respect to the quotient metric $g^\SpSt$, while it increases considerably for the canonical-like metric $g_\rho$.

\begin{figure}
\label{fig:GradientDescentStiefel}
 \includegraphics[width=\textwidth]{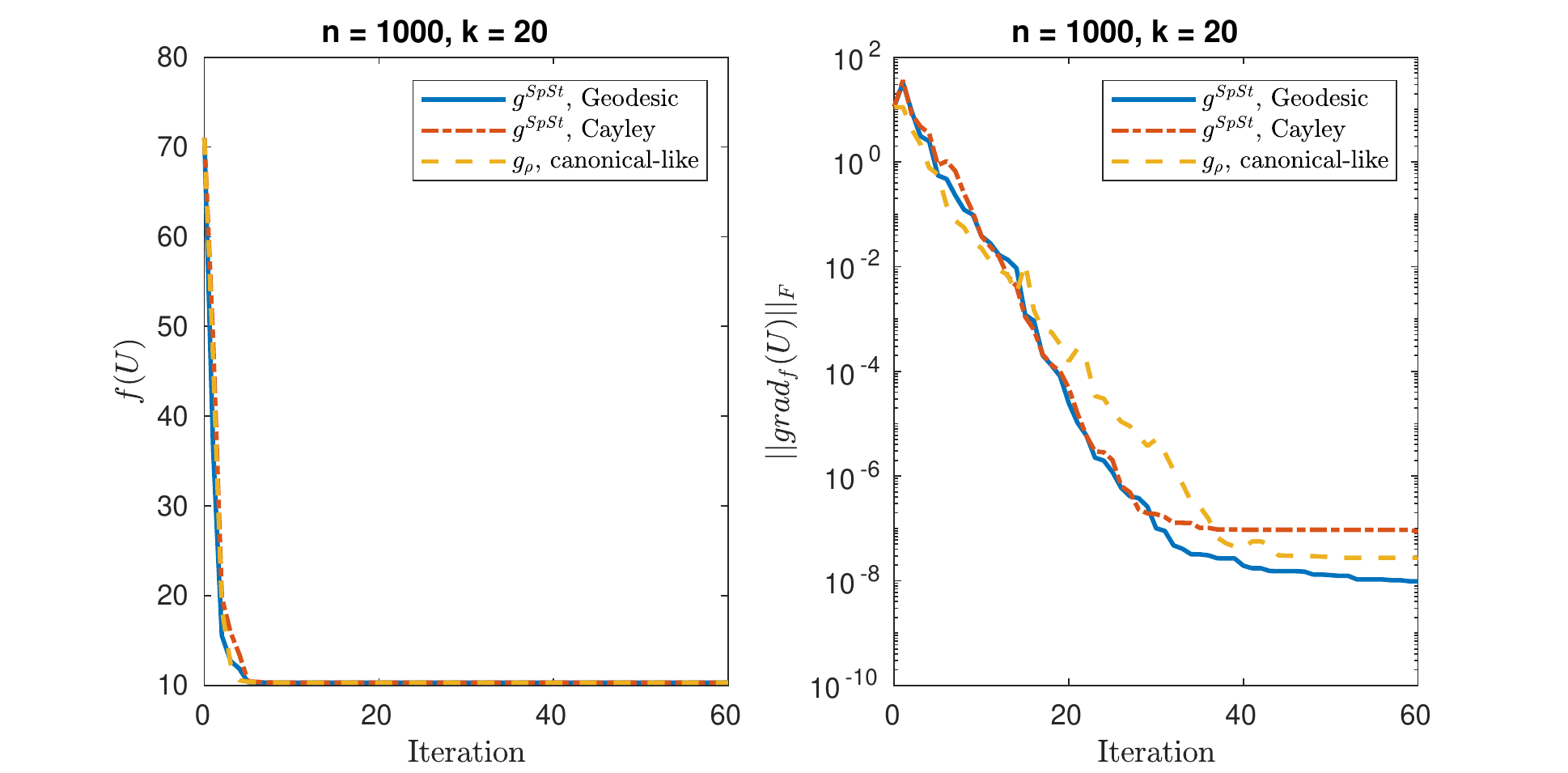}
 \caption{(cf. Subsection \ref{subsec:GradientDescentStiefel}) Comparison of Riemannian gradient descent on $\SpSt(2000,40)$ to find the symplectic Stiefel matrix closest to a random matrix $A$. The data represent an average over $10$ runs. Here, $g_\rho$ denotes the canonical-like metric from \cite{GaoSonAbsilStykel2020symplecticoptimization} with Cayley stepping.}
\end{figure}

\begin{figure}
\label{fig:GradientDescentStiefeltime}
 \includegraphics[width=\textwidth]{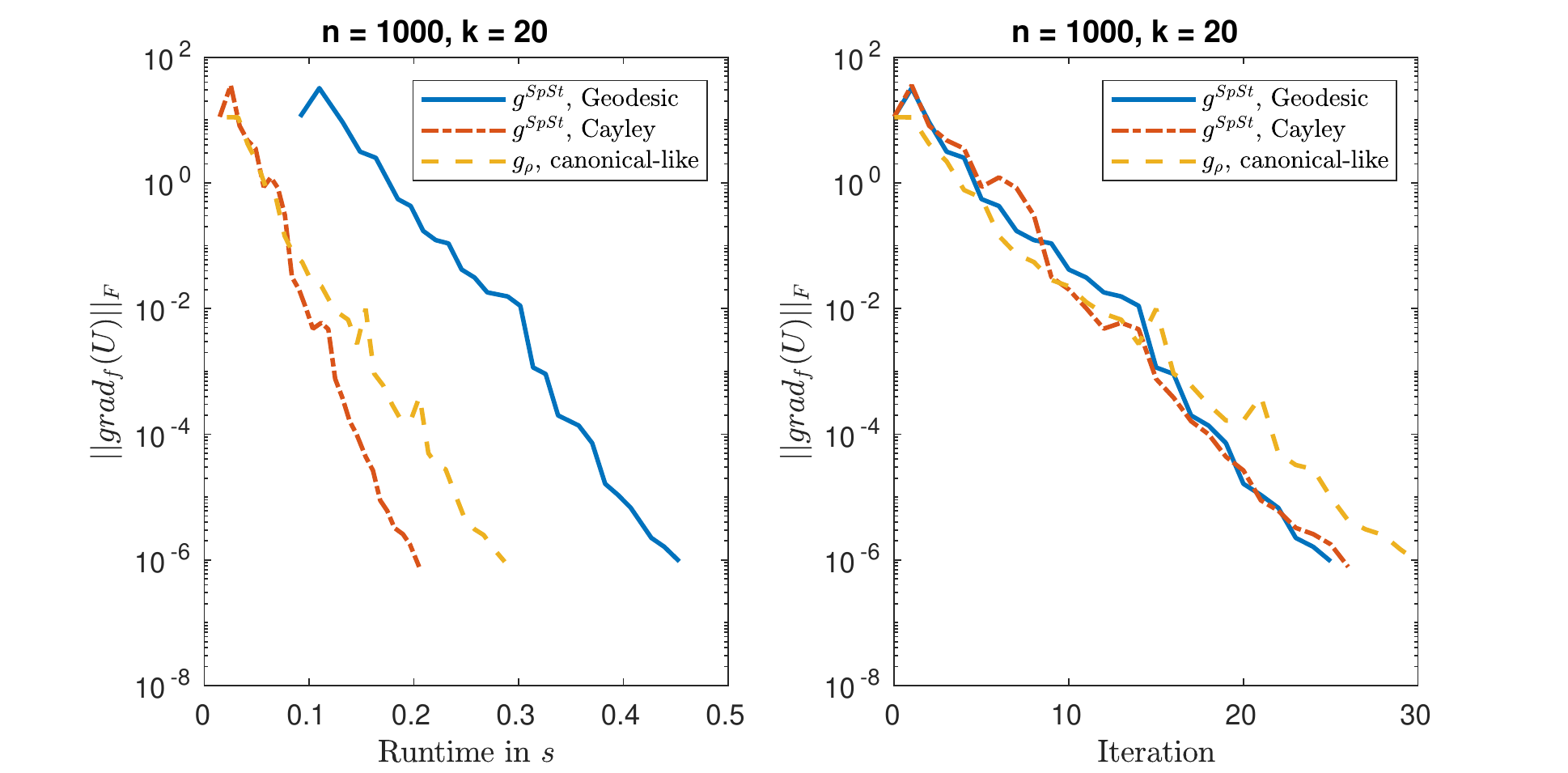}
 \caption{(cf. Subsection \ref{subsec:GradientDescentStiefel}) Comparison of Riemannian gradient descent on $\SpSt(2000,40)$ to find the symplectic Stiefel matrix closest to a random matrix $A$ versus time. Here, $g_\rho$ denotes the canonical-like metric from \cite{GaoSonAbsilStykel2020symplecticoptimization} with Cayley stepping.}
 \end{figure}
 
 \begin{figure}
\label{fig:GradientDescentStiefeltime2A}
 \includegraphics[width=\textwidth]{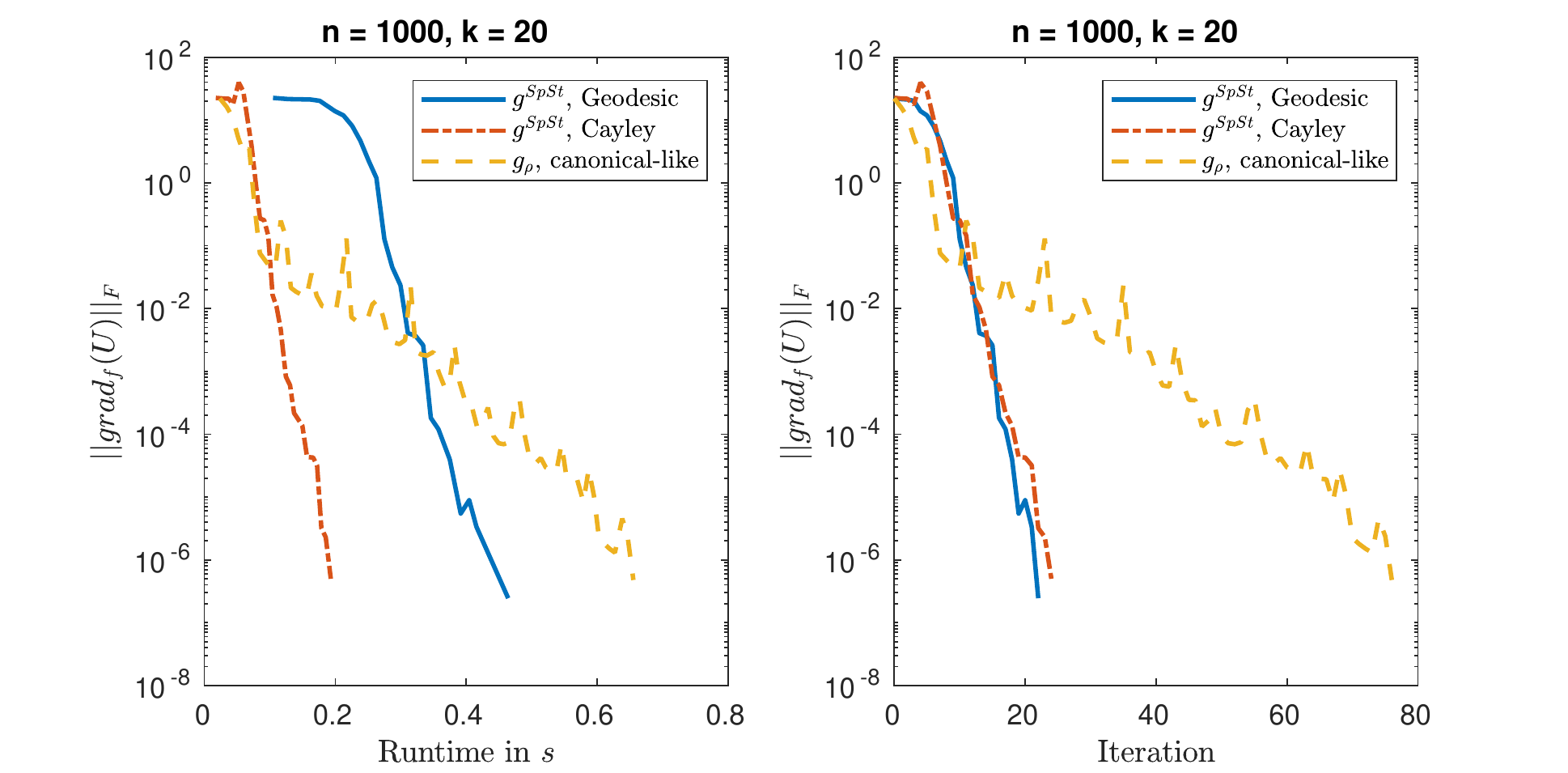}
 \caption{(cf. Subsection \ref{subsec:GradientDescentStiefel}) 
 Analogous to  Figure \ref{fig:GradientDescentStiefeltime} but with the target matrix $A$ scaled to $||A||_2 = 2$. Here, $g_\rho$ denotes the canonical-like metric from \cite{GaoSonAbsilStykel2020symplecticoptimization} with Cayley stepping.}
 \end{figure}

\subsection{Gradient descent on the real symplectic Grassmann manifold}
\label{subsec:GradientDescentGrassmann}
In this subsection, we consider optimization via gradient descent 
on the real symplectic Grassmann manifold.
More precisely, we search for the optimal symplectic subspace for representing a 
given data matrix $S \in \R^{2n \times 2n}$, i.e.,
\begin{equation}
\label{eq:min_SpGr}
 \min_{U \in \SpSt(2n,2k)} ||S - UU^+S ||_F^2.
\end{equation}
This problem is associated with computing a \emph{proper symplectic decomposition},
a task that is central in Hamiltonian model order reduction \cite{peng2016symplectic}. 
Here, we work in an academic setting, where the target matrix $S$ is generated as a random symplectic subspace representative plus an error term, i.e. 
\begin{equation*}
    S = AA^+ + \mathcal{E},
\end{equation*} 
where $A \in \SpSt(2n,2k)$ is a random symplectic Stiefel matrix found in the same manner as the initial point $U_0$, and $\mathcal{E}$ is a random $n\times n$-matrix, divided by its 2-norm. The parameters for the gradient descent algorithm are the same as in Subsection~\ref{subsec:GradientDescentStiefel}. The resulting average of the function value and the convergence history over $10$ runs with a fixed number of $40$ iterations is shown in Figure~\ref{fig:GradientDescentGrassmann}. It can be seen that gradient descent for all methods produces similar results in regards of the iteration count. For the gradient descent from \cite{GaoSonAbsilStykel2020symplecticoptimization} and for the gradient descent according to $g^\SpSt$ with Cayley stepping, we ignore the quotient structure and treat \eqref{eq:min_SpGr} as a minimization problem on $\SpSt(2n,2k)$. The run time and iteration count of the methods is compared in Table~\ref{tab:numEx2}, similarly to Subsection~\ref{subsec:GradientDescentStiefel}. We also compare the convergence over run time for a single optimizer run in Figure~\ref{fig:GradientDescentGrassmanntime}. It can be seen that gradient descent with Cayley stepping according to $g^\SpSt$ or $g^\SpGr$ converges fastest and both methods perform comparable to the method of \cite{GaoSonAbsilStykel2020symplecticoptimization}. As is to be expected, for $k = 200$,  geodesic stepping is again considerably slower. Note however that for all methods, processing the $2n \times 2n$ input matrix $S$ requires a high base level run time.

\begin{remark}
    It is also possible to tackle optimization problems with pseudo-Riemannian methods \cite{fiori2011,GaoLekHengYe2018semiriemannian}.
    For our experiments however, we achieved better result with the Riemannian methods.
    Nevertheless, pseudo-Riemannian optimization might prove beneficial in some settings.
\end{remark}

\begin{figure}
\label{fig:GradientDescentGrassmann}
 \includegraphics[width=\textwidth]{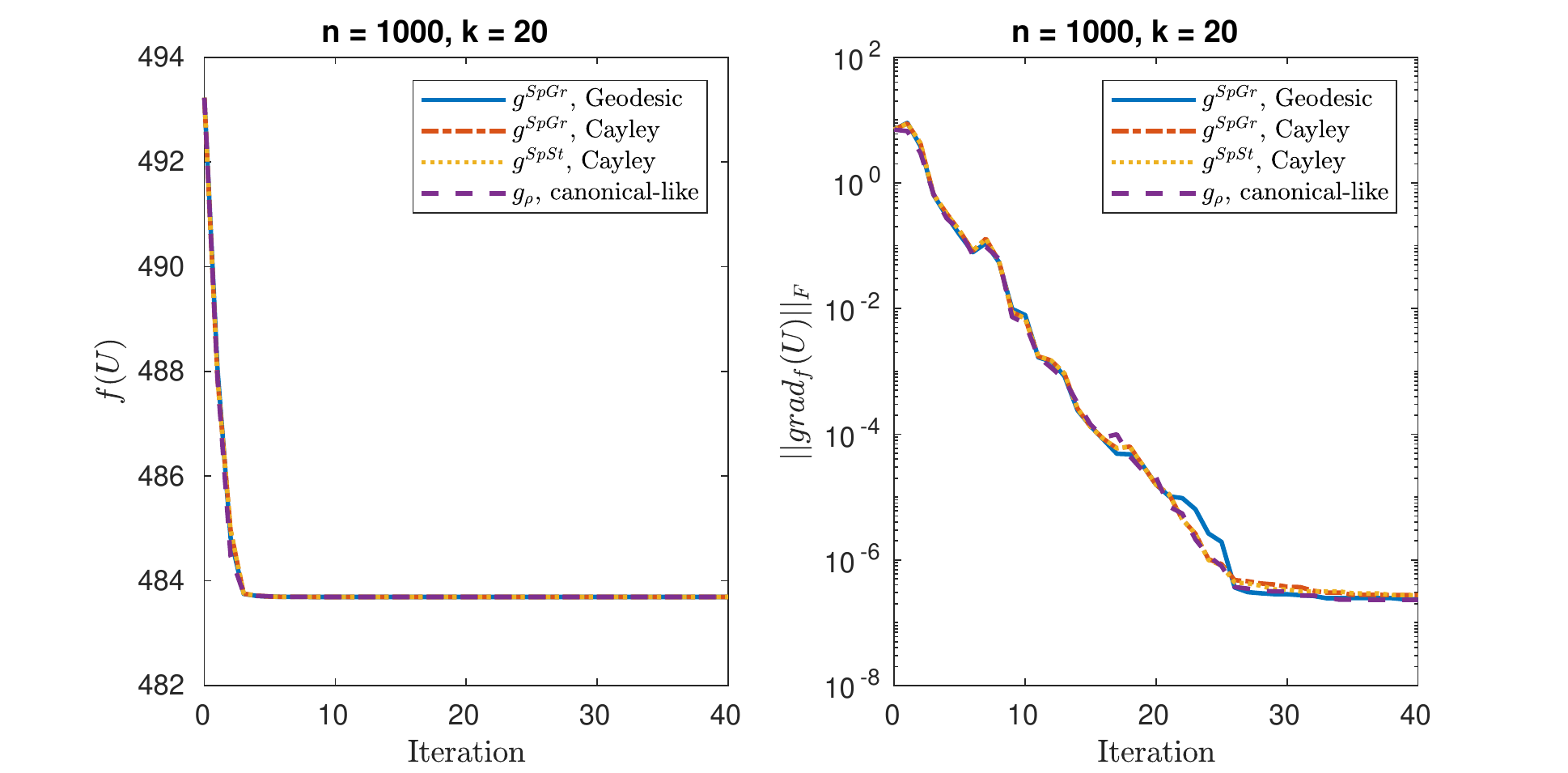}
 \caption{(cf. Subsection \ref{subsec:GradientDescentGrassmann}) Comparison of Riemannian gradient descent on $\SpGr(2000,40)$ to find the optimal subspace representing a matrix $S$. The data represent an average over $10$ runs. Here, $g_\rho$ denotes the canonical-like metric from \cite{GaoSonAbsilStykel2020symplecticoptimization} with Cayley stepping.}
\end{figure}

\begin{figure}
\label{fig:GradientDescentGrassmanntime}
 \includegraphics[width=\textwidth]{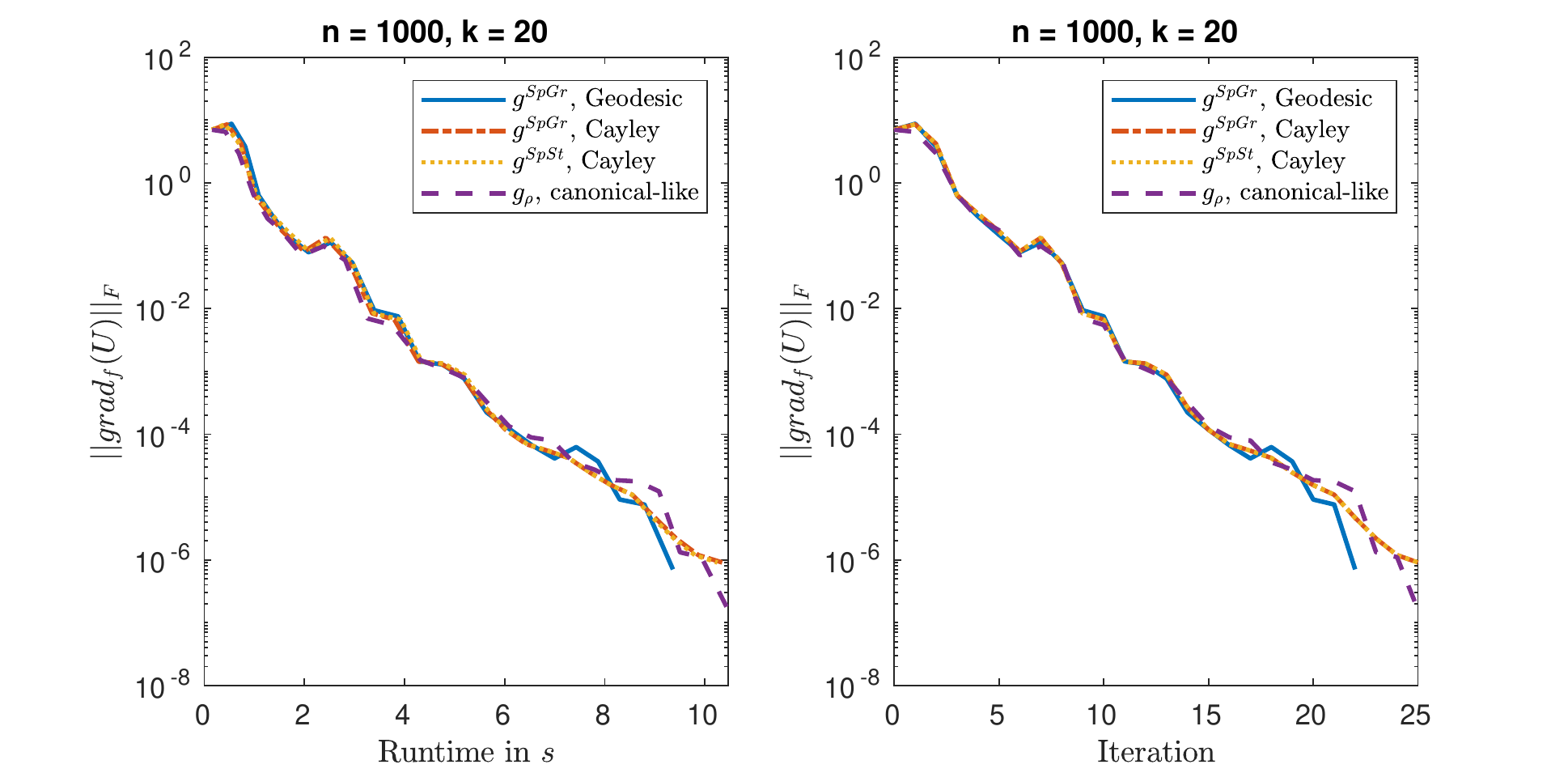}
 \caption{(cf. Subsection \ref{subsec:GradientDescentGrassmann}) Comparison of Riemannian gradient descent on $\SpGr(2000,40)$ over time to find the optimal subspace representing a matrix $S$. Here, $g_\rho$ denotes the canonical-like metric from \cite{GaoSonAbsilStykel2020symplecticoptimization} with Cayley stepping.}
\end{figure}

\begin{table}[!ht]
\renewcommand{\arraystretch}{1.3}
\begin{small}
\begin{center}
\caption{Numerical performance for the cases considered in Section \ref{subsec:GradientDescentGrassmann}, taking averages over $10$ runs. The (pseudo-)random data are generated for $n=1000$, and either $k=20$ or $k=200$, respectively. The minimum is the respective minimum over all five methods after convergence.
}
\begin{tabular}{l l >{\columncolor{gray!10}}l l >{\columncolor{gray!10}}l l >{\columncolor{gray!10}}l}
 \toprule
 & $k=20$ & $200$ & $20$ & $200$ & $20$ & $200$\\
  \cmidrule{2-3} \cmidrule{4-5} \cmidrule{6-7} Method & \multicolumn{2}{l}{rel. deviation from minimum} & \multicolumn{2}{l}{iterations} & \multicolumn{2}{l}{run time (s)}  \\
 \midrule
 $g^{SpGr}$, Geod. &     $7.0799\cdot 10^{-17}$ & $6.9878\cdot 10^{-17}$ & $25.2$ & $33.7$ & $11.4656$ & $61.9820$ \\
 $g^{SpGr}$, Cayley   &     $2.1071\cdot 10^{-16}$ & $1.0422\cdot 10^{-16}$ & $23.9$ & $35.4$ & $10.5513$ & $36.5796$\\
 $g^{SpSt}$, Geod. &     $1.0617\cdot 10^{-16}$ & $1.9201\cdot 10^{-16}$ & $25.2$ & $34.1$ &  $11.4330$ & $64.3123$ \\
 
  $g^{SpSt}$, Cayley &     $2.1071\cdot 10^{-16}$ & $1.0422\cdot 10^{-16}$ & $23.9$  & $35.4$ & $10.5545$ & $35.3957$ \\

 $g_\rho$ from \cite{GaoSonAbsilStykel2020symplecticoptimization}           &     $3.5375\cdot 10^{-17}$ & $1.9164\cdot 10^{-16}$ & $24.8$ & $34.7$ & $10.9243$ & $37.2282$ \\
 \bottomrule
%
\end{tabular}
\label{tab:numEx2}
\end{center}
\end{small}
\end{table}

\section{Conclusion}
\label{sec:Conc}
We introduced a novel pseudo-Riemannian framework for the real symplectic Stiefel manifold $\SpSt(2n,2k)$. In analogy to the classical Stiefel and Grassmann manifolds, we introduced the real symplectic Grassmann manifold $\SpGr(2n,2k)$. 
For a natural pseudo-Riemannian metric, we derived the corresponding geodesics.
With the formulas at hand, we explained the Cayley retraction as an approximation of the pseudo-Riemannian geodesics and found an efficiently computable expression for the retraction, which turned out to be invertible in closed form. 


Secondly, we introduced a new Riemannian framework for both $\SpSt(2n,2k)$ and $\SpGr(2n,2k)$, coming from a right-invariant Riemannian metric on $\Sp(2n,\R)$, and derived the corresponding Riemannian geodesics.
Since to the best of the authors' knowledge, the Riemannian geodesics for no other Riemannian metric on $\SpSt(2n,2k)$ are known, this opens up new possibilities for theoretical studies and applications.

In the experiments, we showed that gradient descent with the Riemannian geo\-de\-sics or optimized Cayley retraction outperforms the state-of-the-art method from \cite{GaoSonAbsilStykel2020symplecticoptimization} in some cases and delivers comparable results in others. Cayley stepping with respect to the Riemannian metric $g^\SpSt$ on $\SpSt(2n,2k)$ converges in general the fastest among all methods, regarding the run time.

The invertible retractions provide local coordinates on the manifolds  $\SpSt(2n,2k)$ and $\SpGr(2n,2k)$, respectively.
This renders it possible to apply tangent space methods, e.g. for interpolation purposes. 
A potential area of application of such tangent space interpolation
is parametric model order reduction of Hamiltonian systems.
The proposed coordinate transformations allow to approach this problem analogously to parametric model order reduction of general dynamical systems \cite{BennerGugercinWillcox2015,Zimmermann_MORHB2021}.


\bibliographystyle{siamplain}
\bibliography{symplbib}

\end{document}